\numberwithin{equation}{section}
\newtheorem{thm}{Theorem}[section]
\newtheorem{lem}[thm]{Lemma}
\newtheorem{prop}[thm]{Proposition}
\theoremstyle{definition}
\newtheorem{defn}[thm]{Definition}
\newtheorem{assumption}[thm]{Assumption}
\theoremstyle{remark}
\newtheorem{rem}[thm]{\bf{Remark}}
\newcommand\aint{-\hspace{-0.38cm}\int}
\newcommand\bC{\mathbb{C}}
\newcommand\bE{\mathbb{E}}
\newcommand\bH{\mathbb{H}}
\newcommand\bL{\mathbb{L}}
\newcommand\bM{\mathbb{M}}
\newcommand\bN{\mathbb{N}}
\newcommand\bP{\mathbb{P}}
\newcommand\bR{\mathbb{R}}
\newcommand\cF{\mathcal{F}}
\newcommand\cM{\mathcal{M}}
\newcommand\cP{\mathcal{P}}
\newcommand\cS{\mathcal{S}}
\newcommand\cT{\mathcal{T}}
\newcommand\Ccinf{C^{\infty}_{c}}
\newcommand{\mysection}[1]{\section{#1}}
\begin{document}

\title[SPDE with space-time non-local operators]
{A Sobolev space theory for the Stochastic Partial Differential Equations with space-time non-local operators}

\author{Kyeong-Hun Kim}
\address{Department of Mathematics, Korea University, 145 Anam-ro, Seongbuk-gu, Seoul,
02841, Republic of Korea}
\email{kyeonghun@korea.ac.kr}
\thanks{The authors were supported by the National Research Foundation of Korea(NRF) grant funded by the Korea government(MSIT) (No. NRF-2019R1A5A1028324)}

\author{Daehan Park}
\address{Stochastic Analysis and Application Research Center, Korea Advanced Institute of Science and Technology, 291 Daehak-ro, Yuseong-gu, Daejeon, 34141, Republic of Korea} \email{daehanpark@kaist.ac.kr}

\author{Junhee Ryu}
\address{Department of Mathematics, Korea University, 145 Anam-ro, Seongbuk-gu, Seoul,
02841, Republic of Korea} \email{junhryu@korea.ac.kr}

\keywords{Stochastic partial differential equations, Sobolev space theory, Space-time non-local operators, Maximal $L_p$-regularity, Space-time white noise}

\subjclass[2020]{60H15, 35R60, 26A33, 47G20}

\maketitle

\begin{abstract}
We deal with the Sobolev space theory for the  stochastic partial differential equation (SPDE) driven by  Wiener processes
$$
\partial_{t}^{\alpha}u=\left( \phi(\Delta) u +f(u) \right) + \partial_t^\beta \sum_{k=1}^\infty \int_0^t g^k(u)\,dw_s^k, \quad t>0, \,x\in \bR^d
$$
as well as the SPDE driven by space-time white noise
$$
\partial^{\alpha}_{t}u=\phi(\Delta)u + f(u) + \partial^{\beta-1}_{t}h(u) \dot{W}, \quad t>0,\, x\in \bR^d.
$$ 
 Here, $\alpha\in (0,1), \beta< \alpha+1/2$, $\{w_t^k : k=1,2,\cdots\}$ is a family of  independent one-dimensional Wiener processes, and   $\dot{W}$ is a  space-time white noise defined on $[0,\infty)\times \bR^d$.
The time non-local operator $\partial_{t}^{\alpha}$ denotes the Caputo fractional derivative of order $\alpha$, the function $\phi$ is a Bernstein function, and the spatial non-local operator $\phi(\Delta)$ is the  integro-differential operator whose symbol is $-\phi(|\xi|^2)$. In other words, $\phi(\Delta)$ is the infinitesimal generator of the $d$-dimensional subordinate Brownian motion.

 We prove the uniqueness and  existence results  in Sobolev spaces, and obtain the maximal regularity results of solutions. 
\end{abstract}

\mysection{Introduction}

We study the stochastic partial differential equations with both time and spatial  non-local operators. The time and spatial non-local operators we adopt in this article are  $\partial^{\alpha}_t$ and $\phi(\Delta)$, respectively. The Caputo fractional derivative $\partial^{\alpha}_t$ is used in the time fractional heat equation 
 to describe e.g. the anomalous diffusion exhibiting subdiffusive behavior caused by particle sticking and trapping effects (e.g. \cite{metzler1999,metzler2000}), and the spatial non-local operator $\phi(\Delta)$ is the infinitesimal generator of the subordinate Brownian motion. The operator  $\phi(\Delta)$ describes long range jumps of particles, diffusion on fractal structures, and long time behavior of particles moving in space with quenched and disordered force field (e.g. \cite{bouchaud1990,fogedby1994}). For instance, if $\phi(\lambda)=\lambda^{\delta/2}$, then $\phi(\Delta)=\Delta^{\delta/2}$ becomes the fractional Laplacian, which is related to the isotropic $\delta$-stable process.  In this article we use both  $\partial^{\alpha}_t$ and $\phi(\Delta)$ for the description of the combined phenomena, for instance, jump diffusions with a higher peak and heavier tails (e.g. \cite{chen2012,compte1996,gorenflo1999,meerschaert2002}).

The goal of this article is to present an $L_p$-theory ($p\geq 2$) for the SPDE
driven by Wiener processes 
\begin{equation}\label{eqn 4.27.1}
\partial_{t}^{\alpha}u=\left( \phi(\Delta) u +f(u) \right) + \partial_t^\beta \sum_{k=1}^\infty \int_0^t g^k(u)\,dw_s^k, \quad t>0, x\in \bR^d; \,\,\, u(0,\cdot)=0
\end{equation}
as well as for the  SPDE driven by multi-dimensional space-time white noise
\begin{equation}
\label{eqn 4.27.2}
\partial^{\alpha}_{t}u=\phi(\Delta)u + f(u) + \partial^{\beta-1}_{t}h(u) \dot{W}, \quad t>0,x\in \bR^d; \quad u(0,\cdot)=0.
\end{equation}
As mentioned above,   $\{w_t^1, w^2_t, \cdots\}$ is a family of   independent one-dimensional Wiener processes,   $\dot{W}$ is a  space-time white noise on $[0,\infty)\times \bR^d$,  and $\alpha$ and $\beta$ are constants satisfying  $\alpha\in (0,1)$ and $\beta <\alpha+1/2$, respectively.  The non-linear terms  $f(u),g^k(u)$ and $h(u)$ are functions depending on $(\omega,t,x,u)$.
 Such types of SPDEs can be used e.g. to describe random effects of particles in medium with thermal memory or particles subject to sticking and trapping (see  e.g. \cite{chen2015fractional}).

 We interpret SPDEs \eqref{eqn 4.27.1} and \eqref{eqn 4.27.2} by their integral forms, and the restriction $\beta<\alpha+1/2$ is necessary to make sense of the equations. For instance, the integral form of  \eqref{eqn 4.27.1} is 
 \begin{eqnarray*}
 u(t,x)-u(0,x)&=&\frac{1}{\Gamma(\alpha)}\int^t_0 (t-s)^{\alpha-1}(\phi(\Delta)u(s,x)+f(s,x,u(s,x)))ds   \nonumber 
 \\
 &+&\sum_{k=1}^{\infty}
 \frac{1}{\Gamma(1+\alpha-\beta)}\int^t_0 (t-s)^{\alpha-\beta}g^k(s,x,u(s,x)) dw^k_s,
\end{eqnarray*}
 and  even if $g^k$ is bounded, say $g^k\equiv 1$, the condition $\alpha-\beta>-1/2$ is needed to make sense of the  integral $\int^t_0 (t-s)^{\alpha-\beta}dw^k_s$.

In this article, under appropriate continuity of  $f,g$,  we prove the unique solvability   of equation \eqref{eqn 4.27.1} together with the maximal regularity
\begin{eqnarray}
&&\mathbb{E}\| \phi(\Delta)^{(\gamma+2)/2}u \|^{p}_{L_{p}([0,T];L_{p})} \label{eqn 4.29.5} \\
&& \leq C\, \mathbb{E}\left( \| \phi(\Delta)^{\gamma/2}f(0)\|^{p}_{L_{p}([0,T];L_{p})} +\| |\phi(\Delta)^{(\gamma+c_0)/2} g(0)|_{l_2}\|^{p}_{L_{p}([0,T];L_{p})} \right), \nonumber
\end{eqnarray}
for any $p\geq 2$ and $\gamma\in \bR$. Here $c_{0}:=(2\beta-1)^+/{\alpha}<2$.  Also, we use estimate \eqref{eqn 4.29.5}  for $\gamma<0$  to deal with  equation \eqref{eqn 4.27.2}, that is, the SPDE driven by  space-time white noise. This is possible since one can transform equation \eqref{eqn 4.27.2} into the one of type    \eqref{eqn 4.27.1}.

Now let us provide a description on the related works and their approaches. 
The $L_p$-theory ($p\geq 2$) of   the classical stochastic heat equation of the type
\begin{equation*}
\label{sto heat}
du=\Delta u\, dt + g dw_t, \quad t>0, \,x\in \bR^d; \quad u(0,\cdot)=0
\end{equation*}
 was first introduced by N.V. Krylov \cite{kry99analytic,Krylov 1996}.   Krylov introduced  an analytic approach and proved the maximal regularity estimate
\begin{equation}
 \label{krylov lp}
\bE\|\nabla u\|^p_{L_p((0,T); L_p)}\leq C\bE \|g\|^p_{L_p((0,T); L_p)}, \qquad p\geq 2.
\end{equation}
The essence of Krylov's approach is to control  the sharp function of derivatives of $u$ in terms of the free term, that is 
\begin{equation}
\label{eqn 4.30.11}
(\nabla u)^{\sharp}(t,x)\leq C \left(\bM |g|^2(t,x)\right)^{1/2}, \quad \forall \, (t,x) \quad \text{uniformly on}\, \Omega,
\end{equation}
where $\sharp$ and $\bM$ are used to denote the sharp and maximal functions respectively (see Section \ref{Analytic}). This with the $L_p$-norm equivalent relation between functions and their sharp and maximal functions leads to \eqref{krylov lp}.  
Since the work of \cite{kry99analytic,Krylov 1996}, the analytic approach has been further used for SPDEs having different spatial operators.  The fractional Laplacian $\Delta^{\delta/2}$ is considered in  \cite{CL,Ildoo},  fractional Laplacian-like operator is considered in  \cite{MP}, and the operator $\phi(\Delta)$ is considered in \cite{kim2013parabolic}. It is also used for SPDE having time non-local operator in \cite{kim16timefractionalspde,desch2007stochastic} and \cite{desch2011p}, in which the spatial operators used are $\Delta$ and $\Delta^{\delta/2}$, respectively.  As  for other approaches on Sobolev regularity theory,  the method based  on $H^{\infty}$-calculus is also available in the literature. This approach  was introduced in \cite{Veraar2008,Veraar2012,Veraar},  in which  the maximal regularity of $\sqrt{-A}u$ is obtained  for the stochastic convolution 
$$
u(t):=\int^t_0 e^{(t-s)A} g(s) dW_s.
$$
 Here,  $W$ is a  Brownian motion, and  the operator $-A$ is assumed to admit a bounded $H^{\infty}$-calculus of angle less than $\pi/2$ on $L_p$, where $p\geq 2$.  The result of \cite{Veraar2008,Veraar2012,Veraar} certainly generalizes Krylov's result \cite{kry99analytic,Krylov 1996}  as one can take $A=\Delta$. The method based on $H^{\infty}$-calculus is also used in  \cite{desch2013maximal} for the study of  the mild solution to the integral equation
\begin{equation}\label{eqn 03.02.18:23}
u(t)+\int_{0}^{t}(t-s)^{\alpha-1}Au(s)ds = \int_{0}^{t}(t-s)^{\beta-1}g(s)dW_{s},
\end{equation}
where the generator $A$ is supposed to satisfy the assumption described above.  
We also remark that a  non-linear version of equation \eqref{eqn 03.02.18:23} is studied recently in \cite{roeckner} with $A(U)$ in place of $AU$ in the Hilbert space setting, that is, the Gelfand triple setting.  Also see \cite{chen2015fractional} for a Hilbert space theory of SPDEs having time non-local operator  and the second-order spatial  operators with measurable coefficients.

As is expected, our results for non-linear equations are proved based on those for the corresponding linear equations and certain fixed point argument.
To handle linear equations, we use both analytic approach and the one based on $H^{\infty}$-calculus.  First, speaking of Krylov's analytic approach, we control the sharp functions of solutions and their fractional derivatives in terms of free terms. In other words, we prove a generalization of  \eqref{eqn 4.30.11}. This approach is elementary, however the extension to general equations involves quite non-trivial computations. Moreover, for a technical reason, this approach is carried out under the condition 
 \begin{equation}\label{e}
c \left(\frac{R}{r}\right)^{\delta_0}\leq\frac{\phi(R)}{\phi(r)}, \qquad \forall\, 0<r<R<\infty,
\end{equation}
 where $\delta_0\in (0,1]$ and $c>0$.
Regarding the second approach based on $H^{\infty}$-calculus,  we  check that $\phi(\Delta)$ admits the bounded $H^{\infty}$-calculus on $L_p(\bR^d)$ of angle zero.  The second approach works without condition \eqref{e} but it relies on abstract operator theory.

This article is organized as follows. In Section \ref{section2}, we introduce basic facts on time and spatial non-local operators and   related function spaces. Then, we present our main results, Theorems \ref{thm:main results} and \ref{thm 10.27:15:35}. In Section \ref{2105041510} we obtain a priori estimate for the solutions, and finally in Sections \ref{sec proof} and \ref{section 6} we prove Theorems \ref{thm:main results} and \ref{thm 10.27:15:35}, respectively. 

We finish the introduction with notations used in this article. We use $``:="$ or $``=:"$ to denote a definition. As usual, $\mathbb{R}^{d}$ stands for the $d$-dimensional
Euclidean space of points $x=(x^{1},\ldots,x^{d})$.   We set $B_{r}(x):=\{y\in\mathbb{R}^{d}:|x-y|<r\}$,
and $B_{r}:=B_{r}(0)$.
$\mathbb{N}$ denotes the natural number system and $\bN_0:=\bN\cup\{0\}$. For $i=1,\ldots,d$,
multi-indices $\sigma=(\sigma_{1},\ldots,\sigma_{d})$,
$\sigma_{i}\in\{0,1,2,\ldots\}$, and functions $u(x)$, we set
\[
u_{x^{i}}=\frac{\partial u}{\partial x^{i}}=D_{i}u,\quad D_{x}^{\sigma}u=D_{1}^{\sigma_{1}}\cdots D_{d}^{\sigma_{d}}u,\quad|\sigma|=\sigma_{1}+\cdots+\sigma_{d}.
\]
We also use the notation $D_{x}^{m}$ for the set of  partial derivatives of
order $m$ with respect to $x$. For a Banach space $B$, 
by $C_{c}^{\infty}(\mathbb{R}^{d};B)$
we denote the collection of $B$-valued smooth functions having compact support
in $\mathbb{R}^{d}$. We drop $B$ if $B=\bR^d$.
$\cS(\bR^d)$ denotes the Schwartz class on $\bR^d$. By $C^{2}_{b}(\bR^{d})$, we denote the space of twice continuously differentiable functions on $\bR^d$ with bounded derivatives.
For $p> 1$, we use $L_{p}$ to denote the set
of complex-valued Lebesgue measurable functions $u$ on $\bR^{d}$ satisfying
\[
\left\Vert u\right\Vert _{L_{p}}:=\left(\int_{\bR^{d}}|u(x)|^{p}dx\right)^{1/p}<\infty.
\]
Generally, for a given measure space $(X,\mathcal{M},\mu)$, $L_{p}(X,\cM,\mu;B)$
denotes the space of all $B$-valued $\mathcal{M}^{\mu}$-measurable functions
$u$ so that
\[
\left\Vert u\right\Vert _{L_{p}(X,\cM,\mu;B)}:=\left(\int_{X}\left\Vert u(x)\right\Vert _{B}^{p}\mu(dx)\right)^{1/p}<\infty,
\]
where $\mathcal{M}^{\mu}$ denotes the completion of $\cM$ with respect to the measure $\mu$.
If there is no confusion for the given measure and $\sigma$-algebra, we usually omit the measure and the $\sigma$-algebra. We use the notations
\[
\mathcal{F}\left( f\right) (\xi):=\hat{f}(\xi):=\int_{\mathbb{R}^{d}}e^{-i\xi\cdot x}f(x)dx,\quad\mathcal{F}^{-1}( g) (x):=\frac{1}{(2\pi)^{d}}\int_{\mathbb{R}^{d}}e^{i\xi\cdot x}g(\xi)d\xi
\]
to denote the Fourier and the inverse Fourier transforms respectively. $a \wedge b := \min \{a,b\}$, $a \vee b := \max \{a,b\}$, and  $a^+:=a \vee 0$.
 Also we write $a\sim b$ if there exists a constant $c>1$ independent of $a,b$ such that $c^{-1}a\leq b \leq ca$.
If we write $C=C(a,b,\cdots)$, this means
that the constant $C$ depends only on $a,b,\cdots$. Throughout
the article, for functions depending on $(\omega,t,x)$, the argument
$\omega \in \Omega$ will be usually omitted.

\mysection{Main Results}
\label{section2}

First, we introduce some preliminary facts on the fractional calculus. For $\alpha>0$
and $\varphi\in L_{1}((0,T))$,  we define the Riemann-Liouville fractional integral
of the order $\alpha$ by
$$
I_{t}^{\alpha}\varphi:=\frac{1}{\Gamma(\alpha)}\int_{0}^{t}(t-s)^{\alpha-1}\varphi(s)ds, \quad 0\leq t\leq T.
$$
We also define $I^0\varphi:=\varphi$. Due to Jensen's inequality, for $p\in[1,\infty]$,
\begin{equation}
                      \label{eq:Lp continuity of I}
\left\Vert I_{t}^{\alpha}\varphi\right\Vert _{L_{p}(0,T)}\leq
C(\alpha, p,T)\left\Vert \varphi\right\Vert _{L_{p}(0,T)}.
\end{equation}
One can easily check for any $\alpha,\beta\geq 0$
\begin{equation} \label{eqn 4.15.3}
I^{\alpha}_tI^{\beta}_t \varphi=I^{\alpha+\beta}_t  \varphi.
\end{equation}

Let $\alpha \in [n-1, n)$ for some $n\in \bN$. For a function $\varphi(t)$ which $\left(\frac{d}{dt}\right)^{n-1} I_t^{n-\alpha}  \varphi$ is absolutely continuous on $[0,T]$,
the Riemann-Liouville fractional derivative $D_{t}^{\alpha}$ and the Caputo fractional derivative $\partial_{t}^{\alpha}$ of the order $\alpha$ are defined as
\begin{equation}
                          \label{eqn 4.15}
D_{t}^{\alpha}\varphi:=\left(\frac{d}{dt}\right)^{n}\left(I_{t}^{n-\alpha}\varphi\right),
\end{equation}
and
\begin{align*}
\partial_{t}^{\alpha}\varphi= D_{t}^{\alpha} \left(\varphi(t)-\sum_{k=0}^{n-1}\frac{t^{k}}{k!}\varphi^{(k)}(0)\right).
\end{align*}
In particular, if $\alpha\in (0,1)$, then
$$
D^{\alpha}_t\varphi=\left(I_{t}^{1-\alpha}\varphi\right)'(t), \qquad \partial^{\alpha}_t\varphi (t)=\left(I_{t}^{1-\alpha}(\varphi-\varphi(0))\right)'(t).
$$
Note that $D^{\alpha}_t\varphi=\partial^{\alpha}_t \varphi$ if $\varphi(0)=\varphi^{(1)}(0)=\cdots = \varphi^{(n-1)}(0)=0$. By \eqref{eqn 4.15.3} and \eqref{eqn 4.15}, for any $\alpha,\beta\geq 0$, we have
\begin{align*}
D^{\alpha}_t I_{t}^{\beta} \varphi=\begin{cases}
D_{t}^{\alpha-\beta}\varphi & :\alpha>\beta\\
I_{t}^{\beta-\alpha}\varphi& :\alpha\leq\beta,
\end{cases}
\end{align*}
and
$D^{\alpha}_tD^{\beta}_t=D^{\alpha+\beta}_t$. Also if 
$\varphi(0)=\varphi^{(1)}(0)=\cdots = \varphi^{(n-1)}(0)=0$ 
then
\begin{equation*}
I^{\alpha}_{t}\partial^{\alpha}_{t}u=I^{\alpha}_{t}D^{\alpha}_{t}u=u.
\end{equation*}
Finally we define $I^{-\alpha}_{t}\varphi:=D^{\alpha}_{t}\varphi$ for $\alpha>0$.

Next, we introduce the spatial non-local operator $\phi(\Delta)$, and function spaces related to this operator.
Recall that a fucntion $\phi: \bR_+ \to \bR_+$ 
satisfying  $\phi(0+)=0$
 is called a Bernstein function if there exist a constant $b\geq 0$, called a drift, and a  L\'evy measure $\mu$ (i.e.  $\int_{(0,\infty)} (1\wedge t) \mu(dt)<\infty$) such that
\begin{equation}
  \label{eqn 07.17.16:36}
\phi(\lambda)=b\lambda + \int_{(0,\infty)} (1-e^{-\lambda t})\mu(dt).
\end{equation}
It is known that $\phi(\lambda)$ is a Bernstein function if and only if there is a  subordinator (i.e. one-dimensional nondecreasing L\'evy process)  $S_{t}$ whose Laplace exponent is $\phi(\lambda)$, that is,
\begin{equation}
\label{eqn 4.10.1}
\bE e^{-\lambda S_t}=e^{-t\phi(\lambda)}, \quad \forall\, \lambda>0.
\end{equation}
From \eqref{eqn 07.17.16:36}, we easily have $\phi'(\lambda)>0$ and
$$
(-1)^n \phi^{(n)}(\lambda)\leq 0, \quad \forall \lambda>0, \, n\in \bN.
$$
Actually,   for any $n\geq 1$, we also have (see e.g. \cite{kim2020nonlocal,schilling2012bernstein})
\begin{equation}
  \label{eqn 07.19.14.35}
\lambda^{n}|\phi^{(n)}(\lambda)| \leq C(n) \phi(\lambda).
\end{equation}
Note also that  $\phi^{-1}$, the inverse function of $\phi$, is well defined  since $\phi(0+)=0$, $\phi$ is strictly increasing and  $\phi(+\infty)=\infty$.

For $f\in\cS(\bR^d)$,  we define $\phi(\Delta)f:=-\phi(-\Delta)f$ as 
\begin{equation*}
\phi(\Delta)f(x)=\cF^{-1}(-\phi(|\xi|^{2})\cF(f)(\xi))(x).
\end{equation*}
It turns out (e.g. \cite[Theorem 31.5]{sato1999levy}) that  $\phi(\Delta)$  is an integro-differential operator defined by
\begin{equation}\label{fourier200408}
\phi(\Delta)f(x)=b\Delta f+ \int_{\bR^d} \left(f(x+y)-f(x)-\nabla f(x)\cdot y \mathbf{1}_{|y|\leq 1}\right) J(y) dy
\end{equation}
where $J(x)=j(|x|)$ and $j:(0,\infty)\to(0,\infty)$ is given by
$$
j(r)=\int_{(0,\infty)} (4\pi t)^{-d/2} e^{-r^2/(4t)} \mu(dt).
$$
The non-local operator $\phi(\Delta)$ is related to a certain jump process as follows (see \cite{kim2014global,schilling2012bernstein}). Let $W_t$ be a $d$-dimensional Brownian motion independent of $S_t$, and denote $X_t:=W_{S_t}$  ($d$-dimensional subordinate Brownian motion). Then, it  holds that  $\phi(\Delta)$ is the infinitesimal generator of  $X_t$, that is, 
$$
\phi(\Delta)f(x)=\lim_{t \downarrow 0} \frac{\bE f(x+X_t) -f(x)}{t},
$$
where ${\bE}$ denotes the expectation.   Furthermore, the solution to the equation
$$
u_t=\phi(\Delta)u, \quad t>0; \quad u(0,\cdot)=u_0
$$
is given by $u(t,x)=\bE u_0(x+X_t)$.

\vspace{2mm}

Now we introduce Sobolev spaces related to the operator $\phi(\Delta)$.
For $p > 1$ and $\gamma\in\bR$, we denote $H_p^{\phi,\gamma}$ by the closure of $\cS(\bR^d)$ under the norm (cf. \cite{farkas2001function})
\begin{equation*}
\|u\|_{H_p^{\phi,\gamma}}:=\|(1-\phi(\Delta))^{\gamma/2}u\|_{L_p}.
\end{equation*}

Note that if $\phi(\lambda)=\lambda$, then $H^{\phi,\gamma}_{p}$ is the classical Bessel potential space $H^{\gamma}_{p}$.
For any $u\in H_p^{\phi,\gamma}$ and $\varphi \in \cS(\bR^d)$, by $(u,\varphi)$ we denote the value of linear functional $u$ at $\varphi$, that is,
\begin{equation*}
(u,\varphi):=\left((1-\phi(\Delta))^{\gamma/2}u,\, (1-\phi(\Delta))^{-\gamma/2}\varphi \right)_{L_2(\bR^d)}.
\end{equation*}
For any $\gamma, \nu\in \bR$ and $u\in  H_p^{\phi,\gamma}$,   we have $(1-\phi(\Delta))^{\nu/2}u\in H_p^{\phi,\gamma-\nu}$, and furthermore
\begin{equation}
\label{eqn 4.7.5.1}
((1-\phi(\Delta))^{\nu/2}u, \phi)=(u, (1-\phi(\Delta))^{\nu/2}\varphi), \quad \forall \, \varphi\in \cS(\bR^d).
\end{equation}
Let $l_2$ denote the set of all sequences $a=(a^1,a^2,\cdots)$ such that
$$|a|_{l_{2}}:=\left(\sum_{k=1}^{\infty}|a^{k}|^{2}\right)^{1/2}<\infty.
$$
By $H_{p}^{\phi,\gamma}(l_{2})=H_{p}^{\phi,\gamma}(\bR^d; l_2)$  we denote the class of all $l_2$-valued tempered distributions $v=(v^1,v^2,\cdots)$ on $\mathbb{R}^{d}$ such that
$$
\|v\|_{H_{p}^{\phi,\gamma}(l_{2})}:=\||(1-\phi(\Delta))^{\gamma/2}v|_{l_2}\|_{L_p}<\infty.
$$

The following lemma gives basic properties of $H^{\phi,\gamma}_{p}$ and $H_p^{\phi,\gamma}(l_2)$.

\begin{lem} \label{H_p^phi,gamma space}
(i) For any $p>1$, $\gamma\in\bR$, $H_p^{\phi,\gamma}$ is a Banach space.

(ii) For any $p>1$ and  $\mu,\gamma\in\bR$, the map $(1-\phi(\Delta))^{\mu/2}$ is an isometry from $H_p^{\phi,\gamma}$ to $H_p^{\phi,\gamma-\mu}$. 

(iii) If $p>1$ and $\gamma_1\leq\gamma_2$, then $H_p^{\phi,\gamma_2}\subset H_p^{\phi,\gamma_1}$, and there is a constant $C>0$ independent of $u$ such that
\begin{equation*}
\|u\|_{H_p^{\phi,\gamma_1}}\leq C \|u\|_{H_p^{\phi,\gamma_2}}.
\end{equation*}

(iv) If $p>1$ and $\gamma\geq0$, then  it holds that
\begin{equation*}
\|u\|_{H_p^{\phi,\gamma}} \sim  \left( \|u\|_{L_p}+\|\phi(\Delta)^{\gamma/2}u\|_{L_p} \right).
\end{equation*}

(v)  The assertions in (i)--(iv) also hold true for the $l_2$-valued function spaces $H_p^{\phi,\gamma}(l_2)$.
\end{lem}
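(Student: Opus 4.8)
The plan is to reduce the whole lemma to a handful of Fourier-multiplier estimates on $L_p(\bR^d)$ — in their $l_2$-valued and Littlewood--Paley (dyadic) incarnations — and then to read off (i)--(v) by symbol calculus. Concretely, the first thing I would establish is that for every $s\in\bR$ the symbol $m_s(\xi):=(1+\phi(|\xi|^2))^s$ — and likewise $\phi(|\xi|^2)^s$, the quotient $(\phi(|\xi|^2)/(1+\phi(|\xi|^2)))^s$, and finite ratios of such — is smooth on $\bR^d\setminus\{0\}$ and satisfies the H\"ormander--Mikhlin bound $|\xi|^{|\sigma|}|D^\sigma_\xi m_s(\xi)|\leq C$ for all $|\sigma|\leq d+1$, so that $m_s(\Delta)$ is bounded on $L_p$ and on $L_p(\bR^d;l_2)$ for $1<p<\infty$. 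To prove this I would differentiate via the chain rule, using \eqref{eqn 07.19.14.35}, $\lambda^n|\phi^{(n)}(\lambda)|\leq C(n)\phi(\lambda)$, each time a derivative falls on $\phi$ in the variable $\lambda=|\xi|^2$, and using the elementary sublinearity of a Bernstein function ($\phi(R)/\phi(r)\leq R/r$ for $R\geq r$, from concavity and $\phi(0+)=0$) to keep $\phi(|\xi|^2)/(1+\phi(|\xi|^2))$ and its reciprocal comparable to $1$; then routine bookkeeping closes the estimate. The second ingredient I would record is that Assumption \ref{ass bernstein} combined with sublinearity gives $\phi(|\xi|^2)\sim\phi(2^{2j})$ whenever $2^{j-1}\leq|\xi|\leq 2^{j+1}$, uniformly in $j$; after rescaling this turns the localized block multiplier $(1+\phi(|\cdot|^2))^s\phi(2^{2j})^{-s}$ into a Mikhlin multiplier with $j$-independent constants, hence $\phi(2^{2j})^{sp/2}\|\Psi_j\ast(1-\phi(\Delta))^s u\|_{L_p}^p\sim\phi(2^{2j})^{sp/2}\|\widetilde\Psi_j\ast u\|_{L_p}^p$ for a suitably fattened $\widetilde\Psi_j$. (Both ingredients are of the type carried out in \cite{kim2013parabolic,kim2020nonlocal}.)

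Granting these, the rest is short. For (i) and the $H^{\phi,\gamma}_p$-part of (ii): $(1+\phi(|\xi|^2))^{\mu/2}$ is smooth, nowhere zero, and of polynomial growth together with all its derivatives (again by sublinearity and \eqref{eqn 07.19.14.35}), so $(1-\phi(\Delta))^{\mu/2}$ is a bijection of $\cS'(\bR^d)$ onto itself; since the symbols multiply, it is an isometry from $H^{\phi,\gamma}_p$ onto $H^{\phi,\gamma-\mu}_p$, and the choice $\mu=\gamma$ identifies $H^{\phi,\gamma}_p$ isometrically with $L_p$, giving completeness. The space $B^{\phi,\gamma}_p$ is complete because it is defined as a completion; I would only verify that $\|\cdot\|_{B^{\phi,\gamma}_p}$ is genuinely a norm on $\cS(\bR^d)$ (vanishing norm forces $\hat u$ supported at the origin, hence $u=0$), and the isomorphism claim of (ii) for $B^{\phi,\gamma}_p$ is precisely the block estimate above, extended from $\cS$ to the completions.

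For (iii) with $\gamma_1\leq\gamma_2$: the operator $(1-\phi(\Delta))^{(\gamma_1-\gamma_2)/2}$ has bounded Mikhlin symbol $(1+\phi(|\xi|^2))^{(\gamma_1-\gamma_2)/2}\leq 1$, hence is bounded on $L_p$, which gives the $H$-inclusion; for the Besov spaces one uses that $j\geq 1$ forces $\phi(2^{2j})\geq\phi(4)>0$, so $\phi(2^{2j})^{(\gamma_1-\gamma_2)/2}$ is bounded by a constant depending on $\phi(4)$ and the $B^{\phi,\gamma_1}_p$-norm is dominated termwise by the $B^{\phi,\gamma_2}_p$-norm. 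For (iv) with $\gamma\geq 0$: the elementary inequality $(1+r)^{\gamma/2}\leq 2^{\gamma/2}(1+r^{\gamma/2})$ suggests the factorization $(1-\phi(\Delta))^{\gamma/2}=m(\Delta)(I+\phi(\Delta)^{\gamma/2})$ with $m(\xi)=(1+\phi(|\xi|^2))^{\gamma/2}(1+\phi(|\xi|^2)^{\gamma/2})^{-1}$ a bounded Mikhlin symbol, which yields $\|u\|_{H^{\phi,\gamma}_p}\leq C(\|u\|_{L_p}+\|\phi(\Delta)^{\gamma/2}u\|_{L_p})$; the reverse inequality follows from $L_p$-boundedness of the Mikhlin multipliers $(1-\phi(\Delta))^{-\gamma/2}$ and $\phi(\Delta)^{\gamma/2}(1-\phi(\Delta))^{-\gamma/2}$. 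Finally, for (v) I would note that every operator used above is a scalar Fourier multiplier, scalar Mikhlin multipliers act boundedly on $L_p(\bR^d;l_2)$, and the Littlewood--Paley characterization does not see the $l_2$ target, so (i)--(iv) transfer verbatim.

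The hard part is establishing the two multiplier ingredients of the first paragraph — the H\"ormander--Mikhlin derivative bounds for composite symbols built from $\phi(|\xi|^2)$, and the scale-invariant comparison $\phi(|\xi|^2)\sim\phi(2^{2j})$ on dyadic annuli — since that is where \eqref{eqn 07.19.14.35} and Assumption \ref{ass bernstein} genuinely enter; everything after that is symbol calculus and standard Besov/Littlewood--Paley bookkeeping.
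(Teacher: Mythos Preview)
Your proposal is correct and is essentially the approach the paper has in mind: the paper does not write the proof out but refers to \cite[Lemma 6.1]{kim2013parabolic} and \cite[Remark 2.4]{kim2020nonlocal}, remarking explicitly that the argument ``is based only on \eqref{eqn 07.19.14.35} and the Fourier multiplier theorem,'' and that (v) follows from an $l_2$-valued Fourier multiplier theorem --- exactly the two ingredients you isolate and propose to verify. Your sketch of (i)--(iv) via symbol calculus on composite symbols built from $\phi(|\xi|^2)$, with the derivative control coming from \eqref{eqn 07.19.14.35} and the dyadic comparison $\phi(|\xi|^2)\sim\phi(2^{2j})$ for the Besov blocks, is precisely how those cited results are proved.
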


\begin{proof}
First, $(i)$ and $(ii)$ easily follow from the definition of $H_p^{\phi,\gamma}$. For $(iii)$ and $(iv)$, see Theorems 2.3.1 and 2.2.7 in \cite{farkas2001function}, respectively. Here we remark that the proofs are based on Lemma 1.5.6, Theorems 1.5.10 and 2.2.10 in \cite{farkas2001function}, which can be proved for $l_2$-valued spaces. 
\end{proof}

\begin{rem} \label{Hvaluedconti}

(i) Following  \cite[Remark 3]{mikulevivcius2017p},  one can show that the embeddings $H_p^{2n} \subset H_p^{\phi,2n}$ and $H_p^{2n}(l_2) \subset H_p^{\phi,2n}(l_2)$ are continuous for any $n\in\bN$. Therefore, using this and the fact that $\Ccinf(\bR^{n})$ is dense in $H_p^{\gamma'}$ for any $\gamma'\in\bR$, we deduce that $\Ccinf(\bR^{d})$ is dense in $H^{\phi,\gamma}_{p}$ for all $\gamma$.

(ii) Let $\nu\in\bR$ and  $\varphi\in\cS(\bR^d)$.  Then for any multi-index $\sigma$, $D^\sigma \varphi\in H_p^{\phi,\nu}$ by (i). Therefore,   $(1-\phi(\Delta))^{\nu/2} D^{\sigma}\varphi  \in L_p$, and this implies $(1-\phi(\Delta))^{\nu/2}\varphi \in H_p^{2n}$ for any $n\in\bN$. 
\end{rem}

 Let $(\Omega,\mathscr{F},\bP)$ be a complete probability
space and $\{\mathscr{F}_{t},t\geq0\}$  an increasing filtration of
$\sigma$-fields $\mathscr{F}_{t}\subset\mathscr{F}$, each of which
contains all $(\mathscr{F},\bP)$-null sets. We assume that
a  family of independent one-dimensional
Wiener processes $\{w_{t}^{k}\}_{k\in\mathbb{N}}$ relative to the
filtration $\{\mathscr{F}_{t},t\geq0\}$ is given on $\Omega$.
By $\cP$, we denote the predictable $\sigma$-field generated by $\mathscr{F}_{t}$, i.e.
$\cP$ is the smallest $\sigma$-field containing every set $A \times (s,t]$, where $s<t$ and $A \in \mathscr{F}_s$.

Now we define stochastic Banach spaces for $p\geq 2$:
$$
\mathbb{H}_{p}^{\phi,\gamma}(T):=L_{p}\left(\Omega\times[0,T],\mathcal{P};H_{p}^{\phi,\gamma}\right),\quad\mathbb{L}_{p}(T)=\mathbb{H}_{p}^{\phi,0}(T),
$$
$$
\mathbb{H}_{p}^{\phi,\gamma}(T,l_{2}):=L_{p}\left(\Omega\times[0,T],\mathcal{P};H_{p}^{\phi,\gamma}(l_{2})\right),\quad\mathbb{L}_{p}(T,l_{2})=\mathbb{H}_{p}^{\phi,0}(T,l_{2}).
$$
We write $g=(g^1,g^2,\cdots)\in \mathbb{H}_{0}^{\infty}(T, l_2)$ if $g^k=0$ for all sufficiently large $k$, and each $g^k$ is of the type
$$
g^k(t,x)=\sum_{i=1}^{n(k)}1_{(\tau^k_{i-1},\tau^k_{i}]}(t)g^{ik}(x), \quad g^{ik}\in  C_c^\infty(\bR^d),
$$
where $0\leq \tau^k_0 \leq \tau^k_1 \leq \cdots \leq \tau^k_{n(k)}$ are bounded  stopping times. One can check that the space
 $\mathbb{H}_{0}^{\infty}(T,l_{2})$ is dense in $\mathbb{H}^{\phi,\gamma}_{p}(T,l_{2})$  (see e.g. \cite[Theorem 3.10]{kry99analytic}).

\begin{rem}\label{rmk 09.01.15:42}

Let $p\geq 2$, and $q$ denote the conjugate of $p$. Then, 
by Minkowski and H\"older inequalities, for any $g\in \bH^{\phi,\gamma}_p(T,l_2)$ and $\varphi\in H^{\phi,-\gamma}_q$, 
\begin{eqnarray}                
&&\bE \left[\sum_{k=1}^{\infty} \int^T_0 (g^k(s,\cdot),\varphi)^2 ds \right]  \nonumber\\
&&=
\bE \left[\sum_{k=1}^{\infty} \int^T_0 \Big( (1-\phi(\Delta)^{\gamma/2})g^k(s,\cdot),(1-\phi(\Delta)^{-\gamma/2})\varphi \Big)^2_{L_2} ds \right] \nonumber\\
&&\leq \bE\int^T_0 \||(1-\phi(\Delta))^{\gamma/2}g|_{l_2}\|^2_{L_p} \|(1-\phi(\Delta))^{-\gamma/2}\varphi\|^2_{L_q}\,dt \nonumber\\
&&\leq C(T) \|\varphi\|^2_{H^{-\gamma,\phi}_q} \|g\|^2_{\bH^{\phi,\gamma}_p(T,l_2)}. \label{eqn 4.7.1}
\end{eqnarray}
Consequently,
$(g, \varphi)\in L_2(\Omega\times [0,T], \cP; l_2)$, and it also  follows  that the sequence of stochastic integral $\sum_{i=1}^n \int^t_0 (g^k(s,\cdot), \varphi) dw^k_s$ converges in probability uniformly on $[0,T]$, and consequently  the infinite series
$\sum_{i=1}^{\infty}\int^t_0 (g^k(s,\cdot), \varphi)dw^k_s$ becomes a continuous $L_2$-martingale on $ [0,T]$.  
\end{rem}

\vspace{2mm}

The following lemma will be used later for  certain  approximation arguments. 
\begin{lem}
\label{fracint of dw}
\noindent
(i) Let $\nu \geq 0$ and  $h\in L_{2}(\Omega\times[0,T],\mathcal{P};l_{2})$.
Then the equality
\begin{equation*}
I^{\nu}_t \left(\sum_{k=1}^{\infty}\int_{0}^{\cdot}h^{k}(s)dw_{s}^{k}\right)(t)=\sum_{k=1}^{\infty}\left(I^{\nu}_t \int_{0}^{\cdot}h^{k}(s)dw_{s}^{k}\right)(t)\label{eq:equality s int}
\end{equation*}
holds for all $t\leq T$ $(a.s.)$ and also in $L_{2}(\Omega\times[0,T])$.

\noindent
(ii) Suppose $\nu \geq 0$ and $h_{n} \to h$ in $L_{2}(\Omega\times[0,T],\mathcal{P} ; l_{2})$
as $n\rightarrow\infty$. Then
$$
\sum_{k=1}^{\infty}\left(I^{\nu}_t \int_{0}^{\cdot}h_{n}^{k}dw_{s}^{k}\right)(t) \longrightarrow\sum_{k=1}^{\infty}\left(I^{\nu }_t \int_{0}^{\cdot}h^{k}dw_{s}^{k}\right)(t)
$$
in probability uniformly on $[0,T]$.

\noindent
(iii)  If $\nu<1/2$ and $h\in L_{2}(\Omega\times[0,T],\mathcal{P} ; l_{2})$, then
\begin{align*}
\partial^{\nu}_t \left(\sum_{k=1}^{\infty}\int_{0}^{\cdot}h^{k}(s)dw_{s}^{k}\right)(t)
&=\frac{1}{\Gamma(1-\nu)}\sum_{k=1}^{\infty}\int_{0}^{t}(t-s)^{-\nu}h^{k}(s)dw_{s}^{k}
\end{align*}
$(a.e.)$ on $\Omega \times [0,T]$.

\noindent
(iv)  Let $0< \nu<1/2$ and $h_n \to h$ in $L_{2}(\Omega\times[0,T],\mathcal{P} ; l_{2})$ as $n \to \infty$.  Then there exists a subsequence $n_j$ such that 
\begin{align*}
\partial^{\nu}_t \left(\sum_{k=1}^{\infty}\int_{0}^{\cdot}h^{k}_{n_j}(s)dw_{s}^{k}\right)(t)   \longrightarrow
 \partial^{\nu}_t \left(\sum_{k=1}^{\infty}\int_{0}^{\cdot}h^{k}(s)dw_{s}^{k}\right)(t) 
\end{align*}
$(a.e.)$ on $\Omega \times [0,T]$.\end{lem}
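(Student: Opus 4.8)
The plan is to reduce everything to a single fixed $k$ together with a uniform-in-$n$ estimate, and then to handle the series by the $L_2$-martingale structure established in Remark \ref{rmk 09.01.15:42}. For part (i), fix $k$ and set $M^k_t:=\int_0^t h^k(s)\,dw^k_s$. Writing out $I^\nu_t$ explicitly and applying the stochastic Fubini theorem (which is licit since $\bE\int_0^T\int_0^t (t-r)^{2(\nu-1)}\mathbf 1_{\nu>0}\,|h^k(r)|^2\,dr\,dt<\infty$ — here one uses $\nu\ge 0$ only to guarantee integrability of $(t-r)^{\nu-1}$ in the relevant range, with the case $\nu=0$ being trivial), one gets $I^\nu_t M^k = \int_0^t \big(\frac{1}{\Gamma(\nu)}\int_r^t (t-s)^{\nu-1}\,ds\big)\,h^k(r)\,dw^k_r$ for $\nu>0$, i.e. the claimed pathwise identity for each $k$. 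Summing over $k$ and using that $I^\nu_t$ is a bounded operator on $L_2(0,T)$ by \eqref{eq:Lp continuity of I}, together with the $L_2(\Omega)$-convergence of the partial sums $\sum_{k\le n} M^k_t$ for each $t$ (Remark \ref{rmk 09.01.15:42}), lets one pass to the limit inside $I^\nu_t$; one should note that $I^\nu_t$ applied to the limiting process is well defined because $\sum_k M^k\in L_2(\Omega\times[0,T])$. The identity then holds in $L_2(\Omega\times[0,T])$ and, after choosing a good version, pathwise a.s.

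For part (ii), since $h_n\to h$ in $L_2(\Omega\times[0,T];l_2)$, Burkholder–Davis–Gundy gives $\bE\sup_{t\le T}\big|\sum_k\int_0^t(h_n^k-h^k)\,dw^k_s\big|^2\le C\,\bE\int_0^T|h_n-h|_{l_2}^2\,ds\to 0$, so the martingales converge in $L_2$ uniformly on $[0,T]$; applying the bounded operator $I^\nu_t$ on $C([0,T])$ (or on $L_\infty(0,T)$ — note $\sup_t|I^\nu_t\varphi|\le C(\nu,T)\sup_t|\varphi(t)|$ for $\nu\ge0$) preserves uniform convergence, and by part (i) this is exactly the asserted convergence of $\sum_k I^\nu_t\int_0^\cdot h_n^k\,dw^k_s$; "uniformly in probability" follows from $L_2$-uniform convergence.

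For part (iii), recall $\partial^\nu_t = D^\nu_t$ on processes vanishing at $0$, and for $0<\nu<1/2$, $D^\nu_t\psi = \frac{d}{dt}I^{1-\nu}_t\psi$. Apply part (i) with exponent $1-\nu\in(1/2,1)\subset[0,\infty)$ to get $I^{1-\nu}_t(\sum_k M^k) = \frac{1}{\Gamma(1-\nu)}\sum_k\int_0^t\big(\int_r^t(t-s)^{-\nu}\,ds\big)h^k(r)\,dw^k_r = \frac{1}{\Gamma(2-\nu)}\sum_k\int_0^t(t-r)^{1-\nu}h^k(r)\,dw^k_r$; differentiating in $t$ (justified for a.e.\ $(\omega,t)$ by the Lebesgue differentiation theorem applied to the time integral, after another stochastic Fubini to write the $t$-derivative under the stochastic integral as $(1-\nu)(t-r)^{-\nu}$, with the $\nu<1/2$ hypothesis ensuring $\bE\int_0^T\int_0^t(t-r)^{-2\nu}|h^k(r)|^2\,dr\,dt<\infty$) yields $\frac{1}{\Gamma(1-\nu)}\sum_k\int_0^t(t-r)^{-\nu}h^k(r)\,dw^k_r$, as claimed; the $\nu=0$ case is the trivial identity. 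Finally, part (iv) follows from part (iii) by the same BDG argument as in (ii): $\bE\sup_{t\le T}\big|\sum_k\int_0^t(t-r)^{-\nu}(h_{n}^k-h^k)(r)\,dw^k_r\big|^2$ need not be controlled uniformly in $t$ directly, so instead one estimates $\bE\int_0^T\big|\sum_k\int_0^t(t-r)^{-\nu}(h_n^k-h^k)\,dw^k_r\big|^2\,dt \le C(\nu,T)\,\bE\int_0^T|h_n-h|_{l_2}^2\,dr\to 0$ via BDG under the $dt$-integral and $\int_0^t(t-r)^{-2\nu}\,dr\le C(\nu,T)$; $L_2(\Omega\times[0,T])$-convergence then yields a.e.\ convergence along a subsequence $n_j$.

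The main obstacle is the rigorous justification of the stochastic Fubini interchanges and, in part (iii), differentiating the stochastic convolution in $t$: one must verify the relevant second-moment integrability conditions (which is precisely where $\nu\ge0$ enters in (i)--(ii) and $\nu<1/2$ enters in (iii)--(iv)), and one must be careful that the "$\frac{d}{dt}$" in the definition of $D^\nu_t$ is taken in the a.e.\ sense and commutes with the (already-justified) interchange of sum and integral. Everything else is routine once the single-$k$ statements are in hand.
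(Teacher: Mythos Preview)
Your argument for part (iv) is exactly the paper's: apply Burkholder--Davis--Gundy under the $dt$-integral, use $\int_0^t(t-r)^{-2\nu}\,dr\le C(\nu,T)$ to obtain $L_2(\Omega\times[0,T])$-convergence, then extract an a.e.\ convergent subsequence. For parts (i)--(iii) the paper simply cites \cite[Lemma 3.1, Lemma 3.3]{chen2015fractional}, so you are supplying more than the paper does; your overall strategy (stochastic Fubini for a fixed $k$, then pass to the series via $L_2$-boundedness of $I^\nu_t$ and BDG) is the standard one and is what those references carry out.

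One genuine correction, however: the stochastic Fubini hypothesis you write down in (i), namely $\bE\int_0^T\int_0^t (t-r)^{2(\nu-1)}|h^k(r)|^2\,dr\,dt<\infty$, is \emph{false} for $0<\nu\le 1/2$, since $\int_0^t(t-r)^{2(\nu-1)}\,dr$ diverges there. The interchange is nonetheless legitimate, but the correct sufficient condition (e.g.\ the Da Prato--Zabczyk / Veraar form) is
\[
\int_0^t (t-s)^{\nu-1}\Bigl(\int_0^s |h^k(r)|^2\,dr\Bigr)^{1/2}ds<\infty \quad\text{a.s.},
\]
which holds for every $\nu>0$ because $(t-s)^{\nu-1}\in L_1(0,t)$ and the inner factor is bounded by $\|h^k\|_{L_2(0,T)}$. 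The same slip does not affect your part (iii), where the relevant kernel is $(s-r)^{-\nu}$ and the analogous condition does reduce to $\int_0^t(s-r)^{-2\nu}\,dr<\infty$, which is precisely where $\nu<1/2$ enters. So the architecture of your proof is fine; only the Fubini justification in (i) needs to be restated.
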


\begin{proof}
See \cite[Lemma 3.1, Lemma 3.3]{chen2015fractional} for (i)--(iii). We prove (iv).   Put $g_n=h_n-h$. Then, for each $t>0$, by Burkholder-Davis-Gundy inequality,
\begin{eqnarray*}
\bE\left| \sum_{k=1}^{\infty}\int^t_0 (t-s)^{-\nu}g^k_n(s)dw^k_s\right|^2 &\leq &
\bE \sup_{r\leq t} \left| \sum_{k=1}^{\infty}\int^r_0 (t-s)^{-\nu}g^k_n(s)dw^k_s\right|^2\\
&\leq&c \bE\int^t_0 |t-s|^{-2\nu}|g_n(s)|^2_{l_2} ds
\end{eqnarray*}
Therefore, we have the $L_2(\Omega\times [0,T])$ convergence, i.e. 
$$
\bE\int^T_0 \left| \sum_{k=1}^{\infty}\int^t_0 (t-s)^{-\nu}g^k_n(s)dw^k_s\right|^2\,dt \leq c\bE \int_{0}^{T} \int^t_0 |t-s|^{-2\nu}|g_n(s)|^2_{l_2} ds dt \to 0
$$
as $n\to \infty$, and the claim easily follows. 
\end{proof}

\vspace{3mm}

Now we explain our sense of solutions.

\begin{defn}
\label{defn 1} Let $u\in\mathbb{H}^{\phi,\gamma_1}_{p}(T)$, $f\in\mathbb{H}_{p}^{\phi,\gamma_2}(T)$, and $g\in\mathbb{H}_{p}^{\phi,\gamma_3}(T,l_{2})$ for some $\gamma_i\in \bR$, $i=1,2,3$.  Then we  say $u$ satisfies 
\begin{equation} \label{eqn 7.15}
\partial_{t}^{\alpha}u(t,x)=f(t,x)+\partial_{t}^{\beta}\sum_{k=1}^\infty\int_{0}^{t}g^{k}(s,x)dw_{s}^{k}, \quad t\in (0,T]; \quad u(0,\cdot)=0
\end{equation}
 in the sense of distributions if  for any $\varphi\in \cS(\bR^d)$ the equality
\begin{equation} \label{eq:solution space 2}
\left(u(t),\varphi\right)  =I_{t}^{\alpha}\left(f(t),\varphi\right)+\partial_{t}^{\beta-\alpha}\sum_{k=1}^{\infty}\int_{0}^{t}\left(g^{k}(s),\varphi\right)dw_{s}^{k}
 \end{equation}
holds  ``a.e. on $\Omega\times[0,T]$''. Here $\partial^{\beta-\alpha}_t:=I^{\alpha-\beta}_t$ if $\beta\leq\alpha$. 
\end{defn}

\begin{rem} \label{test ftn}

(i)  Due to Remark \ref{rmk 09.01.15:42} and   Lemma \ref{fracint of dw},  the infinite series  in \eqref{eq:solution space 2} makes sense.

(ii) Let \eqref{eqn 7.15} hold with $u,f,g$ as in Definition \ref{defn 1}. Denote 
$\gamma=\min \{\gamma_1,\gamma_2,\gamma_3 \}$.  Then,  \eqref{eqn 4.7.1}, Lemma \ref{fracint of dw} and standard approximation argument show that  \eqref{eq:solution space 2} holds a.e. on $\Omega\times[0,T]$ for any $\varphi\in H^{\phi,-\gamma}_q$, where $q=p/{(p-1)}$.  In particular, it holds a.e. for any $(1-\phi(\Delta))^{\nu/2} \bar{\varphi}$, where $\nu \in \bR$ and $\bar{\varphi}\in \cS(\bR^d)$.

\end{rem}

\begin{rem}
\label{remark 4.7.3} 
 Let $u,f$ and $g$ be given as in Definition   \ref{defn 1}.  Fix $\nu\in \bR$, and denote $\bar{\gamma}_i=\gamma_i-\nu$, $i=1,2,3$. Also denote 
 $\bar{u}=(1-\phi(\Delta))^{\nu/2}u$, and define $\bar{f}$  and $\bar{g}$ in the same way.  Then, by  Lemma \ref{H_p^phi,gamma space},
 $$
 \bar{u}\in\mathbb{H}^{\phi,\bar{\gamma}_1}_{p}(T), \quad \bar{f} \in\mathbb{H}_{p}^{\phi,\bar{\gamma}_2}(T), \quad
 \bar{g} \in\mathbb{H}_{p}^{\phi,\bar{\gamma}_3}(T,l_{2}).
 $$
 Moreover,  due to Remark \ref{test ftn} (ii) and  \eqref{eqn 4.7.5.1}, it follows that  \eqref{eqn 7.15} holds in the sense of distributions with $(\bar{u}, \bar{f}, \bar{g})$,  in place of $(u,f,g)$. 
 
 \end{rem}

In Definition \ref{defn 1},  we only require \eqref{eq:solution space 2} holds a.e. on $\Omega\times [0,T]$, not for all $t\leq T$ (a.s.). Below we give an equivalent statement which will clarify our notion of  solutions.

\begin{prop} \label{equiv sol sense}
Let $u\in \mathbb{H}_p^{\phi,\gamma}$, $f\in\mathbb{H}_{p}^{\phi,\gamma}(T)$, and
 $g\in\mathbb{H}_{p}^{\phi,\gamma}(T,l_{2})$ for some $\gamma\in \bR$.
 Then the following are equivalent.

 (i) $u$ satisfies  \eqref{eqn 7.15} in the sense of Definition \ref{defn 1} with $f$, and $g$.

 (ii) For any  constant $\Lambda$ satisfying
 $$
\Lambda \geq (\alpha \vee \beta) \text{ and } \Lambda > \frac{1}{p},
 $$
 $I_t^{\Lambda-\alpha}u$ has an $H_p^{\phi,\gamma}$-valued continuous version in $\mathbb{H}_p^{\phi,\gamma}(T)$, still denoted by $I_t^{\Lambda-\alpha}(u)$, such that for any $\varphi\in\cS(\bR^d)$,
 \begin{equation}
            \label{eqn 7.15.2}
(I^{\Lambda-\alpha}_t u(t),\varphi) =I_{t}^{\Lambda}\left(f(t,\cdot),\varphi\right)+\sum_{k=1}^{\infty}I_{t}^{\Lambda-\beta}\int_{0}^{t}\left(g^{k}(s,\cdot),\varphi\right)dw_{s}^{k}
 \end{equation}
 holds  ``for all $t\in[0,T]$ (a.s.)''.

(iii) The claim of (ii) holds for some $\Lambda$ satisfying $\Lambda \geq (\alpha \vee \beta) \text{ and } \Lambda > \frac{1}{p}$.
\end{prop}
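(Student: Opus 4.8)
The plan is to prove the cyclic chain of implications (i) $\Rightarrow$ (ii), (ii) $\Rightarrow$ (iii), (iii) $\Rightarrow$ (i); the middle one being trivial, the real content lies in (i) $\Rightarrow$ (ii) and (iii) $\Rightarrow$ (i). The central idea is that applying the Riemann-Liouville integral $I_t^{\Lambda-\alpha}$ to the distributional identity \eqref{eq:solution space 2} should, after using the semigroup property \eqref{eqn 4.15.3} and the identity $I_t^{\Lambda-\alpha}\partial_t^{\beta-\alpha} = I_t^{\Lambda-\alpha}I_t^{\alpha-\beta} = I_t^{\Lambda-\beta}$ (valid since $\Lambda\geq\beta$, using $\partial^{\beta-\alpha}_t=I^{\alpha-\beta}_t$ when $\beta\le\alpha$ and the analogous convention/computation when $\beta>\alpha$ with $\Lambda-\beta\ge0$), turn \eqref{eq:solution space 2} into exactly \eqref{eqn 7.15.2} — but now with all three terms being genuinely continuous in $t$. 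The condition $\Lambda>1/p$ is precisely what guarantees, via \eqref{eq:Lp continuity of I} together with a Hölder estimate on $(t-s)^{\Lambda-\alpha}\mathbf 1_{s<t}$ as an element of $L_{p'}$, that the maps $t\mapsto I_t^{\Lambda-\alpha}(u-u_0)$, $t\mapsto I_t^{\Lambda}f$ land in $C([0,T];H_p^{\phi,\gamma})$ after modification on a null set, and likewise $\Lambda-\beta>1/p-(\text{order gained from the stochastic integral})$ — more precisely the stochastic term $\sum_k I_t^{\Lambda-\beta}\int_0^\cdot(g^k,\varphi)dw^k_s$ is continuous because $\Lambda-\beta > 1/p - 1/2$ is implied by $\Lambda>1/p$ and $\beta<\alpha+1/2<3/2$... actually one should just invoke Lemma \ref{fracint of dw}(i)–(ii) and a Kolmogorov-type continuity argument on the resulting fractionally-integrated martingale.

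For (i) $\Rightarrow$ (ii): First I would fix $\varphi\in\cS(\bR^d)$ and apply $I_t^{\Lambda-\alpha}$ to both sides of \eqref{eq:solution space 2}, which holds for a.e. $(\omega,t)$. On the left we get $I_t^{\Lambda-\alpha}(u(\cdot)-u_0,\varphi)$; I claim this equals $(I_t^{\Lambda-\alpha}(u-u_0),\varphi)$, where inside, $I_t^{\Lambda-\alpha}$ acts on the $H_p^{\phi,\gamma}$-valued function $s\mapsto u(s)-u_0$ — this is a Fubini/linearity statement. On the right, the deterministic term becomes $I_t^{\Lambda-\alpha}I_t^\alpha(f,\varphi)=I_t^\Lambda(f,\varphi)$ by \eqref{eqn 4.15.3}, and the stochastic term becomes $I_t^{\Lambda-\alpha}\partial_t^{\beta-\alpha}\sum_k\int_0^t(g^k,\varphi)dw^k_s$. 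Here I use Lemma \ref{fracint of dw}(i) to move $I_t^{\Lambda-\alpha}$ inside the sum and past the Itô integral, and then $I_t^{\Lambda-\alpha}\partial_t^{\beta-\alpha}=I_t^{\Lambda-\beta}$ on each $\int_0^\cdot(g^k,\varphi)dw^k_s$ — when $\beta\le\alpha$ this is just \eqref{eqn 4.15.3}; when $\alpha<\beta<\alpha+1/2$ one uses $\partial_t^{\beta-\alpha}$ with $\beta-\alpha\in(0,1/2)$ as in Lemma \ref{fracint of dw}(iii), applies $I_t^{\Lambda-\alpha}$, and invokes $I_t^{\Lambda-\alpha}D_t^{\beta-\alpha}=I_t^{\Lambda-\beta}$ (valid because $\Lambda-\alpha\ge\beta-\alpha$, i.e. $\Lambda\ge\beta$). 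This yields the identity \eqref{eqn 7.15.2} in the a.e. sense. Then I show each of the three terms, as a function of $t$ with values in $H_p^{\phi,\gamma}$ (the $\varphi$ being arbitrary, one upgrades to the full Sobolev-valued statement using separability of $\cS$ and density), has a continuous modification: for $I_t^\Lambda f$ use $\Lambda>1/p$ and Hölder; for $I_t^{\Lambda-\alpha}(u-u_0)$ likewise using $u\in\bH_p^{\phi,\gamma}(T)$; for the martingale term use Lemma \ref{fracint of dw}(ii) and BDG to get $L_p$-continuity in $t$ hence a continuous version. Equating the continuous versions on a common full-measure set of times, then by continuity for all $t$, gives (ii).

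For (iii) $\Rightarrow$ (i): run the argument in reverse — given \eqref{eqn 7.15.2} holding for all $t$ with a continuous version, apply the fractional derivative $D_t^{\Lambda-\alpha}$ (equivalently $I_t^{-(\Lambda-\alpha)}$) to both sides. The point is that $D_t^{\Lambda-\alpha}I_t^{\Lambda-\alpha}=\mathrm{id}$ when applied to a function that is itself of the form $I_t^{\Lambda-\alpha}(\cdot)$, and $D_t^{\Lambda-\alpha}I_t^\Lambda=I_t^\alpha$, $D_t^{\Lambda-\alpha}I_t^{\Lambda-\beta}=I_t^{\alpha-\beta}=\partial_t^{\beta-\alpha}$ (with the appropriate convention when $\beta>\alpha$). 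This recovers \eqref{eq:solution space 2} for a.e. $t$. Some care is needed that differentiation can be interchanged with the paired test function and, for the stochastic part, with the infinite sum and Itô integral — again Lemma \ref{fracint of dw} supplies this.

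\textbf{Main obstacle.} The genuinely delicate point is the continuity (and measurability-in-$\omega$, predictability) of the fractionally-integrated stochastic term $\sum_k I_t^{\Lambda-\beta}\int_0^\cdot(g^k(s),\varphi)dw^k_s$ and verifying it has a version continuous in $t$ that can be paired cleanly against $\cS$-test functions while remaining in $\bH_p^{\phi,\gamma}(T)$; one must also carefully track the value of $\Lambda-\beta$ relative to $1/2$ (it can be negative, zero, or positive depending on $\beta$ vs $\alpha$) so that the kernel $(t-s)^{\Lambda-\beta-1}$ is square-integrable near $s=t$, which reduces to $\Lambda-\beta>1/2$; combined with $\Lambda\ge\beta$, $\Lambda\ge\alpha$, $\Lambda>1/p$ and $\beta<\alpha+1/2\le3/2$ one checks this forces no contradiction, but when $\Lambda-\beta\le1/2$ one instead absorbs the $I^{\Lambda-\beta}_t$ as $I^{\Lambda-\alpha}_t$ composed with $\partial^{\beta-\alpha}_t$ and relies on Lemma \ref{fracint of dw}(iii) which already encodes the needed $\nu<1/2$ restriction (and here $\beta-\alpha<1/2$ holds by hypothesis). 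Threading these parameter cases together — distinguishing $\beta\le\alpha$ from $\alpha<\beta<\alpha+1/2$, and within each the sub-cases for $\Lambda$ — is where the bookkeeping concentrates; the rest is routine Fubini, density, and BDG.
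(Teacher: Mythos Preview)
Your approach is essentially a direct from-scratch proof, and the outline is sound: apply $I_t^{\Lambda-\alpha}$ to both sides of \eqref{eq:solution space 2}, use the semigroup identity \eqref{eqn 4.15.3} together with Lemma~\ref{fracint of dw}, and then argue continuity so that the a.e.\ identity upgrades to one holding for all $t$; reverse with $D_t^{\Lambda-\alpha}$ for (iii)$\Rightarrow$(i). This is precisely the content of the external result the paper cites. The paper itself, however, takes a much shorter route: it first invokes Remark~\ref{remark 4.7.3} to reduce to $\gamma=0$ (since $(1-\phi(\Delta))^{\gamma/2}$ is an isometry and commutes with all the time operators and the pairing against $\varphi$), and in that case $H_p^{\phi,0}=L_p$, so the result becomes exactly \cite[Proposition 2.11]{kim2020sobolev}, which is simply cited. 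The only additional remark the paper makes is that (iii) suffices for the implication back to (i), since the proof of (ii)$\Rightarrow$(i) in the reference uses only one fixed $\Lambda$.

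Two small corrections to your write-up. First, you should not try to prove continuity of $I_t^{\Lambda-\alpha}(u-u_0)$ independently ``likewise using $u\in\bH_p^{\phi,\gamma}(T)$'': when $\Lambda=\alpha$ this is just $u-u_0$, which need not be continuous a priori. The correct logic is that the \emph{right-hand side} of \eqref{eqn 7.15.2} is continuous (the deterministic term by $\Lambda>1/p$ and H\"older, the stochastic term because $\Lambda-\beta\ge0$ and the fractional integral of a continuous martingale is continuous via Lemma~\ref{fracint of dw}(i)), and since the a.e.\ identity forces the left side to coincide a.e.\ with this continuous function, the left side \emph{has} a continuous version. Second, your ``main obstacle'' paragraph over-complicates the parameter bookkeeping: the only condition needed on the stochastic side is $\Lambda-\beta\ge0$, which is assumed; the discussion of $\Lambda-\beta>1/2$ versus $\Lambda-\beta\le1/2$ is irrelevant here. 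If you adopt the paper's reduction to $\gamma=0$ first, all of the $H_p^{\phi,\gamma}$-valued continuity statements become ordinary $L_p$-valued ones and the argument streamlines considerably.
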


\begin{proof}
Due to Remark \ref{remark 4.7.3}, it suffices to prove only the case $\gamma=0$. In this case we have $H^{\phi,0}_p=L_p$, and thus the lemma follows from \cite[Proposition 2.13]{kim2020sobolev}.  Actually  \cite[Proposition 2.13]{kim2020sobolev} does not include  statement (iii) above. However, the proof ``(ii) $\to$(i)'' only utilizes the result of (iii).  
\end{proof}

\begin{rem}
If $\alpha=\beta=1$, then we cake take $\Lambda=1$. Then, \eqref{eqn 7.15.2} reads as
$$
(u(t),\varphi) =\int^t_0 (f(s,\cdot),\varphi)ds+\sum_{k=1}^{\infty}\int_{0}^{t}\left(g^{k}(s,\cdot),\varphi\right)dw_{s}^{k}, \quad \forall \, t\leq T\, (a.s.).
$$
  This might look wrong  at first glance because in Definition \ref{defn 1} we only require this equality holds a.e. on $\Omega\times [0,T]$. The point of 
  Proposition \ref{equiv sol sense} is that $u$ has a continuous version, still denoted by $u$, such that this equality holds for all $t\leq T$ (a.s.). 
\end{rem}

\begin{prop}
\label{prof 4.7}
Let assumptions in Proposition \ref{equiv sol sense} hold, and let $u$ satisfy \eqref{eqn 7.15} in the sense of Definition \ref{defn 1} with $f$ and $g$. 

(i)  If  $\Lambda \geq (\alpha \vee \beta) \text{ and } \Lambda > \frac{1}{p}$,   then
\begin{equation*}
\mathbb{E}\sup_{t\leq T}\|I^{\Lambda-\alpha}_{t}(u(t))\|^{p}_{H^{\phi,\gamma}_{p}}
\leq C \left( \|f\|^{p}_{\mathbb{H}^{\phi,\gamma}_{p}(T)}+\|g\|^{p}_{\mathbb{H}^{\phi,\gamma}_{p}(T,l_{2})} \right),
\end{equation*}
where $C=C(\Lambda,\alpha,\beta,T,p)$.

(ii) Let $\theta:=\min\{1,\alpha,2(\alpha-\beta)+1\}$. Then, for any $t\leq T$,
\begin{equation} \label{eq:solution space estimate 1}
\left\Vert u\right\Vert _{\mathbb{H}_{p}^{\phi,\gamma}(t)}^{p}
\leq C\int_{0}^{t}(t-s)^{\theta-1} \left(\|f\|^p_{\mathbb{H}_{p}^{\phi,\gamma}(s)}
+\|g\|^p_{\mathbb{H}_{p}^{\phi,\gamma}(s,l_{2})} \right)ds,
\end{equation}
where  the constant $C$  depends only $\alpha,\beta,d,p,\gamma,\phi,T$.
\end{prop}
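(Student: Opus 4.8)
The plan is to reduce both estimates to the case $\gamma=0$, write the solution in closed form via the fractional calculus, and then estimate by H\"older, the Burkholder--Davis--Gundy (BDG) inequality, Fubini, and two elementary time integration-by-parts identities. First I would apply Remark \ref{remark 4.7.3} with $\nu=\gamma$ to replace $(u,f,g,u_0)$ by $\bar u:=(1-\phi(\Delta))^{\gamma/2}u$, $\bar f:=(1-\phi(\Delta))^{\gamma/2}f$, $\bar g:=(1-\phi(\Delta))^{\gamma/2}g$, $\bar u_0:=(1-\phi(\Delta))^{\gamma/2}u_0$, which satisfy \eqref{eqn 7.15} and lie in $\mathbb{L}_p(T)$, $\mathbb{L}_p(T)$, $\mathbb{L}_p(T,l_2)$; since $(1-\phi(\Delta))^{\gamma/2}$ is an isometry onto $L_p$ (Lemma \ref{H_p^phi,gamma space}) and commutes with $I^{\Lambda-\alpha}_t$, it suffices to prove both estimates for $\gamma=0$ (and, in (ii), $\bar u_0=0$). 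Then, using Proposition \ref{equiv sol sense}, Lemma \ref{fracint of dw}(i) and the stochastic Fubini identity $I^\nu_t\int_0^\cdot h\,dw=\tfrac1{\Gamma(\nu+1)}\int_0^\cdot(\cdot-s)^\nu h\,dw$ already used in \eqref{eqn 4.29.1}, I would show that the continuous $L_p$-valued version $v$ of $I^{\Lambda-\alpha}_t(\bar u-\bar u_0)$ satisfies, for all $t\le T$ (a.s.),
\[
v(t)=\tfrac1{\Gamma(\Lambda)}\int_0^t(t-s)^{\Lambda-1}\bar f(s)\,ds+\bigl(I^{\Lambda-\beta}_tN\bigr)(t),\qquad N_t:=\textstyle\sum_k\int_0^t\bar g^k(s)\,dw^k_s,
\]
with $N$ the $L_p$-valued stochastic integral (note $\Lambda-\alpha,\Lambda-\beta\ge0$ because $\Lambda\ge\alpha\vee\beta$). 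Running the same computation with $\Lambda$ replaced by $\alpha$, and invoking Lemma \ref{fracint of dw}(iii) when $\alpha<\beta<\alpha+\tfrac12$ to express $\partial^{\beta-\alpha}_t$ of a martingale as a singular stochastic integral, gives for (ii)
\[
\bar u(t)=\tfrac1{\Gamma(\alpha)}\int_0^t(t-s)^{\alpha-1}\bar f(s)\,ds+\tfrac1{\Gamma(1+\alpha-\beta)}\textstyle\sum_k\int_0^t(t-s)^{\alpha-\beta}\bar g^k(s)\,dw^k_s=:\tilde u_1(t)+J(t)
\]
a.e. on $\Omega\times[0,T]$, where $J(t)$ is first understood pointwise in $x$ (legitimate since $2(\alpha-\beta)>-1$) and its $L_p$-finiteness is recovered below.

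\textbf{Estimate (i).}
For the first term I would bound $\|v_1(t)\|_{L_p}\le\tfrac1{\Gamma(\Lambda)}\int_0^t(t-s)^{\Lambda-1}\|\bar f(s)\|_{L_p}ds$ and apply H\"older in $s$ with exponents $p$ and $p/(p-1)$; this is exactly where $\Lambda>1/p$ is used, and it gives $\mathbb{E}\sup_{t\le T}\|v_1(t)\|^p_{L_p}\le C\|f\|^p_{\mathbb{H}^{\phi,\gamma}_p(T)}$. For the second term, since $\Lambda-\beta\ge0$ I would use the elementary inequality $|(I^\nu_tF)(t)|\le\tfrac{t^\nu}{\Gamma(\nu+1)}\sup_{[0,t]}|F|$ (valid for $\nu\ge0$), applied in $L_p$, to obtain $\sup_{t\le T}\|(I^{\Lambda-\beta}_tN)(t)\|_{L_p}\le C(T)\sup_{t\le T}\|N_t\|_{L_p}$, and then the $L_p$-version of BDG (proved pointwise in $x$ from scalar BDG and Doob's inequality, then integrated in $x$ with Jensen) to obtain $\mathbb{E}\sup_{t\le T}\|N_t\|^p_{L_p}\le C\|g\|^p_{\mathbb{H}^{\phi,\gamma}_p(T,l_2)}$. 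Adding the two bounds proves (i).

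\textbf{Estimate (ii).}
I would bound $\|\tilde u_1(r)\|^p_{L_p}\le C(T)\int_0^r(r-s)^{\alpha-1}\|\bar f(s)\|^p_{L_p}ds$ by H\"older with respect to the finite measure $(r-s)^{\alpha-1}ds$ (finite since $\alpha>0$), take expectation, integrate in $r\le t$, swap the order of integration, and use $\int_0^t(t-s)^\alpha F'(s)ds=\alpha\int_0^t(t-s)^{\alpha-1}F(s)ds$ with $F(s)=\|f\|^p_{\mathbb{H}^{\phi,\gamma}_p(s)}$ to get a bound by $C(T)\int_0^t(t-s)^{\alpha-1}\|f\|^p_{\mathbb{H}^{\phi,\gamma}_p(s)}ds$. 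For $J$ I would apply BDG in $s$ (for a.e. fixed $x$), then Minkowski's integral inequality in $L_{p/2}(\bR^d)$, then H\"older with respect to the finite measure $(r-s)^{2(\alpha-\beta)}ds$ (finite precisely because $\beta<\alpha+\tfrac12$), which gives $\mathbb{E}\|J(r)\|^p_{L_p}\le C(T)\int_0^r(r-s)^{2(\alpha-\beta)}\mathbb{E}\|\bar g(s)\|^p_{L_p(l_2)}ds$; then integrating in $r$, swapping, and using the analogous identity with exponent $2(\alpha-\beta)+1$ and $G(s)=\|g\|^p_{\mathbb{H}^{\phi,\gamma}_p(s,l_2)}$ gives $C(T)\int_0^t(t-s)^{2(\alpha-\beta)}\|g\|^p_{\mathbb{H}^{\phi,\gamma}_p(s,l_2)}ds$. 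Finally, since $\theta\le\alpha$, $\theta\le 2(\alpha-\beta)+1$ and $t-s\le T$, both $(t-s)^{\alpha-1}$ and $(t-s)^{2(\alpha-\beta)}$ are $\le C(T)(t-s)^{\theta-1}$, which yields \eqref{eq:solution space estimate 1}.

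\textbf{Main obstacle.}
I expect the only genuinely delicate point to be the passage, in the first step, from the distributional identities of Definition \ref{defn 1} and \eqref{eqn 7.15.2} to the $L_p$-valued closed forms for $v$ and $\bar u$: one must invoke the stochastic Fubini theorem and, in the regime $\alpha<\beta<\alpha+\tfrac12$, Lemma \ref{fracint of dw}(iii) for $\partial^{\beta-\alpha}_t$ of a martingale, and then verify that every object in the representation (in particular $J$) is a bona fide $L_p(\bR^d)$-valued process and not merely a tempered distribution --- which is exactly what the a priori bounds obtained in the two estimate steps also supply. Everything else is soft: BDG, H\"older/Minkowski, Fubini, and the two one-line integration-by-parts identities, with the hypotheses $\Lambda>1/p$ and $\beta<\alpha+\tfrac12$ entering only to keep the relevant time kernels integrable. (Alternatively, after the $\gamma=0$ reduction the statement is purely about time-fractional stochastic convolutions in $L_p(\bR^d)$ and one may instead quote the corresponding result from \cite{kim2020sobolev}.)
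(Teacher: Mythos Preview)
Your proposal is correct and follows essentially the same approach as the paper: reduce to $\gamma=0$ via the isometry $(1-\phi(\Delta))^{\gamma/2}$ (Remark \ref{remark 4.7.3} and Lemma \ref{H_p^phi,gamma space}), after which the statement concerns only time-fractional stochastic convolutions in $L_p(\bR^d)$. The paper then simply quotes \cite[Proposition 2.11]{kim2020sobolev} for (i) and \cite[Theorem 2.1 (iv)]{kim16timefractionalspde} for (ii), whereas you spell out the underlying H\"older/BDG/Fubini/integration-by-parts argument that proves those cited results --- an option you yourself note at the end.
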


\begin{proof}
Again, we only need to consider the case $\gamma=0$. Therefore, (i) and (ii) follow from \cite[Proposition 2.13]{kim2020sobolev} and  \cite[Theorem 2.1 (iv)]{kim16timefractionalspde}, respectively.
\end{proof}

 For $\alpha\in(0,1)$, $\beta< \alpha+\frac{1}{2}$, and $\kappa\in (0,1)$, define
\begin{equation*}
c_0 :=c_{0}(\alpha,\beta,\kappa)=\frac{(2\beta-1)^{+}}{\alpha}+\kappa 1_{\beta=1/2}.
\end{equation*}

Below we use  notation $f(u)$, and $g(u)$ to denote $f(\omega, t,x,u)$, and $g(\omega, t,x,u)$ respectively.

\begin{assumption}\label{asm 11.15.13:16}

(i) For any $u\in\mathbb{H}^{\phi,\gamma+2}_{p}(T)$,
$$
f(u)\in\mathbb{H}^{\phi,\gamma}_{p}(T), \quad g(u)\in\mathbb{H}^{\phi,\gamma+c_{0}}_{p}(T,l_{2}).
$$

(ii) For any $\varepsilon>0$, there exists a constant $N=N(\varepsilon)>0$ so that
\begin{equation*}
\begin{aligned}
\|f(t,u)-f(t,v)\|_{H^{\phi,\gamma}_{p}}&+\|g(t,u)-g(t,v)\|_{H^{\phi,\gamma+c_{0}}_{p}(l_{2})}
\\
&   \qquad \leq  \varepsilon \|u-v\|_{H^{\phi,\gamma+2}_{p}}+N \|u-v\|_{H^{\phi,\gamma}_{p}}
\end{aligned}
\end{equation*}
 for any $\omega,t$, and $u,v\in H^{\phi,\gamma+2}_{p}$ .
\end{assumption}

Here is our main result for SPDE driven by a family of independent  one-dimensional Wiener processes. The proof of Theorem \ref{thm:main results} is  given in Section \ref{sec proof}.

\begin{thm}
                    \label{thm:main results}

Let $\alpha\in(0,1)$, $\beta<\alpha+1/2$, $p\geq2$, $\gamma\in \bR$, and $T\in(0,\infty)$. Let Assumption  \ref{asm 11.15.13:16} hold. Then, the equation
\begin{equation} \label{main equation}
\partial_{t}^{\alpha}u=\phi(\Delta)u+f(u)+\partial_{t}^{\beta}\sum_{k=1}^\infty\int_{0}^{t}g^{k}(u)dw_{s}^{k}, \quad t\in (0,T],
\end{equation}
with $u(0,\cdot)=0$ has a unique solution $u\in\bH_{p}^{\phi,\gamma+2}(T)$ in the sense of distribution, and for this solution we have
\begin{equation}   \label{eq: a priori estimate non-div}
\|u\|_{\bH_{p}^{\phi,\gamma+2}(T)}\leq
 C\left(\|f(0)\|_{\mathbb{H}_{p}^{\phi,\gamma}(T)}+\|g(0)\|_{\mathbb{H}_{p}^{\phi,\gamma+c_{0}}(T,l_{2})}\right),
\end{equation}
where the constant $C$ depends only on $\alpha,\beta,d,p,\phi,\gamma,\kappa$, and $T$. 
\end{thm}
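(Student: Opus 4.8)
The overall strategy is the classical one for $L_p$-theory of SPDEs: first dispose of the initial value, then run a fixed-point argument over the linear equation using the maximal regularity and its time-localized refinement obtained in Section~\ref{2105041510}. Concretely, I will use the following consequence of the a priori estimates of that section (together with the density of $\bH_0^\infty(T,l_2)$ in $\bH_p^{\phi,\gamma+c_0}(T,l_2)$ and the fundamental-solution/stochastic-convolution representation of the solution): for every $f\in\bH_p^{\phi,\gamma}(T)$ and $g\in\bH_p^{\phi,\gamma+c_0}(T,l_2)$ the linear equation
\[
\partial_t^\alpha u=\phi(\Delta)u+f+\partial_t^\beta\sum_{k=1}^\infty\int_0^t g^k\,dw^k_s,\qquad u(0,\cdot)=0
\]
has a unique solution $u\in\bH_p^{\phi,\gamma+2}(T)$, and there is $\theta_0\in(0,1]$ (depending on $\alpha,\beta$) such that
\[
\|u\|^p_{\bH_p^{\phi,\gamma+2}(t)}\le C\int_0^t(t-s)^{\theta_0-1}\Big(\|f\|^p_{\bH_p^{\phi,\gamma}(s)}+\|g\|^p_{\bH_p^{\phi,\gamma+c_0}(s,l_2)}\Big)ds,\qquad t\le T;
\]
in particular the global bound $\|u\|_{\bH_p^{\phi,\gamma+2}(T)}\le C\big(\|f\|_{\bH_p^{\phi,\gamma}(T)}+\|g\|_{\bH_p^{\phi,\gamma+c_0}(T,l_2)}\big)$ holds. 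The exponent $\gamma+2-2/\alpha p$ in the definition of $U_p^{\phi,\gamma+2}$ is precisely the time-trace loss, and I will also use that the homogeneous problem $\partial_t^\alpha v=\phi(\Delta)v$, $v(0,\cdot)=u_0$, studied via the kernel bounds of Section~\ref{section3}, has a solution $\bar u\in\bH_p^{\phi,\gamma+2}(T)$ with $\|\bar u\|_{\bH_p^{\phi,\gamma+2}(T)}\le C\|u_0\|_{U_p^{\phi,\gamma+2}}$.

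First I would reduce to $u_0=0$. Let $\bar u$ solve the homogeneous problem with $\bar u(0,\cdot)=u_0$ and set $v:=u-\bar u$. Then $u$ solves \eqref{main equation} if and only if $v$ solves it with $u_0=0$ and with $\hat f(\omega,t,x,v):=f(\omega,t,x,v+\bar u(t,x))$ and $\hat g^k(\omega,t,x,v):=g^k(\omega,t,x,v+\bar u(t,x))$ in place of $f,g^k$. A direct check shows $\hat f,\hat g$ satisfy Assumption~\ref{asm 11.15.13:16} with the same $\varepsilon,N$, that $\hat f(0)=f(\bar u)\in\bH_p^{\phi,\gamma}(T)$ and $\hat g(0)=g(\bar u)\in\bH_p^{\phi,\gamma+c_0}(T,l_2)$ (by Assumption~\ref{asm 11.15.13:16}(i),(ii) and $\bar u\in\bH_p^{\phi,\gamma+2}(T)$), and that $\|\hat f(0)\|_{\bH_p^{\phi,\gamma}(T)}+\|\hat g(0)\|_{\bH_p^{\phi,\gamma+c_0}(T,l_2)}\le C\big(\|u_0\|_{U_p^{\phi,\gamma+2}}+\|f(0)\|_{\bH_p^{\phi,\gamma}(T)}+\|g(0)\|_{\bH_p^{\phi,\gamma+c_0}(T,l_2)}\big)$. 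Hence it suffices to prove the theorem when $u_0=0$.

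With $u_0=0$, define $\cR:\bH_p^{\phi,\gamma+2}(T)\to\bH_p^{\phi,\gamma+2}(T)$ by letting $\cR v$ be the unique solution of the linear equation with data $f(v),g(v)$; this is well defined by Assumption~\ref{asm 11.15.13:16}(i) and the linear result above, and its fixed points are exactly the solutions of \eqref{main equation} with $u_0=0$. For $v,\bar v\in\bH_p^{\phi,\gamma+2}(T)$, the difference $\cR v-\cR\bar v$ solves the linear equation with data $f(v)-f(\bar v),\,g(v)-g(\bar v)$ and zero initial value, so applying the time-localized estimate and then Assumption~\ref{asm 11.15.13:16}(ii) (together with $H_p^{\phi,\gamma+2}\subset H_p^{\phi,\gamma}$ to subsume the lower-order term) gives, for $t\le T$,
\[
\|\cR v-\cR\bar v\|^p_{\bH_p^{\phi,\gamma+2}(t)}\le C_1\int_0^t(t-s)^{\theta_0-1}\|v-\bar v\|^p_{\bH_p^{\phi,\gamma+2}(s)}\,ds.
\]
Iterating this singular Volterra inequality yields $\|\cR^n v-\cR^n\bar v\|^p_{\bH_p^{\phi,\gamma+2}(T)}\le \big(C_1\Gamma(\theta_0)T^{\theta_0}\big)^n/\Gamma(n\theta_0+1)\cdot\|v-\bar v\|^p_{\bH_p^{\phi,\gamma+2}(T)}$, which tends to $0$ as $n\to\infty$; hence some $\cR^n$ is a contraction and $\cR$ has a unique fixed point $u\in\bH_p^{\phi,\gamma+2}(T)$, proving existence and uniqueness in $\bH_p^{\phi,\gamma+2}(T)$. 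For the bound \eqref{eq: a priori estimate non-div}, apply the time-localized linear estimate to $u=\cR u$ with data $f(u),g(u)$ and use Assumption~\ref{asm 11.15.13:16}(ii) with $v=0$ (and the embedding) to get
\[
\|u\|^p_{\bH_p^{\phi,\gamma+2}(t)}\le C\int_0^t(t-s)^{\theta_0-1}\Big(\|u\|^p_{\bH_p^{\phi,\gamma+2}(s)}+\|f(0)\|^p_{\bH_p^{\phi,\gamma}(s)}+\|g(0)\|^p_{\bH_p^{\phi,\gamma+c_0}(s,l_2)}\Big)ds,
\]
and the generalized (Mittag--Leffler type) Gronwall inequality gives $\|u\|_{\bH_p^{\phi,\gamma+2}(T)}\le C\big(\|f(0)\|_{\bH_p^{\phi,\gamma}(T)}+\|g(0)\|_{\bH_p^{\phi,\gamma+c_0}(T,l_2)}\big)$; combining this with the reduction to $u_0=0$ yields \eqref{eq: a priori estimate non-div}.

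Essentially all the difficulty lies upstream: in Section~\ref{2105041510} (the sharp-function bound $v^\sharp\le C(\bM|g|^2)^{1/2}$ and the resulting maximal regularity for the model linear equation) and in Section~\ref{section3} (sharp bounds on fractional derivatives of the fundamental solution, and the trace inequality $\|\bar u\|_{\bH_p^{\phi,\gamma+2}(T)}\le C\|u_0\|_{U_p^{\phi,\gamma+2}}$ used in the reduction). Given those inputs the argument above is routine; the one point deserving care is that it is the time-localized estimate, not merely the global maximal regularity, that must drive the fixed-point step — with the global bound alone one cannot absorb the lower-order constant $N$ of Assumption~\ref{asm 11.15.13:16}(ii) and obtain a contraction.
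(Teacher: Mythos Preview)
Your overall architecture is reasonable, but the key input you invoke is not available. You claim that Section~\ref{2105041510} yields a time-localized maximal regularity bound of the form
\[
\|u\|^p_{\bH_p^{\phi,\gamma+2}(t)}\le C\int_0^t(t-s)^{\theta_0-1}\Big(\|f\|^p_{\bH_p^{\phi,\gamma}(s)}+\|g\|^p_{\bH_p^{\phi,\gamma+c_0}(s,l_2)}\Big)ds
\]
for the linear problem with zero data. The paper does \emph{not} prove this, and in fact such an inequality is false already in the classical case $\alpha=\beta=1$, $\phi(\lambda)=\lambda$: take $g=0$ and $f$ supported in $(t-\epsilon,t)$; then $\|\Delta u\|^p_{L_p((0,t)\times\bR^d)}$ can be comparable to $\|f\|^p_{L_p}$ by sharp maximal regularity, while your right-hand side is $\le C\epsilon\|f\|^p_{L_p}$. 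What the paper actually has is (a)~the \emph{global} top-order bound $\|u\|_{\bH_p^{\phi,\gamma+2}(t)}\le C(\|f\|_{\bH_p^{\phi,\gamma}(t)}+\|g\|_{\bH_p^{\phi,\gamma+c_0}(t,l_2)})$ from Lemma~\ref{thm:model eqn} and the deterministic theory, together with (b)~the \emph{time-localized} bound \eqref{eq:solution space estimate 1}, which controls only the \emph{lower-order} norm $\|u\|_{\bH_p^{\phi,\gamma}(t)}$.

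The paper's fixed-point step (Step~2 of Section~\ref{sec proof}) therefore does not run as a simple Volterra iteration in $\bH_p^{\phi,\gamma+2}$. Instead it exploits the $\varepsilon$--$N$ structure of Assumption~\ref{asm 11.15.13:16}(ii): from the global bound one gets
\[
\|u^{n+1}-u^n\|^p_{\bH_p^{\phi,\gamma+2}(t)}\le C\varepsilon^p\|u^n-u^{n-1}\|^p_{\bH_p^{\phi,\gamma+2}(t)}+CN(\varepsilon)\|u^n-u^{n-1}\|^p_{\bH_p^{\phi,\gamma}(t)},
\]
and only the \emph{lower-order} piece is then pushed through \eqref{eq:solution space estimate 1} to produce the convolution factor $(t-s)^{\theta-1}$, at the price of dropping two more Picard indices. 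This yields a two-step recursion mixing a small multiplicative constant with a Volterra-type term; summing and using $\Gamma(k\theta+1)^{-1}$ decay gives the Cauchy property. Your argument collapses precisely at the point you yourself flag as delicate: you correctly note that the global bound alone cannot absorb $N$, but the remedy is not a (nonexistent) time-localized top-order estimate---it is the interplay of the global top-order bound with the time-localized lower-order bound via the $\varepsilon$--$N$ splitting.
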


\begin{rem}
Recall $c_0=\frac{(2\beta-1)^{+}}{\alpha}+\kappa 1_{\beta=1/2}$.  Note that the  pair $(\gamma+2, \gamma+c_0)$  determines the regularity relation between the solution $u$ and forcing term $g(0)$. Here are some comments and details on $c_0$:

\begin{itemize}

\item{} if $\beta>1/2$, then $c_0=\frac{(2\beta-1)}{\alpha}$. Hence, to have $H^{\phi, \gamma+2}_p$-valued solution $u$, we require  $g(0)$ to be  an $H^{\phi, \gamma+\frac{(2\beta-1)}{\alpha}}_p(l_2)$-valued process.  This relation is optimal and can be easily proved using a scaling argument (see e.g. \cite[Remark 2.20]{kim2020sobolev}). 

\item{}  if $\beta<1/2$, then   $c_0=0$. Thus, the stochastic forcing term $g(0)$ is not assumed to be smoother than the deterministic forcing term $f(0)$. Actually, if $\beta<1/2$, we can transform equation \eqref{main equation} into a PDE by absorbing the stochastic term  $\partial_{t}^{\beta}\sum_{k=1}^\infty\int_{0}^{t}g^{k}(u)dw_{s}^{k}$ into $f(u)$.

\item{} if $\beta=1/2$, we assume $c_0>0$.  This is a technical assumption: we handle the case $\beta=1/2$ based on the result for $\beta>1/2$, and this approach yields extra regularity on $g(0)$.

\end{itemize}
\end{rem}

\begin{rem}
Regarding the equations with nonzero initial values, the similar argument in \cite[Section 4.4]{Veraar measurable time} yields that for the solution to
$$
\partial_t^\alpha u=\phi(\Delta)u, \quad t>0, x\in \bR^d; \quad u(0,\cdot)=u_0,
$$
we have
$$
\|u\|_{\bH_{p}^{\phi,\gamma+2}(T)}\leq
 C\|u_0\|_{L_p(\Omega,\cF_0;{B}_{p}^{\phi,\gamma+2-2/\alpha p})},
$$
where ${B}_{p}^{\phi,\gamma+2-2/\alpha p}$ denotes the Besov spaces related to $\phi$ (see e.g. \cite[Definition 2.3]{kim2020nonlocal}). However, for the simplicity, we always assume $u(0)=0$. 
\end{rem}

\begin{rem}

By letting $\alpha\to 1$ and taking $\beta=1$ and $\phi(\lambda)=\lambda$, we (at least formally) get a classical result by Krylov \cite[Theorem 5.2]{kry99analytic}.
\end{rem}

\vspace{3mm}

Next, we consider the semi-linear SPDE driven by space-time white noise:
\begin{equation}\label{eqn 4.8.8}
\partial^{\alpha}_{t}u=\phi(\Delta)u+f(u)+\partial^{\beta-1}_{t} h(u) \dot{W}, \quad t>0\, ; \quad u(0,\cdot)=0.
\end{equation}
Here $\dot{W}$ is a space-time white noise on $[0,\infty)\times \bR^d$, and  the functions $f$ and $h$ depend on $(\omega,t,x,u)$.

First, to explain our sense of solutions,  let us multiply by a test function $\varphi\in \cS(\bR^d)$ to the equation,  integrate  over $[0,t)\times \bR^d$, and (at least formally)  get 
$$
(I^{1-\alpha}_t u(t,\cdot),\varphi)=\int^t_0(\phi(\Delta)u+f(u), \varphi)ds+\partial^{\beta-1}_t \int^t_0\int_{\bR^d} h(u)\varphi \dot{W}(dxds),
$$
where Walsh's stochastic integral against the space-time white noise is employed above. 
Applying $D^{1-\alpha}_t$ we further get
\begin{equation}
\label{eqn 4.8.1}
(u(t), \varphi)=I^{\alpha}_t (\phi(\Delta)u+f(u), \varphi)+\partial^{\beta-\alpha}_t \int^t_0\int_{\bR^d} h(u)\varphi \dot{W}(dxds).
\end{equation}
Now, let $\{\eta^{k}:k=1,2,\dots\}$ be an orthogonal basis on $L_2(\bR^d)$. Then (see \cite{Dalang,kry99analytic}) there exists a sequence of independent one-dimensional Wiener processes $\{w^k_t: k=1,2,\cdots\}$ such that 
$$
\int^t_0 \int_{\bR^d} X(s,x) \dot{W}(dxds)=\sum_{k=1}^{\infty}\int^t_0\int_{\bR^d} X(s,x)\eta^k(x)dx dw^k_s, \quad \forall t \, (a.s.)
$$
for any $X$ of the type $X=\zeta(x)1_{(\tau,\sigma]}(t)$, where $\tau,\sigma$ are bounded stopping times and $\zeta\in C^{\infty}_c(\bR^d)$.  
Thus \eqref{eqn 4.8.1} leads to the equation
\begin{equation}
\label{eqn 4.8.5}
\partial^{\alpha}_{t}u=\phi(\Delta)u+f(u)+\partial^{\beta}_{t}\sum_{k=1}^{\infty}\int_{0}^{t} h(u)\eta^k dw^{k}_{t}, \quad t>0;
\quad u(0,\cdot)=0.
\end{equation}

\begin{defn}
We say $u$ is a solution to equation \eqref{eqn 4.8.8} in the sense of distributions if  $u$ satisfies equation \eqref{eqn 4.8.5} in the sense of Definition \ref{defn 1}, that is,   for each $\varphi\in \cS(\bR^d)$,
$$
\left(u(t),\varphi\right)  =I_{t}^{\alpha}\left(\phi(\Delta)u+f(u),\varphi\right)+\partial_{t}^{\beta-\alpha}\sum_{k=1}^{\infty}\int_{0}^{t}\left(h(u)\eta^k,\varphi\right)dw_{s}^{k}
$$
holds  ``a.e. on $\Omega\times[0,T]$''.  Here $\partial^{\beta-\alpha}_t:=I^{\alpha-\beta}_t$ if $\beta\leq\alpha$. 
\end{defn}

Here comes our assumption on non-linear terms $f(u)$ and $h(u)$ together with some restrictions on $\beta$ and $d$. The argument $\omega$ is omitted  as usual.

\begin{assumption}\label{asm 4.7}

(i) The functions $f$ and $h$ are $\mathcal{P}\times\mathcal{B}(\bR^{d+1})$-measurable.

(ii) For each $\omega,t,x,u$ and $v$,
$$
|f(t,x,u)-f(t,x,v)| \leq K |u-v|,\quad |h(t,x,u)-h(t,x,v)| \leq  \xi(t,x) |u-v|,
$$
where $K$ is a constant and $\xi$ is a function of $(\omega,t,x)$. 

(iii) $\delta_0\in(1/4,1]$, $H_p^{\phi,1}\subset H_p^{\delta_0}$,  
\begin{equation}\label{eqn 10.17.13:27}
\beta < \left(1-\frac{1}{4\delta_0} \right)\alpha +\frac{1}{2}, \qquad d<2\delta_0\left(2-\frac{(2\beta-1)^+}{\alpha}\right)=:d_{0}.
\end{equation}
\end{assumption}

\begin{rem}
Using the Fourier multiplier theorem one can easily check that the embedding $H_p^{\phi,1}\subset H_p^{\delta_0}$ in Assumption \ref{asm 4.7} holds under  condition  \eqref{e:H}. 
\end{rem}

Note that $d_{0}\in(1,  4]$, and if $\beta<\alpha (1-\frac{3}{4\delta_0})+\frac{1}{2}$, then one can take $d=1,2,3$.  Recall
$$
f(0)=f(t,x,0), \quad h(0)=h(t,x,0).
$$

Here is our main result for SPDE driven by space-time white noise.
\begin{thm}\label{thm 10.27:15:35}
Suppose  Assumption \ref{asm 4.7} holds, and
$$
\|f(0)\|_{\mathbb{H}^{\phi,-k_{0}-c_{0}}_{p}(T)}+\|h(0)\|_{\mathbb{L}_{p}(T)}+\sup_{\omega,t}\|\xi\|_{L_{2s}}  < \infty,
$$
where $k_{0}$ and $s$ satisfy
\begin{equation}\label{eqn 4.7.8}
\frac{d}{2\delta_{0}}<k_{0}<\left(2-\frac{(2\beta-1)^{+}}{\alpha}\right)\wedge \frac{d}{\delta_{0}}, \quad \frac{d}{2k_{0}\delta_{0}-d}<s.
\end{equation}
Then equation \eqref{eqn 4.8.8} has unique solution $u\in\bH^{\phi,2-k_{0}-c_{0}}_{p}(T)$, and for this solution we have
\begin{equation*}
\|u\|_{\bH^{\phi,2-k_{0}-c_{0}}_{p}(T)} \leq C \left( \|f(0)\|_{\mathbb{H}^{\phi,-k_{0}-c_{0}}_{p}(T)}+\|h(0)\|_{\mathbb{L}_{p}(T)} \right),
\end{equation*} 
where $C$ is a constant independent of $u$.
\end{thm}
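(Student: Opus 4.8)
The plan is to derive Theorem~\ref{thm 10.27:15:35} from Theorem~\ref{thm:main results}. Fix a real orthonormal basis $\{\eta^k\}_{k\in\bN}$ of $L_2(\bR^d)$; by the Walsh--cylindrical identification recalled before the definition preceding the theorem, a solution of \eqref{eqn 4.8.8} is by definition a solution of \eqref{eqn 4.8.5}, that is of equation \eqref{main equation} with the same $f$ and with $g^k(u):=h(u)\eta^k$. Set $\gamma:=-k_0-c_0$, so that $\gamma+2=2-k_0-c_0>0$ (since $k_0<2-(2\beta-1)^+/\alpha$ and $c_0$ differs from $(2\beta-1)^+/\alpha$ only by the small term $\kappa 1_{\beta=1/2}$), while $\gamma<0$. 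Once Assumption~\ref{asm 11.15.13:16} is verified for this $\gamma$ and the pair $(f,(g^k))$, Theorem~\ref{thm:main results} produces a unique $u\in\bH_p^{\phi,\gamma+2}(T)=\bH_p^{\phi,2-k_0-c_0}(T)$ together with the estimate whose right-hand side is $\|u_0\|_{U_p^{\phi,-k_0-c_0+2}}+\|f(0)\|_{\bH_p^{\phi,-k_0-c_0}(T)}+\|g(0)\|_{\bH_p^{\phi,-k_0}(T,l_2)}$; since $U_p^{\phi,\gamma+2}=U_p^{\phi,-k_0-c_0+2}$ this matches the assertion, and it only remains to bound $\|g(0)\|_{\bH_p^{\phi,-k_0}(T,l_2)}=\|h(0)\eta\|_{\bH_p^{\phi,-k_0}(T,l_2)}$ by $C\|h(0)\|_{\bL_p(T)}$, which is a special case of the key estimate below.

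The key estimate is the $l_2$-smoothing bound: if $k_0>d/(2\delta_0)$, then for every $w\in L_p$,
\[
\big\||(1-\phi(\Delta))^{-k_0/2}(w\eta)|_{l_2}\big\|_{L_p}\le C\,\|w\|_{L_p},\qquad w\eta:=(w\eta^1,w\eta^2,\dots).
\]
Indeed, let $R:=\cF^{-1}\big[(1+\phi(|\cdot|^2))^{-k_0/2}\big]$. Assumption~\ref{ass bernstein} with $r=1$ in \eqref{e:H} gives $\phi(|\xi|^2)\ge\kappa_0\phi(1)|\xi|^{2\delta_0}$ for $|\xi|\ge1$, so $(1+\phi(|\xi|^2))^{-k_0}$ is integrable precisely because $2k_0\delta_0>d$; hence $R\in L_2(\bR^d)$. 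Since $(1-\phi(\Delta))^{-k_0/2}(w\eta^k)(x)=\big(R(x-\cdot)w(\cdot),\eta^k\big)_{L_2}$ and $y\mapsto R(x-y)w(y)$ lies in $L_2(\bR^d)$ for a.e.\ $x$, Parseval's identity for the orthonormal basis $\{\eta^k\}$ gives $\sum_k|(1-\phi(\Delta))^{-k_0/2}(w\eta^k)(x)|^2=\int_{\bR^d}|R(x-y)|^2|w(y)|^2\,dy=(|R|^2\ast|w|^2)(x)$; raising to the power $p/2$, integrating in $x$, and using Young's inequality ($L_1\ast L_{p/2}\to L_{p/2}$, with the obvious modification when $p=2$) yields the bound with $C=\|R\|_{L_2}^2$. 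Taking $w\in\{h(u),\,h(0),\,h(u)-h(v)\}$ turns every $\bH_p^{\phi,-k_0}(\cdot,l_2)$-norm of the noise coefficient into an $L_p$-norm of $h(\cdot)$.

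It then remains to check Assumption~\ref{asm 11.15.13:16}(ii); part~(i) and the measurability requirement follow from the same estimates (after adding back $f(0)$ and $h(0)\eta$, which have finite norms by hypothesis and the key estimate) together with Assumption~\ref{asm 4.7}(i) and predictability of $u$. For the drift, since $\gamma=-k_0-c_0\le0$ and $|f(\cdot,u)-f(\cdot,v)|\le K|u-v|$, Lemma~\ref{H_p^phi,gamma space}(iii) gives $\|f(u)-f(v)\|_{H_p^{\phi,\gamma}}\le C\|f(u)-f(v)\|_{L_p}\le CK\|u-v\|_{L_p}$, and the interpolation inequality for the scale $\{H_p^{\phi,\nu}\}$ (valid since $-k_0-c_0<0<2-k_0-c_0$) converts $\|u-v\|_{L_p}$ into $\ep\|u-v\|_{H_p^{\phi,\gamma+2}}+N(\ep)\|u-v\|_{H_p^{\phi,\gamma}}$. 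For the noise, the key estimate and $|h(\cdot,u)-h(\cdot,v)|\le\xi|u-v|$ give $\|g(u)-g(v)\|_{H_p^{\phi,\gamma+c_0}(l_2)}=\|(h(u)-h(v))\eta\|_{H_p^{\phi,-k_0}(l_2)}\le C\|\xi(u-v)\|_{L_p}\le C\big(\sup_{\omega,t}\|\xi\|_{L_{2s}}\big)\|u-v\|_{L_q}$ with $\tfrac1q=\tfrac1p-\tfrac1{2s}$ (H\"older); then the Sobolev-type embedding $H_p^{\phi,\nu}\hookrightarrow L_q$, valid for $\nu\ge\tfrac{d}{\delta_0}(\tfrac1p-\tfrac1q)=\tfrac{d}{2s\delta_0}$ --- a consequence of $\phi(|\xi|^2)\gtrsim|\xi|^{2\delta_0}$, \eqref{eqn 07.19.14.35}, the Fourier multiplier theorem and the classical Sobolev embedding --- combined once more with interpolation, turns $\|u-v\|_{L_q}$ into $\ep\|u-v\|_{H_p^{\phi,\gamma+2}}+N(\ep)\|u-v\|_{H_p^{\phi,\gamma}}$. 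The restrictions \eqref{eqn 4.7.8} and \eqref{eqn 10.17.13:27} on $(k_0,s,d,\beta,\delta_0)$ are used precisely to guarantee $\tfrac{d}{2s\delta_0}<2-k_0-c_0$, i.e.\ that an admissible regularity index $\nu\in[\tfrac{d}{2s\delta_0},\,2-k_0-c_0)$ is available in this last step.

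The conceptual heart --- and the main obstacle --- is the $l_2$-smoothing estimate together with the ensuing bookkeeping: it is here that the roughness of the space-time white noise forces a loss of more than $d/(2\delta_0)$ derivatives in the $\phi$-scale, and one must verify that \eqref{eqn 4.7.8}--\eqref{eqn 10.17.13:27} leave enough room in the Sobolev scale to absorb $\|\xi(u-v)\|_{L_p}$ into an $\ep$-multiple of the top-order norm. Once this is in place, the reduction of \eqref{eqn 4.8.8} to \eqref{eqn 4.8.5} and the appeal to Theorem~\ref{thm:main results} are routine.
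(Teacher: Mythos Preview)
Your overall strategy---reduce to Theorem~\ref{thm:main results} with $\gamma=-k_0-c_0$, verify Assumption~\ref{asm 11.15.13:16}, and handle the $l_2$-noise term via Parseval and integrability of the kernel $R=(1-\phi(\Delta))^{-k_0/2}\delta_0$---matches the paper's. The gap is in how you use the kernel. You only prove $R\in L_2$, obtain $\|g(u)-g(v)\|_{H_p^{\phi,-k_0}(l_2)}\le C\|\xi(u-v)\|_{L_p}$, then apply H\"older to land in $\|u-v\|_{L_q}$ with $1/q=1/p-1/(2s)$, and finally invoke the embedding $H_p^{\phi,\nu}\hookrightarrow L_q$ with $\nu\ge d/(2s\delta_0)$. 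For the interpolation step you need $d/(2s\delta_0)<2-k_0-c_0$, and you assert that \eqref{eqn 4.7.8}--\eqref{eqn 10.17.13:27} guarantee this. They do not. Take $d=3$, $\delta_0=1$, $\beta<1/2$ (so $c_0=0$), $k_0=1.9$, $s=4$: then \eqref{eqn 4.7.8} reads $1.5<1.9<2$ and $s>3/(2\cdot1.9-3)=3.75$, both satisfied, yet $d/(2s\delta_0)=3/8>0.1=2-k_0-c_0$. Your Sobolev-embedding route therefore fails under the stated hypotheses.

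The paper avoids this by proving the sharper integrability $R\in L_{2r}$ for $r=s/(s-1)>1$ (Lemma~\ref{lem 11.18.11:07}; the condition $2r<d/(d-k_0\delta_0)$ is exactly equivalent to $s>d/(2k_0\delta_0-d)$). With this in hand, one applies H\"older \emph{inside} the $y$-integral, splitting $|\xi(y)|^2$ from $|R(x-y)|^2|u(y)-v(y)|^2$ with exponents $s,r$, and then Young's inequality $L_1\ast L_{p/(2r)}\to L_{p/(2r)}$ gives directly
\[
\|g(u)-g(v)\|_{H_p^{\phi,-k_0}(l_2)}\le C\|\xi\|_{L_{2s}}\|R\|_{L_{2r}}\|u-v\|_{L_p},
\]
i.e.\ the $L_p$-norm of $u-v$ rather than $L_q$ with $q>p$. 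Since $\gamma<0<\gamma+2$, plain interpolation then finishes the verification of Assumption~\ref{asm 11.15.13:16}(ii) without any Sobolev embedding. The fix to your argument is thus to upgrade your $L_2$-bound on $R$ to $L_{2r}$ and move the H\"older step before, not after, the Parseval/Young computation.
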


\begin{rem}
(i) Due to the second relation in \eqref{eqn 10.17.13:27}  one can always choose $k_{0}$ satisfying \eqref{eqn 4.7.8}.

(ii)  Note that the space for the solution is   $\bH^{\phi,2-k_{0}-c_{0}}_{p}(T)$, and  the  constant $2-k_{0}-c_{0}$ represents the regularity (or differentiability) of the solution with respect to the spatial variable. By  the definition of $c_{0}$ we have
\begin{equation*}
\begin{aligned}
0<2-k_{0}-c_{0}<
\begin{cases}
2-\frac{d}{2\delta_{0}}-\frac{2\beta-1}{\alpha} &: \quad  \beta>\frac{1}{2}
\\
2-\frac{d}{2\delta_{0}} &: \quad  \beta\leq \frac{1}{2}.
\end{cases}
\end{aligned}
\end{equation*}
If $\xi$ is bounded, then one can choose $r=1$. Thus by taking $k_{0}$ sufficiently close to $d/(2\delta_{0})$, one can make $2-k_{0}-c_{0}$ as close to the above upper bounds as one wishes.
\end{rem}

\mysection{A priori estimate for linear equation} \label{2105041510}

In this section we obtain a priori estimate for the solution to the linear equation
\begin{equation}
\label{eqn 4.10}
\partial_t^\alpha u = \phi(\Delta)u +\partial^{\beta}_t\sum_{k=1}^{\infty} \int^t_0 g^k dw^k_s,\quad t>0; \quad u(0,\cdot)=0.
\end{equation}
More precisely, we prove if $\frac{1}{2}<\beta<\alpha+\frac{1}{2}$,
  then
  \begin{equation}
\label{a priori}
\bE\|\phi(\Delta)^{1-(2\beta-1)/{2\alpha}} u\|^p_{L_p( (0,\infty)\times \bR^d)}\leq C \bE \||g|_{l_2}\|^p_{L_p((0,\infty)\times \bR^d)},
\end{equation}
where $C$ is a constant independent of $u$. 

If $\alpha=\beta=1$, then a version of \eqref{a priori} is obtained in \cite{kim2013parabolic}.  In this case, the solution to \eqref{eqn 4.10} is given (at least formally) by the formula
 $$
 u(t)=\sum_{k=1}^{\infty} \int^t_0 S_{t-s}g^k(\cdot, s)(x)dw^k_s,
$$
where $S_t: f\to e^{t\phi(\Delta)}f$,
and  \eqref{a priori} reads as
\begin{align*}
\left\|  \sqrt{-\phi(\Delta)}\sum_{k=1}^{\infty} \int^t_0 e^{(t-s)\phi(\Delta)}g^k(s)dw^k_s \right\|_{L_p(\Omega\times (0,\infty)\times \bR^d)} \leq c \left\||g|_{l_2} \right\|_{L_p(\Omega\times (0,\infty)\times \bR^d)}.
\end{align*}

We give two independent proofs of \eqref{a priori}. One is based on Krylov's analytic approach and the other is  based on $H^{\infty}$-calculus. The first proof is much elementary,  but  it  requires  long calculus and some extra condition on $\phi$.

\subsection{Analytic approach} \label{Analytic}

In this subsection, we impose the following assumption on $\phi$;
\begin{assumption}
   \label{ass bernstein}
    $\phi$ is  a Bernstein function for which 
there exist constants $\delta_0\in (0,1]$ and $\kappa_0>0$  such that
\begin{equation}\label{e:H}
\kappa_0 \left(\frac{R}{r}\right)^{\delta_0}\leq\frac{\phi(R)}{\phi(r)}, \qquad 0<r<R<\infty.
\end{equation}
\end{assumption}
By Assumption \ref{ass bernstein} and the concavity of $\phi$, we have
\begin{equation}\label{phiratio}
\kappa_0 \left(\frac{R}{r}\right)^{\delta_{0}} \leq \frac{\phi(R)}{\phi(r)} \leq \frac{R}{r},\quad 0<r<R<\infty.
\end{equation}

Note that we admit the case $\delta_{0}=1$, and we assume \eqref{e:H} for all $0<r<R<\infty$.   Here are some examples of Bernstein functions satisfying Assumption \ref{ass bernstein}
 (see e.g. \cite[Chapter 16]{schilling2012bernstein} for more examples):

  \begin{enumerate}[(1)]
\item Stable subordinators : $\phi(\lambda)=\lambda^\beta, \quad 0<\beta\leq1$;

\item Sum of stable subordinators : $\phi(\lambda)=\lambda^{\beta_1}+\lambda^{\beta_2}, \quad 0<\beta_1,\beta_2\leq1$;

\item Stable with logarithmic correction : $\phi(\lambda)=\lambda^\beta (\log(1+\lambda))^\gamma, \quad \beta\in(0,1), \gamma\in(-\beta,1-\beta)$;

\item Relativistic stable subordinators : $\phi(\lambda)=(\lambda+m^{1/\beta})^\beta-m, \quad \beta\in(0,1), m>0$;

\item Conjugate geometric stable subordinators : $\phi(\lambda)=\frac{\lambda}{\log(1+\lambda^{\beta/2})}, \quad \beta\in(0,2)$.
\end{enumerate}

Recall that $S=\left(S_t\right)_{t\ge0}$ is a subordinator with Laplace exponent $\phi$ and  $W=\left(W_t\right)_{t\ge0}$ is a $d$-dimensional Brownian motion, independent of $S$.  It is well-known that the subordinate Brownian motion $X_t=W_{S_t}$ is a L\'evy process in $\bR^d$ with characteristic exponent $\phi(|\xi|^2)$ (see e.g. \cite{bogdan2009potential,sato1999levy}), that is, 
$$
\bE e^{-i X_t \cdot \xi}=e^{-t \phi(|\xi|^2)}, \qquad \forall\, \, t>0, \, \xi \in \bR^d.
$$
Here, by $p(t,x)=p_d(t,x)$, we denote  the transition density of $X_t$.

Let $Q_t$ be a subordinator,  independent of $X_t$, having the Laplace transform 
\begin{equation*}
\bE \exp(-\lambda Q_t)= \exp(-t\lambda^\alpha).
\end{equation*}
Such process exists since the function $\lambda \to \lambda^{\alpha}$ is a Bernstein function (see \eqref{eqn 4.10.1}). 
Let 
$$
R_t:=\inf\{s>0 : Q_s>t\}
$$ be the inverse process of the  subordinator $Q_{t}$, and let  $\varphi(t,r)$ denote the probability density function of $R_t$. Then,  it is known that   (see \cite[Lemma 5.1]{kim2020nonlocal}  or \cite[Theorem 1.1]{chen2017time}),  the function
\begin{eqnarray*}
q(t,x):=\int_0^\infty p(r,x)d_r \bP(R_t\leq r)
=\int_0^\infty p(r,x) \varphi(t,r) \,dr   \label{eqn 8.1.1}
\end{eqnarray*}
becomes the fundamental solution to  equation
\begin{equation*} 
\partial_t^\alpha u = \phi(\Delta)u ,\quad t>0; \quad u(0,\cdot)=u_0.
\end{equation*}
That is, $q(t,x)$ is the function such that under appropriate smoothness condition on $u_0$, the function $u(t,x):=(q(t,\cdot)\ast u_0(\cdot))(x)$ solves the above equation.
 Actually, the definition of  $q(t,x)$ implies that $q(t,x)$ is the transition density of $Y_t:=X_{R_t}$, which is called subordinate Brownian motion delayed by an inverse subordinator.

For $\beta\in\bR$, denote 
\begin{equation}
\label{varphi}
\varphi_{\alpha,\beta}(t,r):=D^{\beta-\alpha}_{t}\varphi(t,r):=(D_{t}^{\beta-\alpha}\varphi(\cdot,r))(t),
\end{equation}
and for $(t,x)\in(0,\infty)\times \bR^{d}\setminus \{0\}$ define
$$
q_{\alpha,\beta}(t,x):=\int_{0}^{\infty}p(r,x) \varphi_{\alpha,\beta}(t,r)dr,
$$
and
\begin{equation*}
q^{\gamma}_{\alpha,\beta} (t,x) :=\int_0^\infty \phi(\Delta)^\gamma p (r,x) \varphi_{\alpha,\beta}(t,r) dr.
\end{equation*}

Below we collect some   some properties of $q_{\alpha,\beta}$ and $q^{\gamma}_{\alpha,\beta}$. The proof will be given in Appendix \ref{appendix}. 
\begin{lem} \label{prop:kernel esti. of q}

Let  $m\in\bN_0$, $\alpha, \gamma \in(0,1)$, and  $\beta\in\bR$.

(i)  $D^{\beta-\alpha}_{t}q(t,x)$ is well-defined for $(t,x)\in(0,\infty)\times\bR^d\setminus\{0\}$, and it holds that
$$
D^{\beta-\alpha}_{t}q(t,x)=q_{\alpha,\beta}(t,x).
$$

(ii) $D^m_x q_{\alpha,\beta} (t,x)$ is well defined for $(t,x)\in(0,\infty)\times\bR^d\setminus\{0\}$, and there exists $C=C(\alpha,\beta, d, \delta_0, \kappa_0, m)$ such that
\begin{equation}\label{eqn 09.03.19:26}
|D^m_x q_{\alpha,\beta} (t,x)|\leq C \, t^{2\alpha-\beta}\frac{\phi(|x|^{-2})}{|x|^{d+m}}.
\end{equation}
Additionally, if $t^\alpha \phi(|x|^{-2})\geq1$, then 
\begin{equation}\label{eqn 09.03.19:26-2}
|D^m_x q_{\alpha,\beta} (t,x)|\leq C t^{-\beta} \int_{(\phi(|x|^{-2}))^{-1}}^{2t^{\alpha}} (\phi^{-1}(r^{-1}))^{(d+m)/2} dr,
\end{equation}
where $\phi^{-1}$ denotes the inverse of $\phi$.

(iii)
$D^m_x q^{\gamma}_{\alpha,\beta} (t,x)$ is well defined for $(t,x)\in(0,\infty)\times\bR^d\setminus\{0\}$,  and there exists $C=C(\alpha,\beta, d, \delta_0, \kappa_0, \gamma, m)$ such that
\begin{equation} \label{qgamma whole est}
|D^m_x q^{\gamma}_{\alpha,\beta} (t,x)| \leq C \, t^{\alpha-\beta}\frac{\phi(|x|^{-2})^{\gamma}}{|x|^{d+m}}.
\end{equation}
Additionally, if $t^\alpha \phi(|x|^{-2}))\geq1$, then 
\begin{align} \label{qgamma partial est}
| D^m_x q^{\gamma}_{\alpha,\beta} (t,x)| \leq C\,t^{-\beta}  \int_{(\phi(|x|^{-2}))^{-1}}^{2t^{\alpha}} (\phi^{-1}(r^{-1}))^{(d+m)/2} r^{-\gamma}dr.
\end{align}

(iv) For any $t>0$,
\begin{equation} \label{int of q}
\int_{\bR^d} |q_{\alpha,\beta}(t,x)|dx \leq C\, t^{\alpha-\beta},
\end{equation}
and 
\begin{equation} \label{int of q^gamma}
\int_{\bR^d} |q^{\gamma}_{\alpha,\beta}(t,x)|dx \leq C\, t^{(1-\gamma)\alpha-\beta},
\end{equation}
 where $C=C(\alpha,\beta, d, \delta_0, \kappa_0, \gamma)$.
 
(v) For any $t>0$ and $\xi\in\bR^d$,
\begin{gather} \label{Mittag repre}
\cF_d(q^\gamma_{\alpha,\beta})(t,\xi) =-t^{\alpha-\beta} \phi(|\xi|)^\gamma  E_{\alpha,1-\beta+\alpha}(-t^\alpha\phi(|\xi|^2)),
\\
\label{eqn 09.13.13:08}
\cF_d(q_{\alpha,\beta})(t,\xi) =t^{\alpha-\beta} E_{\alpha,1-\beta+\alpha}(-t^\alpha\phi(|\xi|^2)),
\end{gather}
where $E_{\alpha,\beta}$ is the two-parameter Mittag-Leffler function defined as
\begin{equation*}
E_{\alpha,\beta}(z)=\sum_{k=0}^\infty \frac{z^k}{\Gamma(\alpha k+\beta)}, \quad z\in\bC, \alpha>0, \beta\in\bC.
\end{equation*}

\end{lem}

Next, we introduce the representation formula of the solution. For the rest of this section we assume
$$
\alpha\in (0,1), \quad 1/2<\beta<\alpha+1/2.
$$

\begin{lem} \label{lem:solution representation}
For given $g\in\mathbb{H}_{0}^{\infty}(T,l_{2})$, 
define
\begin{align} \label{sto sol re}
u(t,x) :=\sum_{k=1}^{\infty}\int_{0}^{t}\int_{\mathbb{R}^{d}}q_{\alpha,\beta}(t-s,x-y)g^{k}(s,y)dydw_{s}^{k}.
\end{align}
Then $u\in \bH^{\phi,2}_{p}(T)$  and satisfies \eqref{eqn 4.10}  in the sense of distributions
(see Definition  \ref{defn 1}).
\end{lem}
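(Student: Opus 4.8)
The plan is to verify the two claims in turn: first that $u$ defined by \eqref{sto sol re} belongs to $\bH^{\phi,2}_{p}(T)$, and second that it satisfies \eqref{eqn 4.10} in the sense of Definition~\ref{defn 1}. For the membership claim I would fix a constant $c$ and study $\phi(\Delta)^{c/2}u$ pointwise. Since $g\in\mathbb{H}_{0}^{\infty}(T,l_2)$ is a finite sum of terms $1_{(\tau_{i-1},\tau_i]}(t)g^{ik}(x)$ with $g^{ik}\in C^\infty_c$, all the stochastic integrals are genuine It\^o integrals and $\phi(\Delta)^{c/2}$ can be moved inside the space integral by differentiating the kernel; the needed integrability of $D^m_x q^{\gamma}_{\alpha,\beta}$ and of $q_{\alpha,\beta}$ comes from Lemma~\ref{prop:kernel esti. of q}(iii)--(iv). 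Then by the Burkholder--Davis--Gundy inequality, for each $(t,x)$,
\begin{equation*}
\bE\big|\phi(\Delta)^{c/2}u(t,x)\big|^p\leq C(p)\,\bE\left(\int_0^t\Big|\textstyle\int_{\bR^d}\phi(\Delta)^{c/2}q_{\alpha,\beta}(t-s,x-y)\,g(s,y)\,dy\Big|_{l_2}^2\,ds\right)^{p/2}.
\end{equation*}
Choosing $c=2$, one can take $\gamma$ close to $1$ in Lemma~\ref{prop:kernel esti. of q} and use the $L_1$-bound \eqref{int of q^gamma}, which gives $\int_{\bR^d}|q^{\gamma}_{\alpha,\beta}(r,\cdot)|\,dx\leq Cr^{(1-\gamma)\alpha-\beta}$, an integrable singularity in $r$ near $0$ precisely under the standing assumption $\beta<\alpha+1/2$ (so that $(1-\gamma)\alpha-\beta>-1$ for $\gamma$ near $1$, using $c_1=2-c_0>0$). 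A Minkowski / Young-type argument in the $s$-variable then yields a bound for $\bE\|\phi(\Delta)u\|^p_{L_p((0,T)\times\bR^d)}$ in terms of $\bE\||g|_{l_2}\|^p_{L_p((0,T)\times\bR^d)}$, and combined with Lemma~\ref{H_p^phi,gamma space}(iv) this shows $u\in\bH^{\phi,2}_p(T)$. (Strictly, the full a~priori estimate \eqref{a priori} of this section will give this more sharply, but for the present lemma only finiteness is required, which is easier.)

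For the second claim I would check \eqref{eq:solution space 2} with $f=\phi(\Delta)u$ and $u_0=0$, i.e.\ that for every $\varphi\in\cS(\bR^d)$,
\begin{equation*}
(u(t),\varphi)=I^{\alpha}_t(\phi(\Delta)u(t),\varphi)+\partial^{\beta-\alpha}_t\sum_{k=1}^\infty\int_0^t(g^k(s),\varphi)\,dw^k_s
\end{equation*}
holds a.e.\ on $\Omega\times[0,T]$. The cleanest route is the Fourier transform in $x$: since $q_{\alpha,\beta}$ has the explicit Fourier symbol \eqref{eqn 09.13.13:08}, namely $\cF_d(q_{\alpha,\beta})(t,\xi)=t^{\alpha-\beta}E_{\alpha,1-\beta+\alpha}(-t^\alpha\phi(|\xi|^2))$, one reduces to a deterministic identity for the scalar Mittag-Leffler kernel $G(t,\xi):=t^{\alpha-\beta}E_{\alpha,1-\beta+\alpha}(-t^\alpha\phi(|\xi|^2))$. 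The key facts are the two Laplace/fractional-calculus identities: $I^{\alpha}_t\big(-\phi(|\xi|^2)G(\cdot,\xi)\big)(t)=G(t,\xi)-\frac{t^{\alpha-\beta}}{\Gamma(1+\alpha-\beta)}$ (when $\beta<\alpha$, with the obvious interpretation via $D^{\beta-\alpha}_t$ otherwise) and $\partial^{\beta-\alpha}_t\big(\frac{t^{\alpha-\beta}}{\Gamma(1+\alpha-\beta)}\big)=1$ for $t>0$; these encode that $G$ solves $\partial^\alpha_t G=-\phi(|\xi|^2)G+\partial^\beta_t\,t$ with zero initial value, which is exactly the Fourier transform of \eqref{eqn 4.10} with $g\equiv 1$. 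Plugging a single simple integrand $g^k(s,x)=1_{(\tau_{i-1},\tau_i]}(s)g^{ik}(x)$ into \eqref{sto sol re} and applying the stochastic Fubini theorem (legitimate because of the kernel bounds above and the boundedness of the stopping times), one writes $(u(t),\varphi)=\sum_k\int_0^t\big(\widehat{q_{\alpha,\beta}}(t-s,\cdot)\widehat{g^k}(s,\cdot),\widehat\varphi\,\big)dw^k_s$ and then applies the convolution identity for $G$ termwise; Lemma~\ref{fracint of dw} justifies interchanging $I^\alpha_t$, $\partial^{\beta-\alpha}_t$ with the infinite sum of stochastic integrals. Summing over the finitely many nonzero $k$ and over $i$ gives the desired identity for general $g\in\mathbb{H}_0^\infty(T,l_2)$.

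The main obstacle, I expect, is the rigorous justification of the stochastic Fubini theorem and of moving the fractional operators $I^\alpha_t$, $D^{\beta-\alpha}_t$ inside both the spatial integral and the (a priori infinite) sum of It\^o integrals, uniformly in the singular kernel $q_{\alpha,\beta}(r,\cdot)$ as $r\downarrow 0$. This is where the sharp bounds of Lemma~\ref{prop:kernel esti. of q}(ii)--(iv)---in particular the integrable time singularity $\int_{\bR^d}|q_{\alpha,\beta}(r,x)|dx\leq Cr^{\alpha-\beta}$ with $\alpha-\beta>-1$---are essential, and where the restriction $\beta<\alpha+1/2$ enters: it is exactly what makes $\int_0^t(t-s)^{-2(\beta-\alpha)}\,ds<\infty$, so that the $L_2$-valued stochastic integral defining $\partial^{\beta-\alpha}_t\int_0^\cdot(g^k,\varphi)dw^k_s$ (via Lemma~\ref{fracint of dw}(iii)) is well defined. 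Once these measurability and integrability points are secured, the remaining computations are the elementary Mittag-Leffler identities recalled above, together with the density of $\mathbb{H}_0^\infty(T,l_2)$ in $\mathbb{H}_p^{\phi,\gamma}(T,l_2)$ if one later wishes to remove the simple-integrand restriction.
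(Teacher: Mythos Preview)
Your approach is essentially the one the paper takes: it defers to \cite[Lemma 4.2]{kim16timefractionalspde} and says the only change is to use the Fourier/Mittag--Leffler identities \eqref{Mittag repre}--\eqref{eqn 09.13.13:08} and \cite[Lemma 4.1]{kim2020nonlocal} in place of the corresponding $\phi(\lambda)=\lambda$ results---exactly the ingredients you identify for the verification of \eqref{eq:solution space 2}.

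One small correction to the membership argument: taking $\gamma$ ``close to $1$'' in Lemma~\ref{prop:kernel esti. of q} does not work as stated, since after BDG you need $\int_0^t (t-s)^{2((1-\gamma)\alpha-\beta)}\,ds<\infty$, which forces $\gamma<c_1/2$ rather than $\gamma\approx 1$. The clean fix (and the one implicit in the cited reference) is simply to put \emph{all} of $\phi(\Delta)$ onto the smooth function $g$---since $g\in\bH_0^\infty(T,l_2)$, $\phi(\Delta)g^k\in C_c^\infty$---and then use only the $L_1$ bound \eqref{int of q} for $q_{\alpha,\beta}$, which gives the integrable singularity $(t-s)^{2(\alpha-\beta)}$ exactly under $\beta<\alpha+1/2$. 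This is easier than splitting $\phi(\Delta)=\phi(\Delta)^\gamma\phi(\Delta)^{1-\gamma}$ and suffices for the finiteness you need here.
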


\begin{proof}
The proof is almost the same as that of \cite[Lemma 4.2]{kim16timefractionalspde} which treats the case $\phi(\lambda)=\lambda$. The only difference is that we need to use \eqref{Mittag repre}, \eqref{eqn 09.13.13:08} and \cite[lemma 4.1]{kim2020nonlocal} in place of corresponding results when $\phi(\lambda)=\lambda$.
\end{proof}

Denote $c_1:=2-(2\beta-1)/{\alpha}$ and 
$$
T_{t-s}^{\alpha,\beta}h(x):=\int_{\mathbb{R}^{d}}q^{c_1/2}_{\alpha,\beta}(t-s,x-y) h(y)dy, \qquad h\in C^{\infty}_c(\bR^d).
$$
This is well defined due to   Lemma \ref{prop:kernel esti. of q} (iv).  It is also easy to check (cf. Remark \ref{Hvaluedconti} (ii))
\begin{eqnarray*}
T_{t-s}^{\alpha,\beta}h(x)&=&\int_{\mathbb{R}^{d}}q_{\alpha,\beta}(t-s,x-y) \phi(\Delta)^{c_1/2}h(y)dy\\
&=&\phi(\Delta)^{c_1/2}\int_{\mathbb{R}^{d}}q_{\alpha,\beta}(t-s,x-y) h(y)dy.
\end{eqnarray*}
Take the solution $u$ from \eqref{sto sol re}. Then,  by the Burkholder-Davis-Gundy inequality,
\begin{eqnarray} \nonumber
\|\phi(\Delta)^{c_1/2}u\|^p_{\bL_p(T)}
&\leq& C(p) \bE \int_{\bR^{d}} \int_{0}^{T} \left( \int_{0}^{t} \sum_{k=1}^{\infty}|T^{\alpha,\beta}_{t-s}g^{k}(s)(x) |^{2}ds \right)^{p/2} dt dx
\\
&=&C(p) \bE \Big\|  \left( \int_{0}^{t} |T^{\alpha,\beta}_{t-s}g(s)(x) |^{2}_{l_2}ds \right)^{1/2} \Big\|^p_{L_p((0,T)\times \bR^d)}.
\label{bdc}
\end{eqnarray}

Now we estimate the right hand side of \eqref{bdc} in terms of $\|g\|_{\bL_p(T,l_2)}$ under a slightly general setting. Let $H$ be a  Hilbert space. For functions $g\in C_{c}^{\infty}(\mathbb{R}^{d+1};H)$, we define the operator $\cT$ as
\begin{align*}
\mathcal{T}g(t,x)
&:=\left[\int_{-\infty}^{t}\left| T_{t-s}^{\alpha,\beta}g(s,\cdot)(x)\right| _{H}^{2}ds\right]^{1/2},
\end{align*}
where $|\cdot|_H$ denotes the given norm in  $H$. Note that $\cT$ is sublinear since the Minkowski inequality yields
\begin{equation*}
\|f+g\|_{L_2((-\infty,t);H)}\leq \|f\|_{L_2((-\infty,t);H)}+\|g\|_{L_2((-\infty,t);H)}.
\end{equation*}

Note that \eqref{a priori} is a consequence of \eqref{bdc} and Theorem \ref{thm:L-P} below.

\begin{thm}  \label{thm:L-P}
Let $H$ be a separable Hilbert space,  and $T\in(-\infty,\infty]$. Then  for any $g\in C_{c}^{\infty}(\mathbb{R}^{d+1};H)$,
\begin{equation}
\label{est 314}
\int_{\mathbb{R}^{d}}\int_{-\infty}^{T}\left|\mathcal{T}g(t,x)\right|^{p}dtdx\leq C\int_{\mathbb{R}^{d}}\int_{-\infty}^{T}|g(t,x)|_{H}^{p}dtdx,
\end{equation}
where $C=C(\alpha,\beta,d,p,\delta_0,\kappa_0)$. Consequently, the operator
$\mathcal{T}$ is continuously extended to $L_{p}(\mathbb{R}^{d+1};H)$.
\end{thm} 

The proof of the theorem is given later after some preparations. The main strategy is as follows.

\begin{itemize}

\item[1.] First, we control the sharp function of $\cT g$ in terms of maximal function of $g$ (the definitions of the sharp and maximal functions are given below), that is, we prove
$$
(\cT g)^{\sharp}(t,x) \leq C(\bM_t\bM_x|g|^2_{H}(t,x) )^{1/2}, \quad \forall \, (t,x) \quad \text{uniformly on}\, \Omega.
 $$

 \item[2.] Then, we apply Fefferman-Stein inequality and Hardy-Littlewood maximal inequality to obtain \eqref{est 314}.
 \end{itemize}

Recall that $g=(g^1,g^2,\cdots) \in\mathbb{H}_{0}^{\infty}(T,l_{2})$ if $g^{k}=0$ for
all sufficiently large $k$, and each $g^{k}$ is of the type
$$
g^{k}(t,x)=\sum_{i=1}^{n}1_{(\tau_{i-1},\tau_{i}]}(t)g^{ik}(x),
$$
where $0\leq \tau_0 \leq \tau_1 \leq \cdots \leq \tau_n$ are bounded  stopping times, and $g^{ik}\in C_{c}^{\infty}(\mathbb{R}^{d})$.

The following result, Lemma \ref{lem:L2 result}, is a version of Theorem \ref{thm:L-P} for $p=2$.   For the proof,  we use  the following  fact on the Mittag-Leffler function: if $\alpha \in (0,1)$ and $b\in \bR$, then  there exist positive constants  $\varepsilon=\varepsilon(\alpha)$
and $C=C(\alpha,b)$ such that
\begin{equation}
            \label{eqn 8.19.2}
|E_{\alpha,b}(z)|\leq C(1 \wedge  |z|^{-1}) \quad \text{ if } \quad \pi-\varepsilon \leq |\arg (z)|\leq \pi
\end{equation}
(see e.g. \cite[Theorem 1.6]{podlubny1998fractional}).

\begin{lem}
                        \label{lem:L2 result}
For any $T\in (-\infty, \infty]$ and $g\in C_{c}^{\infty}(\mathbb{R}^{d+1};H)$,
\begin{equation}
             \label{eqn 8.19.1}
\int_{\mathbb{R}^{d}}\int_{-\infty}^{T}|\mathcal{T}g(t,x)|^{2}dtdx\leq C \int_{\mathbb{R}^{d}}\int_{-\infty}^{T}\left| g(t,x)\right| _{H}^{2}dtdx,
\end{equation}
where $C=C(\alpha,\beta,d)$ is independent of $T$.
 \end{lem}
\begin{proof}
 We follow the proof of \cite[Lemma 3.5]{kim16timefractionalspde}.

{\bf{Step 1}}.  First, assume $g(t,x)=0$ for $t \leq 0$.  In this case we may further assume $T>0$ since the left hand side of \eqref{eqn 8.19.1} is zero if $T\leq 0$. Since  $g(t,x)=\cT g(t,x)=0$ for $t \leq 0$, by Parseval's identity,
  \begin{align}
 & \int_{\mathbb{R}^{d}}\int_{-\infty}^{T}|\mathcal{T}g(t,x)|^{2}dtdx  \nonumber \\
 & =\int_{0}^{T}\int_{0}^{t}\int_{\bR^d}\phi(|\xi|^{2})^{c_1}\left|\hat{q}_{\alpha,\beta}(t-s,\cdot) (\xi)\right|^{2}\left| \hat{g}(s,\xi)\right| _{H}^{2}d\xi dsdt  \nonumber
 \\
 & = \int_{\bR^d} \int_{0}^{T}\int_{s}^{T} \phi(|\xi|^{2})^{c_1}\left|\hat{q}_{\alpha,\beta}(t-s,\cdot) (\xi)\right|^{2}\left| \hat{g}(s,\xi)\right| _{H}^{2} dtds d\xi \nonumber
 \\
 & = \int_{\bR^d} \int_{0}^{T}\int_{0}^{T-s} \phi(|\xi|^{2})^{c_1}\left|\hat{q}_{\alpha,\beta}(t,\cdot) (\xi)\right|^{2}\left| \hat{g}(s,\xi)\right| _{H}^{2} dtds d\xi.  \label{eqn 4.12.1}
\end{align}
By Lemma \ref{prop:kernel esti. of q} (v) and \eqref{eqn 8.19.2} (recall $\beta>1/2$), for $0<s<T$,
\begin{align*}
&\int_{0}^{T-s} \phi(|\xi|^{2})^{c_1}\left|\hat{q}_{\alpha,\beta}(t,\cdot) (\xi)\right|^{2} dt\\
 & \leq \phi(|\xi|^{2})^{c_1}\int^{T}_0\left|t^{\alpha-\beta}E_{\alpha,1-\beta+\alpha}(-\phi(|\xi|^{2})t^{\alpha})\right|^{2}dt
 \\
 & \leq C \phi(|\xi|^{2})^{c_1} \int_{0}^{\phi(|\xi|^{2})^{-\frac{1}{\alpha}}} \hspace{-1.5mm} t^{2(\alpha-\beta)} dt
 +C 1_{A}(\xi) \phi(|\xi|^{2})^{c_1} \int_{\phi(|\xi|^{2})^{-\frac{1}{\alpha}}}^{T} \left|\frac{t^{\alpha-\beta}}{\phi(|\xi|^{2})t^{\alpha}}\right|^{2} dt
 \\
 &\leq C \phi(|\xi|^{2})^{(c_1-2+\frac{2\beta-1}{\alpha})}+C\phi(|\xi|^2)^{c_1-2}\phi(|\xi|^{2})^{(\frac{2\beta-1}{\alpha})}\leq C,
\end{align*}
where $A=\{\xi:\phi(|\xi|^2)\geq T^{-\alpha}\}$.
Thus,  \eqref{eqn 4.12.1} and  Parseval's identity yield
\begin{align*}
\int_{\mathbb{R}^{d}}\int_{-\infty}^{T}|\mathcal{T}g(t,x)|^{2}dtdx \leq C \int_{0}^{T}\int_{\mathbb{R}^{d}}\left| g(t,x)\right|_{H}^{2}dxdt,
\end{align*} 
and \eqref{eqn 8.19.1} holds for all $T>0$ with a constant independent of $T$. It follows that
 \eqref{eqn 8.19.1} also holds for $T= \infty$.

{\bf{Step 2}}. General case. Take $a\in \bR$ so that $g(t,x)=0$ for $t\leq a$. Then obviously, for $\bar{g}(t,x):=g(t+a,x)$ we have $\bar{g}(t)=0$ for $t\leq 0$. Also note that
\begin{align*}
\left(\mathcal{T}g(t+a)\right)^{2}&=\left(\int_{-\infty}^{t+a}  \left|  \int_{\bR^{d}}q^{c_{1}/2}_{\alpha,\beta}(t+a-s,x-y)g(s,y) dy \right|^{2}_H ds \right)
\\
&=\left(\int_{-\infty}^{t} \left| \int_{\bR^{d}}q^{c_{1}/2}_{\alpha,\beta}(t-s,x-y)\bar{g}(s,y) dy \right|^{2}_H ds \right)
\\
&=\left(\mathcal{T}\bar{g}(t)\right)^{2}.
\end{align*} 
Thus it is enough to apply the result of Step 1 with $\bar{g}$ and $T-a$ in place of $g$ and $T$ respectively. The lemma is proved.
\end{proof}

For $b>0$ and $(t,x)\in\bR^{d+1}$, we define 
\begin{equation*}
\kappa(b):=\left(\phi(b^{-2})\right)^{-1/\alpha},\quad B_b(x)=\{y\in\bR^d : |y-x|<b\},
\end{equation*}
and 
\begin{equation*}
I_b(t)=(t-\kappa(b),t), \quad Q_b(t,x)=I_b(t)\times {B}_b(x).
\end{equation*}
We also  denote
\begin{equation*}
I_b=I_b(0),  \quad B_b=B_b(0), \quad Q_b=Q_b(0,0).
\end{equation*}

For measurable functions $h(t,x)$ on $\mathbb{R}^{d+1},$ we define the sharp function
\begin{equation*}
h^{\#}(t,x)=\sup_{(t,x)\in Q}\aint_{Q}\left|h(r,z)-h_{Q}\right|drdz,
\end{equation*}
where
$$
h_{Q}=\aint_{Q}h(s,y)dyds,
$$
and the supremum is taken over all $Q\subset\mathbb{R}^{d+1}$  of the form $Q_b$ containing
$(t,x)$.

For functions $h$ on $\bR^d$ we define the maximal function
\begin{equation*}
\bM_xh(x):=\sup_{x\in B_r(z)}\frac{1}{|B_{r}(z)|}\int_{B_{r}(x)}|h(y)|dy=\sup_{x\in B_r(z)}\aint_{B_{r}(z)}|h(y)|dy.
\end{equation*}
We also use the notation $\bM_th(t)$ when $d=1$  for functions depending on $t$. 
For measurable functions $h(t,x)$ set
$$
\bM_xh(t,x)=\bM_x\left(h(t,\cdot)\right)(x),\quad\bM_{t}h(t,x)=\bM_{t}\left(h(\cdot,x)\right)(t),
$$
and
$$
\bM_t \bM_x h(t,x)=\bM_t\left(\bM_xh(\cdot,x)\right)(t).
$$

Below  we record some  useful computations which are often used in the rest of this section.

\begin{lem} \label{lem 09.05.17:48}

(i) Let $f$ be a nonnegative integrable function on $\bR$. Assume there exists $a>0$ such that $f(t)=0$ if $t>-2a$. Then for any  $\nu>1$ and $t\in(-a,0)$,
\begin{align}\label{f(s-r) 200413}
\int_{-\infty}^{-2a} \int_{-a-r}^{-r}  f(r) s^{-\nu} dsdr &= \int_{-\infty}^{-2a} \int_{-a}^0  f(r)(s-r)^{-\nu} dsdr \nonumber
\\
&\leq C(\nu) a^{2-\nu}\bM_t f(t).
\end{align}

(ii) For positive real numbers $\nu, \theta$ and $r$, define
\begin{equation}
\label{G}
G_{\nu,\theta}(\rho):=\frac{\phi(\rho^{-2})^{\nu}}{\rho^{\theta}}, \qquad \rho>0
\end{equation}
and
\begin{equation}
\label{H}
H_{\nu,\theta}(r,\rho):= \int_{(\phi(\rho^{-2}))^{-1}}^{2r^\alpha} (\phi^{-1}(l^{-1}))^{\theta/2} l^{-\nu}  dl, \qquad r^{\alpha}\phi(\rho^{-2})\geq 1.
\end{equation}
Then we have
\begin{gather}
\left|\frac{d}{d\rho}G_{\nu,\theta}(\rho) \right| \leq C(\nu,\theta) G_{\nu,\theta+1}(\rho), \qquad \rho>0, \label{eqn 09.05.17:21}
\\
\left|\frac{d}{d\rho}H_{\nu,\theta}(r,\rho) \right| \leq C(\alpha,\beta,d,\delta_{0},\kappa_0, \nu,\theta) H_{\nu,\theta+1}(r,\rho), \qquad r^{\alpha}\phi(\rho^{-2})\geq 1.  \label{eqn 09.05.17:21-2}
\end{gather}
\end{lem}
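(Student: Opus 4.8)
The plan is to prove each of the three displays by a direct but careful computation, using Assumption \ref{ass bernstein} (in the form \eqref{phiratio}) as the only structural input on $\phi$.

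For part (i), the equality of the two double integrals is just the change of variables $s \mapsto s - r$ (so that $[-a-r,-r] \mapsto [-a,0]$), which is elementary. For the inequality, I would first observe that since $f(r) = 0$ for $r > -2a$, the outer integral runs over $r \in (-\infty, -2a]$, and for such $r$ and $s \in (-a, 0)$ we have $s - r \geq -a - r \geq a > 0$, so the kernel $(s-r)^{-\nu}$ is bounded. More precisely I would split: integrate first in $s$ over $(-a,0)$, giving $\int_{-a}^0 (s-r)^{-\nu}\,ds \leq C(\nu)\,a\,(-r)^{-\nu}$ for $r \leq -2a$ (here one uses $s - r \sim -r$ on this range since $|s| \leq a \leq -r/2$), reducing the claim to $\int_{-\infty}^{-2a} f(r)(-r)^{-\nu}\,dr \leq C a^{1-\nu}\bM_t f(t)$. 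This last bound is the standard estimate controlling a tail integral against a decreasing kernel by the maximal function: decompose the range $(-\infty,-2a]$ dyadically into $(-2^{j+1}a, -2^{j}a]$ for $j \geq 1$, bound $(-r)^{-\nu} \leq (2^j a)^{-\nu}$ there, bound $\int_{-2^{j+1}a}^{-2^j a} f \leq \int_{-2^{j+1}a}^{0} f \leq 2^{j+1}a\,\bM_t f(t)$ (valid because $t \in (-a,0)$ so $(-2^{j+1}a, 0)$ is an interval around $t$ of comparable length), and sum the geometric series $\sum_j 2^j a (2^j a)^{-\nu} = a^{1-\nu}\sum_j 2^{j(1-\nu)}$, which converges since $\nu > 1$. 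Combining the two steps gives the factor $a^{2-\nu}$.

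For part (ii), the bound \eqref{eqn 09.05.17:21} on $G_{\nu,\theta}$ is a one-line computation: $\frac{d}{d\rho}G_{\nu,\theta}(\rho) = -\theta \rho^{-\theta-1}\phi(\rho^{-2})^\nu + \rho^{-\theta}\cdot \nu \phi(\rho^{-2})^{\nu-1}\cdot\phi'(\rho^{-2})\cdot(-2\rho^{-3})$, and applying \eqref{eqn 07.19.14.35} with $n=1$ to the factor $\rho^{-2}\phi'(\rho^{-2}) \leq C\phi(\rho^{-2})$ shows both terms are bounded by $C\,\rho^{-\theta-1}\phi(\rho^{-2})^\nu = C\,G_{\nu,\theta+1}(\rho)$. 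For \eqref{eqn 09.05.17:21-2}, I differentiate $H_{\nu,\theta}(r,\rho)$ in $\rho$; only the lower limit $(\phi(\rho^{-2}))^{-1}$ depends on $\rho$, so by the fundamental theorem of calculus $\frac{d}{d\rho}H_{\nu,\theta}(r,\rho) = -\big(\phi^{-1}((\phi(\rho^{-2}))^{-1}\cdot\text{reciprocal})\big)^{\theta/2}\cdots$; more cleanly, writing $\ell_0(\rho) := (\phi(\rho^{-2}))^{-1}$ we get $\frac{d}{d\rho}H_{\nu,\theta} = -\big(\phi^{-1}(\ell_0^{-1})\big)^{\theta/2}\ell_0^{-\nu}\cdot \ell_0'(\rho)$, and $\phi^{-1}(\ell_0^{-1}) = \phi^{-1}(\phi(\rho^{-2})) = \rho^{-2}$, while $\ell_0'(\rho) = \phi(\rho^{-2})^{-2}\cdot\phi'(\rho^{-2})\cdot 2\rho^{-3}$, so $|\ell_0'(\rho)| \leq C\rho^{-1}\phi(\rho^{-2})^{-1}$ again by \eqref{eqn 07.19.14.35}. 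Thus $\big|\frac{d}{d\rho}H_{\nu,\theta}\big| \leq C\,\rho^{-\theta}\phi(\rho^{-2})^{\nu}\cdot\rho^{-1}\phi(\rho^{-2})^{-1} = C\,\rho^{-\theta-1}\phi(\rho^{-2})^{\nu-1}$. It remains to see that this is bounded by $C\,H_{\nu,\theta+1}(r,\rho)$; I would do this by noting $H_{\nu,\theta+1}(r,\rho) \geq \int_{\ell_0(\rho)}^{2\ell_0(\rho)}(\phi^{-1}(\ell^{-1}))^{(\theta+1)/2}\ell^{-\nu}\,d\ell$ (valid since $2\ell_0(\rho) \leq 2r^\alpha$ by the standing assumption $r^\alpha\phi(\rho^{-2}) \geq 1$), and on $[\ell_0, 2\ell_0]$ one has $\phi^{-1}(\ell^{-1}) \sim \phi^{-1}(\ell_0^{-1}) = \rho^{-2}$ by \eqref{phiratio}, so this lower bound is $\gtrsim (\rho^{-2})^{(\theta+1)/2}\ell_0^{-\nu}\cdot\ell_0 = \rho^{-\theta-1}\phi(\rho^{-2})^{\nu-1}$, matching the upper bound we derived.

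The main obstacle is the bookkeeping in \eqref{eqn 09.05.17:21-2}: one must correctly differentiate the integral with a $\rho$-dependent limit, simplify $\phi^{-1}\circ\phi$, and — crucially — produce the matching \emph{lower} bound for $H_{\nu,\theta+1}$ by restricting its integration interval and invoking \eqref{phiratio} to compare $\phi^{-1}(\ell^{-1})$ with $\rho^{-2}$; the constant here genuinely depends on $\delta_0,\kappa_0$ (and on $\alpha$ through the relation between $r^\alpha$ and $\ell_0$). Parts (i) and the $G$-estimate are routine by comparison.
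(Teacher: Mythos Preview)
Your proposal is correct. For part (ii) your argument is essentially identical to the paper's: both differentiate directly, invoke \eqref{eqn 07.19.14.35} to control $\rho^{-2}\phi'(\rho^{-2})$, arrive at $|\partial_\rho H_{\nu,\theta}|\leq C\rho^{-\theta-1}\phi(\rho^{-2})^{\nu-1}$, and then bound this from above by $H_{\nu,\theta+1}$ by restricting the integration to $[\ell_0,2\ell_0]$ and using \eqref{phiratio}.

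For part (i) the two proofs take genuinely different routes. The paper first evaluates the inner integral exactly as $\frac{1}{\nu-1}\big((-a-r)^{1-\nu}-(-r)^{1-\nu}\big)$, then integrates by parts in $r$ to replace $f(r)$ by the cumulative integral $F(r)=\int_r^0 f$, bounds $F(r)\leq (-r)\,\bM_t f(t)$, and uses the mean-value estimate $(-a-r)^{-\nu}-(-r)^{-\nu}\leq \nu a(-a-r)^{-\nu-1}$ to reduce to a convergent integral. You instead crudely bound the inner integral by $C a(-r)^{-\nu}$ (via $s-r\sim -r$) and then run a dyadic decomposition of $(-\infty,-2a]$ together with $\int_{-2^{j+1}a}^0 f\leq 2^{j+1}a\,\bM_t f(t)$ and a geometric sum. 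Your approach is slightly more elementary and avoids the integration-by-parts trick; the paper's approach is a bit sharper in that it uses the exact inner integral before estimating, but both yield the same $C(\nu)a^{2-\nu}$ outcome with comparable effort.
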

\begin{proof}

(i) Using  integration by parts, we get
\begin{align*}
&\int_{-\infty}^{-2a} \int_{-a}^0 f(r)(s-r)^{-\nu} dsdr \\
&= \frac{1}{\nu-1} \int_{-\infty}^{-2a} f(r) \left((-a-r)^{1-\nu}-(-r)^{1-\nu}\right) dr
\\
&\leq C(\nu)\int_{-\infty}^{-2a} \left(\int_{r}^0 f(\tilde{r})d\tilde{r} \right) \left((-a-r)^{-\nu}-(-r)^{-\nu}\right)dr.
\end{align*}
Since $0<p<q$ implies $p^{-\nu}-q^{-\nu}\leq \nu(q-p)p^{-\nu-1}$,  we have for  $r<-2a<0$,
$$
(-a-r)^{-\nu}-(-r)^{-\nu} \leq \nu a(-a-r)^{-\nu-1}.
$$
Therefore,
\begin{align*}
\int_{-\infty}^{-2a} \int_{-a}^0 f(r)(s-r)^{-\nu} dsdr &\leq C(\nu) a\int_{-\infty}^{-2a} \left(\int_{r}^0 f(\tilde{r})d\tilde{r} \right) (-a-r)^{-\nu-1} dr
\\
&\leq C(\nu)a\bM_t f(t)\int_{-\infty}^{-2a} -r(-r/2)^{-\nu-1} dr 
\\
&\leq C(\nu) a^{2-\nu}\bM_t f(t).
\end{align*}

(ii) By \eqref{eqn 07.19.14.35}, we have
\begin{align*}
\left|\frac{d}{d\rho} G_{\nu,\theta}(\rho)\right|&=\left|-\theta\frac{\phi(\rho^{-2})^{\nu}}{\rho^{\theta+1}}-2\nu\frac{\phi(\rho^{-2})^{\nu-1}}{\rho^{\theta}}\phi '(\rho^{-2})\rho^{-3}\right|
\\
&\leq C(\nu,\theta) \frac{\phi(\rho^{-2})^{\nu}}{\rho^{\theta+1}}=C(\nu,\theta) G_{\nu,\theta+1}(\rho).
\end{align*}
Thus \eqref{eqn 09.05.17:21} is proved. Now we prove \eqref{eqn 09.05.17:21-2}. For $0<v<r^{\alpha}$, define
$$
\mathcal{H}_{\nu,\theta}(r,v)= \int_{v}^{2r^\alpha} (\phi^{-1}(l^{-1}))^{\theta/2} l^{-\nu}  dl.
$$
Applying the fundamental theorem of calculus and using \eqref{eqn 07.19.14.35},
\begin{align*}
\left|\frac{d}{d\rho}H_{\nu,\theta}(r,\rho) \right|&= C \left|\frac{d}{dv}\mathcal{H}_{\nu,\theta}(r,v)\big|_{v=(\phi(\rho^{-2}))^{-1}} (\phi(\rho^{-2}))^{-2}\phi'(\rho^{-2})\rho^{-3}\right|
\\
& \leq C \frac{\phi(\rho^{-2})^{\nu}}{\rho^{\theta}} (\phi(\rho^{-2}))^{-1}\rho^{-1}=C  \frac{\phi(\rho^{-2})^{\nu-1}}{\rho^{\theta+1}}.
\end{align*}
By \eqref{e:H} with $R=\phi^{-1}(2l^{-1})$ and $r=\rho^{-2}$,
\begin{align*}
 \frac{\phi(\rho^{-2})^{\nu-1}}{\rho^{\theta+1}} &=\int_{(\phi(\rho^{-2}))^{-1}}^{2(\phi(\rho^{-2}))^{-1}}  \frac{\phi(\rho^{-2})^{\nu}}{\rho^{\theta+1}}  dl \nonumber
\\
&\leq C \int_{(\phi(\rho^{-2}))^{-1}}^{2(\phi(\rho^{-2}))^{-1}} (\phi^{-1}(l^{-1}))^{(\theta+1)/2} l^{-\nu}dr \nonumber
\\
&\leq C \int_{(\phi(\rho^{-2}))^{-1}}^{2r^{\alpha}} (\phi^{-1}(l^{-1}))^{(\theta+1)/2} l^{-\nu} dl
\\
&=C H_{\nu,\theta+1}(r,\rho).
\end{align*}
Therefore, we have \eqref{eqn 09.05.17:21-2}, and the lemma is proved.
\end{proof}

We will also frequently use the following version of integration by parts formula:  if $F$ and $G$ are smooth enough, then for any $0<\varepsilon <R<\infty$,
\begin{align}
\int_{\epsilon\leq|z|\leq R}F(z)G(|z|)dz & =-\int_{\epsilon}^{R}G'(\rho)\left[\int_{|z|\leq\rho}F(z)dz\right]d\rho\nonumber \\
 & \qquad+G(R)\int_{|z|\leq R}F(z)dz-G(\epsilon)\int_{|z|\leq\epsilon}F(z)dz.
                    \label{eq:integration by parts}
\end{align}
This is easily obtained  using the relations
\begin{align*}
\int_{\varepsilon \leq |z| \leq R} F(z) G(|z|)\,dz
&=\int_\varepsilon^R G(\rho)\left(\int_{\partial B_\rho(0)} F(s) \, dS_{\rho} \right) d\rho
\\
&=\int_\varepsilon^R G(\rho)\frac{d}{d\rho} \left(\int_{B_\rho(0)}F(z)\, dz \right)d\rho.
\end{align*}
and  applying standard integration by parts formula to the last term above.

\vspace{1mm}

In the following lemmas, Lemmas \ref{ininestimate 200316} -- \ref{outouttimeestimate 200414},  we estimate the mean oscillation of $\cT g$ on $Q_b$.  For this, 
we  consider the following two cases

\begin{itemize}

\item  $g$ has support in   $(-3\kappa(b), \infty)\times \bR^d$ (see Lemma  \ref{inwholeestimate 200316}),

 \item  $g$ has support in $(-\infty, -2\kappa(b))\times \bR^d$.

\end{itemize}

The second case above is further divided into the  cases

\begin{itemize}

\item  $g$ has support in  $(-\infty,-2\kappa(b))\times B_{3b}$  (see Lemma \ref{outinestimate 200414}),

\item  $g$ has  support in  $(-\infty,-2\kappa(b))\times B_{2b}^c$ (see Lemmas  \ref{outoutspaceestimate 200414} and  \ref{outouttimeestimate 200414}).
\end{itemize}

Note that, by Jensen's inequality,  for any $c\in \bR$ and function $h$,
\begin{eqnarray}
                \nonumber
 \left( \aint_{Q}\left|h(r,z)-h_{Q}\right|drdz\right)^2
&\leq& \aint_{Q}\left|h(r,z)-h_{Q}\right|^2drdz  \\ 
&=&\aint_{Q}\left|\aint_Q(h(r,z)-h(s,y))dsdy\right|^2drdz   \label{type1} \\
&\leq&4 \aint_Q |h(r,z)-c|^2 drdz.      \label{4times}
\end{eqnarray}
We will consider type \eqref{type1}  and type \eqref{4times} below.

\begin{lem} \label{ininestimate 200316}
Let $g\in C_{c}^{\infty}(\mathbb{R}^{d+1};H)$  have a support in $(-3\kappa(b),3\kappa(b))\times B_{3b}$.
Then for any $(t,x)\in Q_{b}$,
$$
\aint_{Q_{b}}\left|\mathcal{T}g(s,y)\right|^{2}dsdy\leq C(\alpha,\beta,d)\bM_{t}\bM_x\left| g\right| _{H}^{2}(t,x),
$$
where $C=C(\alpha,\beta,d)$.
\end{lem}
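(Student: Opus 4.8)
The plan is to deduce the estimate directly from the $L_2$-boundedness of $\mathcal{T}$ proved in Lemma \ref{lem:L2 result}, and then absorb all constants by comparing the relevant cubes. Since $g\in C_c^\infty(\bR^{d+1};H)$, the function $\mathcal{T}g$ is well defined, and taking $T=\infty$ in Lemma \ref{lem:L2 result} gives
\[
\int_{\bR^{d+1}}|\mathcal{T}g(s,y)|^2\,dsdy\leq C(\alpha,\beta,d)\int_{\bR^{d+1}}|g(s,y)|_H^2\,dsdy .
\]
Because $Q_b=(-\kappa(b),0)\times B_b\subset\bR^{d+1}$ and $|Q_b|=c_d\,\kappa(b)\,b^d$ with $c_d=|B_1|$, and because $g$ is supported in $(-3\kappa(b),3\kappa(b))\times B_{3b}$, this yields
\[
\aint_{Q_b}|\mathcal{T}g(s,y)|^2\,dsdy\leq\frac{C}{\kappa(b)\,b^d}\int_{-3\kappa(b)}^{3\kappa(b)}\int_{B_{3b}}|g(s,y)|_H^2\,dyds .
\]
So everything reduces to bounding the right-hand side by $C\,\kappa(b)\,b^d\,\bM_t\bM_x|g|_H^2(t,x)$ for the given $(t,x)\in Q_b$.

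For the spatial integral I would fix $s$ and use that $x\in B_b$ forces $B_{3b}\subset B_{4b}(x)$; since $x\in B_{4b}(x)$, the definition of $\bM_x$ gives
\[
\int_{B_{3b}}|g(s,y)|_H^2\,dy\leq|B_{4b}(x)|\aint_{B_{4b}(x)}|g(s,y)|_H^2\,dy\leq C(d)\,b^d\,\bM_x|g|_H^2(s,x).
\]
For the time integral I would use that, for $t\in(-\kappa(b),0)$, the interval $I:=(t-4\kappa(b),t+4\kappa(b))$ contains both $t$ and $(-3\kappa(b),3\kappa(b))$, so by the definition of $\bM_t$,
\[
\int_{-3\kappa(b)}^{3\kappa(b)}\bM_x|g|_H^2(s,x)\,ds\leq|I|\aint_I\bM_x|g|_H^2(s,x)\,ds\leq 8\,\kappa(b)\,\bM_t\bM_x|g|_H^2(t,x).
\]
Chaining the last three displays produces the asserted inequality with a constant $C=C(\alpha,\beta,d)$.

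Since the argument only combines the $L_2$ estimate of Lemma \ref{lem:L2 result} with elementary inclusions among the balls $B_{3b}$, $B_{4b}(x)$ and the intervals above, I do not expect a genuine obstacle here; the only care needed is keeping track of which enlarged cube contains the support of $g$. The real work---the pointwise kernel bounds from Lemma \ref{prop:kernel esti. of q} and the integration-by-parts identity \eqref{eq:integration by parts}---is reserved for the companion lemmas, where $g$ is supported far from $Q_b$ in space or in time and the crude $L_2$ bound is too lossy.
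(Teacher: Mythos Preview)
Your proposal is correct and follows essentially the same approach as the paper: apply the $L_2$ bound of Lemma \ref{lem:L2 result}, use the support of $g$ to restrict the right-hand integral to $(-3\kappa(b),3\kappa(b))\times B_{3b}$, and then absorb the spatial and temporal integrals into $\bM_x$ and $\bM_t$ via the elementary inclusions $B_{3b}\subset B_{4b}(x)$ and $(-3\kappa(b),3\kappa(b))\subset(t-4\kappa(b),t+4\kappa(b))$. The paper's write-up is terser but the argument is identical.
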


\begin{proof}
By Lemma \ref{lem:L2 result},
\begin{align*}
\int_{Q_{b}}\left|\mathcal{T}g(s,y)\right|^{2}dsdy &\leq C \int_{(-3\kappa(b),3\kappa(b))\times B_{3b}}\left| g(s,y)\right| _{H}^{2}dyds
\\
&\leq C b^d\int_{-3\kappa(b)}^{3\kappa(b)} \bM_x\left|g\right| _{H}^{2}(s,x)ds
\\
&\leq C\kappa(b)b^d \bM_t \bM_x\left|g\right| _{H}^{2}(t,x).
\end{align*}
This certainly proves the lemma.
\end{proof}

\begin{lem} \label{inwholeestimate 200316}

Let $g\in C_{c}^{\infty}(\mathbb{R}^{d+1};H)$  have a support in $(-3\kappa(b),\infty)\times\bR^d$. Then for any $(t,x)\in Q_{b}$,
$$
\aint_{Q_{b}}\left|\mathcal{T}g(s,y)\right|^{2}dsdy\leq C \bM_{t}\bM_x\left| g\right| _{H}^{2}(t,x),
$$
where $C=C(\alpha,\beta,d,\delta_0,\kappa_0)$.
\end{lem}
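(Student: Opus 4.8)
The idea is to deduce the estimate from Lemma \ref{ininestimate 200316} by splitting $g$ according to the spatial distance from $B_b$. First, for $(s,y)\in Q_b$ we have $s<0$, and $\mathcal{T}g(s,y)$ depends only on the values $g(r,\cdot)$ with $r<s<0$; hence, replacing $g$ by $\psi g$ for a smooth time cutoff $\psi$ with $\psi\equiv 1$ on $(-3\kappa(b),\kappa(b))$ and $\mathrm{supp}\,\psi\subset(-3\kappa(b),3\kappa(b))$ (which leaves $\mathcal{T}g$ unchanged on $Q_b$ and does not increase $\bM_t\bM_x|g|_H^2$), we may assume $g$ is supported in $(-3\kappa(b),3\kappa(b))\times\bR^d$. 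Fix a smooth dyadic partition of unity $1=\sum_{n\ge0}\zeta_n$ on $\bR^d$ with $\zeta_0$ supported in $B_{3b}$ and $\zeta_n$ supported in $\{3\cdot2^{n-1}b<|z|<3\cdot2^{n+1}b\}$ for $n\ge1$, and put $g_n:=\zeta_n g\in C_c^\infty(\bR^{d+1};H)$, a finite sum. By sublinearity of $\mathcal{T}$ and Minkowski's inequality in $L_2(Q_b)$,
\[
\Big(\aint_{Q_b}|\mathcal{T}g|^2\,dsdy\Big)^{1/2}\le \sum_{n\ge0}\Big(\aint_{Q_b}|\mathcal{T}g_n|^2\,dsdy\Big)^{1/2}.
\]
The term $n=0$ is controlled by Lemma \ref{ininestimate 200316} (since $\mathrm{supp}\,g_0\subset(-3\kappa(b),3\kappa(b))\times B_{3b}$ and $\bM_t\bM_x|g_0|_H^2\le\bM_t\bM_x|g|_H^2$), so it remains to prove, for every $n\ge1$,
\[
\aint_{Q_b}|\mathcal{T}g_n|^2\,dsdy\le C\,2^{-2n\delta_0 c_1}\,\bM_t\bM_x|g|_H^2(t,x),
\]
which is summable because $\delta_0 c_1>0$.

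Fix $n\ge1$. For $y\in B_b$ and $z\in\mathrm{supp}\,g_n(r,\cdot)$ one has $2^nb\le|y-z|\le 2^{n+3}b$; since $\rho\mapsto\phi(\rho^{-2})^{c_1/2}\rho^{-d}$ is decreasing, \eqref{qgamma whole est} (with $m=0$, $\gamma=c_1/2$) gives $|q^{c_1/2}_{\alpha,\beta}(s-r,y-z)|\le C\,(s-r)^{\alpha-\beta}K_n$ with $K_n:=\phi((2^nb)^{-2})^{c_1/2}(2^nb)^{-d}$. By the Cauchy--Schwarz inequality in $z$ over the annulus (of measure $\le C(2^nb)^d$),
\[
|T^{\alpha,\beta}_{s-r}g_n(r,\cdot)(y)|_H^2\le C\,(s-r)^{2(\alpha-\beta)}K_n^2(2^nb)^d\int_{\bR^d}|g_n(r,z)|_H^2\,dz,
\]
uniformly in $y\in B_b$. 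Integrating over $y\in B_b$, then over $r\in(-3\kappa(b),s)$ and $s\in(-\kappa(b),0)$ (using Fubini and the bound $\int_{\max(r,-\kappa(b))}^{0}(s-r)^{2(\alpha-\beta)}\,ds\le C\kappa(b)^{2(\alpha-\beta)+1}$, valid since $2(\alpha-\beta)>-1$ as $\beta<\alpha+1/2$), and recalling the identity $2(\alpha-\beta)+1=\alpha c_1$, we obtain
\[
\int_{Q_b}|\mathcal{T}g_n|_H^2\le C\,b^d K_n^2(2^nb)^d\kappa(b)^{\alpha c_1}\int_{-3\kappa(b)}^{0}\int_{\bR^d}|g_n(r,z)|_H^2\,dzdr.
\]

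To finish, use \eqref{e:H} in the form $\phi((2^nb)^{-2})^{c_1}\le \kappa_0^{-c_1}2^{-2n\delta_0 c_1}\phi(b^{-2})^{c_1}$ together with $\kappa(b)^{\alpha}=\phi(b^{-2})^{-1}$ to get $b^dK_n^2(2^nb)^d\kappa(b)^{\alpha c_1}\le C\,2^{-2n\delta_0 c_1}2^{-nd}$; since $\mathrm{supp}\,g_n(r,\cdot)\subset B_{2^{n+3}b}(x)$ for $x\in B_b$ and $|g_n|\le|g|$, one has $\int_{\bR^d}|g_n(r,z)|_H^2\,dz\le C(2^nb)^d\bM_x|g|_H^2(r,x)$; and since $t\in(-\kappa(b),0)\subset(-3\kappa(b),0)$, $\int_{-3\kappa(b)}^{0}\bM_x|g|_H^2(r,x)\,dr\le 3\kappa(b)\,\bM_t\bM_x|g|_H^2(t,x)$. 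Multiplying these three facts gives $\int_{Q_b}|\mathcal{T}g_n|_H^2\le C\,2^{-2n\delta_0 c_1}\,|Q_b|\,\bM_t\bM_x|g|_H^2(t,x)$, which is the claimed bound; summing over $n\ge1$ and adding the $n=0$ term completes the proof.

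The main obstacle is the time variable: when $\beta>\alpha$ the exponent $2(\alpha-\beta)$ is negative, so one cannot reduce to a pointwise estimate of $\mathcal{T}g_n(s,y)$ followed by averaging — the integrable-but-unbounded singularity of $q^{c_1/2}_{\alpha,\beta}(s-r,\cdot)$ at $s=r$ would not then be controlled by $\bM_t\bM_x|g|_H^2(t,x)$ — and it is essential to keep the $L_2$-integration in $(s,y)$ and absorb the singularity through $\int_{\max(r,-\kappa(b))}^{0}(s-r)^{2(\alpha-\beta)}\,ds\le C\kappa(b)^{\alpha c_1}$, which is exactly where $\beta<\alpha+1/2$ (equivalently $\alpha c_1>0$) is used. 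The spatial decay is comparatively routine, coming from \eqref{qgamma whole est} and the scaling inequality \eqref{e:H}, while the singular near-diagonal piece $g_0$ is absorbed by the $L_2$-boundedness of $\mathcal{T}$ built into Lemma \ref{ininestimate 200316}.
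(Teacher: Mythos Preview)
Your proof is correct. Both you and the paper begin with the same time cutoff and invoke Lemma~\ref{ininestimate 200316} for the spatially near part, but you handle the far part differently: you use a dyadic spatial partition $\{\zeta_n\}$, apply Cauchy--Schwarz on each annulus, and sum the resulting geometric series $\sum_n 2^{-n\delta_0 c_1}$ obtained from \eqref{e:H}; the paper instead makes a single cutoff at $|z|=2b$ and, via the radial integration-by-parts identity \eqref{eq:integration by parts} together with \eqref{eqn 09.05.17:21}, converts the tail into the continuous integral $\int_b^\infty \phi(\rho^{-2})^{c_1/2}\rho^{-1}\,d\rho$, which \eqref{phiint} bounds by $\phi(b^{-2})^{c_1/2}$. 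Your dyadic route is a bit more elementary---it avoids \eqref{eq:integration by parts} and \eqref{phiint} and uses only the scaling hypothesis \eqref{e:H} directly---while the paper's continuous version is more compact and immediately reusable in the later lemmas (e.g.\ Lemma~\ref{outoutspaceestimate 200414}) where the same integration-by-parts device appears. The two are standard equivalent ways to exploit the kernel decay \eqref{qgamma whole est}.
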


\begin{proof}
Take $\zeta_0=\zeta_0(t)\in C^{\infty}(\bR)$ such that $0\leq \zeta_0\leq 1$, $\zeta_0(t)=1$ for $t\leq 2\kappa(b)$, and $\zeta_0(t)=0$ for $t\geq 5\kappa(b)/2$.
Note that $\cT g=\cT (g\zeta_0)$ on $Q_b$, and $|g\zeta_0|\leq |g|$ leads to
$$
\bM_{t}\bM_x\left| g\zeta_0\right| _{H}^{2}(t,x) \leq \bM_{t}\bM_x\left| g\right| _{H}^{2}(t,x).
$$
Therefore, it is enough to assume $f(t,x)=0$ if $|t|\geq 3\kappa(b)$.
Take $\zeta\in C_{c}^{\infty}(\mathbb{R}^{d})$ such that $\zeta=1$
in $B_{2b}$ and $\zeta=0$ outside $B_{5b/2}$. Recall that $\cT$ is a sublinear operator, and therefore
$$
\mathcal{T}g\leq\mathcal{T}(\zeta g)+\mathcal{T}( (1-\zeta)g).
$$
Since $\mathcal{T}(\zeta g)$ can be estimated by Lemma \ref{ininestimate 200316},
we may assume that $g(t,x)=0$ if $x\in B_{2b}$.

Observe
that if $(s,y)\in Q_{b}$ and $\rho>b$, then
\begin{equation}\label{eq:2}
|x-y|\leq2b,\quad B_{\rho}(y)\subset B_{2b+\rho}(x)\subset B_{3\rho}(x),
\end{equation}
whereas if $\rho\leq b$ then for $z\in B_{\rho}$,
$|y-z|\leq 2b$ and thus $g(r,y-z)=0$.
Hence, by \eqref{qgamma whole est}, \eqref{eq:integration by parts} and \eqref{eq:2}, for $s>r$,
\begin{align*}
&\left|\int_{\bR^d} q_{\alpha,\beta}^{c_1/2}(s-r,z)  g(r,y-z) dz\right|_{H} \leq \int_{|z|\geq b} |q_{\alpha,\beta}^{c_1/2}(s-r,z)|  |g(r,y-z)|_{H} dz
\\
& \qquad \leq \int_{b}^{\infty} (s-r)^{\alpha-\beta} \frac{d}{d\rho}G_{c_{1}/2,d}(\rho) \left[ \int_{|z|\leq \rho}|g(r,y-z)|_{H} dz\right] d\rho
\\
& \qquad \leq C\int_b^\infty (s-r)^{\alpha-\beta} \frac{d}{d\rho}G_{c_{1}/2,d}(\rho) \left[\int_{B_{3\rho}(x)} |g(r,z)|_H dz \right] d\rho,
\end{align*}
where
$G_{c_{1}/2,d}(\rho)$ is taken from \eqref{G}. Therefore, by \eqref{eqn 09.05.17:21},
\begin{align*}
&\left|\int_{\bR^d} q_{\alpha,\beta}^{c_1/2}(s-r,z)  g(r,y-z) dz\right|_{H}
\\
&\leq C\int_b^\infty (s-r)^{\alpha-\beta} \frac{\phi(\rho^{-2})^{c_1/2}}{\rho^{d+1}} \left[\int_{B_{3\rho}(x)} |g(r,z)|_H dz \right] d\rho
\\
& \leq C \int_b^\infty (s-r)^{\alpha-\beta}\frac{\phi(\rho^{-2})^{c_1/2}}{\rho} \bM_x |g|_H(r,x) d\rho
\\
& \leq C (s-r)^{\alpha-\beta} \phi(b^{-2})^{c_1/2}\bM_x|g|_H(r,x).
\end{align*}
For the last inequality we use \eqref{phiint} with $\phi^{c_{1}/2}$. Since $(\bM_x\left| g\right| _{H})^{2}\leq\bM_x\left| g\right| _{H}^{2}$, we have
\begin{align*}
&\int_{Q_{b}} \left|\mathcal{T}g(s,y)\right|^{2}dsdy =  \int_{Q_{b}}\int_{-3\kappa(b)}^{s}\left|\int_{\bR^d} q_{\alpha,\beta}^{c_1/2}(s-r,z)  g(r,y-z) dz\right|_{H}^2 drdsdy\\
&\leq C\int_{Q_{b}}\int_{-3\kappa(b)}^{s}\left[(s-r)^{2(\alpha-\beta)}\phi(b^{-2})^{c_1}\bM_x\left| g\right| _{H}^{2}(r,x)\right]drdsdy\\
&\leq C\phi(b^{-2})^{c_1}\int_{B_b}\int_{-3\kappa(b)}^0\left(\int_{r}^{0}(s-r)^{2(\alpha-\beta)}ds\right)\bM_x\left| g\right|_{H}^{2}(r,x)drdy
\\
&\leq C\phi(b^{-2})^{c_1}\int_{B_b}\int_{-3\kappa(b)}^0 (-r)^{\alpha c_1}\bM_x\left| g\right|_{H}^{2}(r,x)drdy
\\
&\leq C(\kappa(b))^{\alpha c_1}\phi(b^{-2})^{c_1}
\\
&\quad \times\int_{B_b}\int_{-3\kappa(b)}^0\bM_x\left| g\right| _{H}^{2}(r,x)drdy \leq C \kappa(b)b^d\bM_t \bM_x\left| g\right|_{H}^{2}(t,x).
\end{align*}
The lemma is proved.
\end{proof}

\begin{lem} \label{outinestimate 200414}
Let $g\in C_{c}^{\infty}(\mathbb{R}^{d+1};H)$ have a support in $(-\infty,-2\kappa(b))\times B_{3b}$. Then for any $(t,x)\in Q_{b}$,
$$
\aint_{Q_{b}}\left|\mathcal{T}g(s,y)\right|^{2}dsdy\leq C \bM_{t}\bM_x\left| g\right| _{H}^{2}(t,x),
$$
where $C=C(\alpha,\beta,d,c,\delta_0,\kappa_0)$.
\end{lem}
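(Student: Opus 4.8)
The plan is to mimic the scheme of Lemmas \ref{ininestimate 200316}--\ref{inwholeestimate 200316}, keeping in mind that the time support of $g$ now forces a large gap between $g$ and $Q_b$, so the issue is the \emph{decay} of the kernel $q^{c_1/2}_{\alpha,\beta}(\tau,\cdot)$ as $\tau\to\infty$ rather than its spatial singularity. Fix $(s,y)\in Q_b$. Since $g(r,\cdot)=0$ for $r\ge-2\kappa(b)$, only $r<-2\kappa(b)$ contributes to $\cT g(s,y)$, and then $\tau:=s-r>\kappa(b)$, so $\tau^{\alpha}\phi(b^{-2})>\kappa(b)^{\alpha}\phi(b^{-2})=1$. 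I would split the $r$-integration according to the size of $\tau$: a \emph{recent} range $\kappa(b)<\tau\le 16^{1/\alpha}\kappa(b)$ and a \emph{distant} range $\tau>16^{1/\alpha}\kappa(b)$; using \eqref{phiratio} one checks that in the distant range $\tau^{\alpha}\phi(|w|^{-2})\ge1$ for all $|w|\le 4b$, while in the recent range this holds only below a threshold in $[b,4b]$.

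For the pointwise estimate of $T^{\alpha,\beta}_{\tau}g(r,\cdot)(y)=\int_{B_{3b}}q^{c_1/2}_{\alpha,\beta}(\tau,y-z)g(r,z)\,dz$, note that $y\in B_b$ and $\mathrm{supp}\,g(r,\cdot)\subset B_{3b}$ force $|y-z|<4b$ and $B_{3b}\subset B_{5b}(x)$ for $x\in B_b$. Where $\tau^{\alpha}\phi(|y-z|^{-2})\ge1$ I would use the partial kernel bound \eqref{qgamma partial est} with $m=0$, then the integration-by-parts formula \eqref{eq:integration by parts} applied to the nonnegative function $|g(r,y-\cdot)|_H$ and to the radial profile $H_{c_1/2,d}(\tau,\cdot)$, together with the derivative bound \eqref{eqn 09.05.17:21-2} and the elementary inequality $\int_{|w|\le\rho}|g(r,y-w)|_H\,dw\le C\rho^{d}\bM_x|g|_H(r,x)$; the boundary term at $\rho\to0$ vanishes because $H_{c_1/2,d}(\tau,\varepsilon)\,\varepsilon^{d}\to0$. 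A Fubini rearrangement of the resulting double integral (as in the proof of \eqref{int of q^gamma}) then gives a bound of the form $|T^{\alpha,\beta}_{\tau}g(r,\cdot)(y)|_H\le C\,\Theta(\tau,b)\,\bM_x|g|_H(r,x)$, where $\Theta(\tau,b)\le C\tau^{\alpha c_0/2-\beta}$, but which enjoys strictly better $\tau$-behaviour once the decay of $(\phi^{-1}(l^{-1}))^{d/2}$ inside $H_{c_1/2,d}$ is retained. On the recent range I would simply copy the proof of Lemma \ref{inwholeestimate 200316}: \eqref{qgamma whole est} with \eqref{eqn 09.05.17:21} handles $|w|\ge b$, the partial estimate handles $|w|<b$, and one obtains $|T^{\alpha,\beta}_{\tau}g(r,\cdot)(y)|_H\le C\,\kappa(b)^{\alpha-\beta}\phi(b^{-2})^{c_1/2}\bM_x|g|_H(r,x)$.

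Then I would write $\aint_{Q_b}|\cT g|^{2}=\frac{1}{\kappa(b)|B_b|}\int_{B_b}\int_{I_b}\int_{-\infty}^{-2\kappa(b)}|T^{\alpha,\beta}_{s-r}g(r,\cdot)(y)|_H^{2}\,dr\,ds\,dy$, use $(\bM_x|g|_H)^{2}\le\bM_x|g|_H^{2}$, and treat the two ranges separately. For the recent range the $r$-window has length $\lesssim\kappa(b)$, hence $\int\bM_x|g|_H^{2}(r,x)\,dr\lesssim\kappa(b)\bM_t\bM_x|g|_H^{2}(t,x)$, and the prefactor collapses via $\kappa(b)^{\alpha c_1}\phi(b^{-2})^{c_1}=(\kappa(b)^{\alpha}\phi(b^{-2}))^{c_1}=1$, exactly as in Lemma \ref{inwholeestimate 200316}. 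For the distant range I would insert the bound from the previous step and integrate in $\tau$ (after the $\int_{I_b}ds$ integration) by means of Lemma \ref{lem 09.05.17:48}(i) with $a=\kappa(b)$ and a suitable exponent $\nu>1$; here the arithmetic identity $\alpha c_0=2\beta-1$ shows that the naive exponent $2(\alpha c_0/2-\beta)=-1$ is borderline, so one must use the improved bound on $\Theta$, after which the remaining $b$-dependent factors cancel powers of $\kappa(b)$ by the scaling identity $\kappa(b)^{\alpha}\phi(b^{-2})=1$ (and one last estimate for $b\ge1$ using $\phi(b^{-2})\le\phi(1)$). A dyadic-in-time decomposition of $g$ (with pieces supported where $\tau\sim 2^{j}\kappa(b)$) is a convenient device for organising this last step.

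The hard part is precisely the distant past, $r\ll-\kappa(b)$, i.e.\ $\tau\to\infty$: the whole-space estimate alone gives $|T^{\alpha,\beta}_{\tau}g|\lesssim\tau^{\alpha-\beta}(\cdots)$, which is not time-integrable, and even the sharp $L_1$-bound \eqref{int of q^gamma} only yields $\tau^{\alpha c_0/2-\beta}$, whose square is \emph{exactly} $\tau^{-1}$ because $\alpha c_0=2\beta-1$. The integrability therefore has to be extracted from the additional room in the spatial integration over the fixed ball $B_{4b}$ — the decay of $\phi^{-1}(l^{-1})$ inside $H_{c_1/2,d}$, together with $\beta>1/2$ — and the real technical burden is to make this gain uniform in $b$, so that the constant in the final estimate depends only on $\alpha,\beta,d,\delta_0,\kappa_0$ (and $c_0$) and not on $b$.
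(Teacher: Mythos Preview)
Your diagnosis of the difficulty is exactly right: for $r\ll -\kappa(b)$ the ``naive'' $L_1$-bound on the kernel gives only $\tau^{-1/2}$ pointwise, whose square is not integrable, and the missing decay must come from the spatial restriction $|w|\le 4b$. Your proposed route to extract it, however, has a gap. The ``elementary inequality''
\[
\int_{|w|\le\rho}|g(r,y-w)|_H\,dw\le C\rho^{d}\,\bM_x|g|_H(r,x)
\]
is \emph{not} available for small $\rho$: you only know $x,y\in B_b$, hence $|x-y|<2b$, and for $\rho<b$ the point $x$ need not lie in $B_\rho(y)$, so no ball containing $x$ with radius comparable to $\rho$ covers $B_\rho(y)$. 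The correct bound is $\int_{|w|\le\rho}|g(r,y-w)|_H\,dw\le C(\rho\vee b)^{d}\,\bM_x|g|_H(r,x)$. Feeding this into your integration by parts replaces $\rho^{d}$ by $b^{d}$ on $(0,b)$, and then the $\rho$-integral of $H_{c_1/2,d+1}(\tau,\rho)$ over $(0,b)$ diverges whenever $d\ge c_0$ (in particular for all $d\ge2$): after Fubini the integrand near $l=0$ dominates $l^{-d/2-c_1/2}$, which is not integrable. So the pointwise-in-$y$ scheme, as written, does not close; the same issue contaminates your ``partial estimate handles $|w|<b$'' step in the recent range.

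The paper avoids this entirely by \emph{not} seeking a pointwise bound in $y$. It first integrates in $y\in B_b$ via Minkowski's inequality, using that $g(r,y-\cdot)$ is supported in $B_{4b}$:
\[
\int_{B_b}\Big|\int q^{c_1/2}_{\alpha,\beta}(s-r,w)g(r,y-w)\,dw\Big|_H^{2}dy
\;\le\; Cb^{d}\,\bM_x|g|_H^{2}(r,x)\,\Big(\int_{B_{4b}}\big|q^{c_1/2}_{\alpha,\beta}(s-r,w)\big|\,dw\Big)^{2}.
\]
This reduces the problem to estimating $K(\tau,b):=\int_{B_{4b}}|q^{c_1/2}_{\alpha,\beta}(\tau,w)|\,dw$, where the maximal function no longer interferes. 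For $\tau\gtrsim\kappa(b)$ one inserts \eqref{qgamma partial est} and applies Fubini in $(w,l)$; for small $l$ the volume factor $|\{|w|\le(\phi^{-1}(l^{-1}))^{-1/2}\}|$ exactly cancels $(\phi^{-1}(l^{-1}))^{d/2}$, leaving $l^{-c_1/2}$ which \emph{is} integrable since $c_1<2$. This is precisely the cancellation your $\rho^{d}$ was meant to produce, but achieved without touching the maximal function. One then obtains $K(\tau,b)^{2}$ as a sum of terms with time exponents $-2\beta$, $-2\beta+\alpha(2-d-c_1)$, and (in the borderline $d+c_1=2$ case) $-2\beta+2\alpha\varepsilon$; each is strictly below $-1$ because $\beta>1/2$, so Lemma~\ref{lem 09.05.17:48}(i) finishes the distant range. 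The recent range is handled by the crude bound $K(\tau,b)\le\|q^{c_1/2}_{\alpha,\beta}(\tau,\cdot)\|_{L_1}\le C\tau^{-1/2}$, which over a window of length $\sim\kappa(b)$ gives only a harmless $\log m$.
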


\begin{proof}
By definition of $\cT g$ and  Fubini's theorem, 
\begin{align*}
&\int_{Q_b} |\cT g (s,y)|^2 dsdy
\\
&\leq \int_{-\kappa(b)}^0 \int_{-\infty}^{-2\kappa(b)}  \int_{B_b} \left|\int_{\mathbb{R}^{d}}  q_{\alpha,\beta}^{c_1/2}\left(s-r,z\right) g(r,y-z) dz\right|^{2}_{H} dy dr ds.
\end{align*}
By assumption, for  any $y\in B_b$, the function $g(r,y-\cdot)$ vanishes on $B^c_{4b}$.  Therefore, by Minkowski's inequality, for $s>r$,
\begin{align}
& \int_{B_b} \left|\int_{\mathbb{R}^{d}}  q_{\alpha,\beta}^{c_1/2}\left(s-r,z\right) g(r,y-z) dz\right|^{2}_{H} dy \nonumber
\\
&\leq  \int_{B_b} \left|\int_{B_{4b}} \left| q_{\alpha,\beta}^{c_1/2}\left(s-r,z\right)\right|\left| g(r,y-z)\right| _{H}dz\right|^{2}dy  \nonumber
\\
 & \leq\left(\int_{B_{4b}}\left[\int_{B_b}\left| g(r,y-z)\right|_{H}^{2}dy\right]^{1/2}\left|q_{\alpha,\beta}^{c_1/2}\left(s-r,z\right) \right|dz\right)^{2} \nonumber
\\
 & \leq\left(\int_{B_{4b}}\left[\int_{B_{5b}}\left| g(r,y)\right|_{H}^{2}dy\right]^{1/2}\left|q_{\alpha,\beta}^{c_1/2}\left(s-r,z\right)\right|dz\right)^{2}  \nonumber
 \\
 & \leq C b^{d}\bM_x\left| g\right| _{H}^{2}(r,x)\left(\int_{B_{4b}}\left|q_{\alpha,\beta}^{c_1/2}\left(s-r,z\right)\right|dz\right)^{2}. \label{eqn 4.13.5}
\end{align}
Therefore, by \eqref{eqn 4.13.5},  we have
\begin{align}
&\int_{Q_b} |\cT g (s,y)|^2 dsdy  \nonumber
\\
&\leq C b^{d}\int_{-\kappa(b)}^0 \int_{-\infty}^{-2\kappa(b)} \bM_x\left| g\right| _{H}^{2}(r,x)\left(\int_{B_{4b}}\left|q_{\alpha,\beta}^{c_1/2}\left(s-r,z\right)\right|dz\right)^{2}drds  \nonumber
\\
&=C b^{d} \int_{-\infty}^{-2\kappa(b)}\int_{-\kappa(b)}^0  \bM_x\left| g\right| _{H}^{2}(r,x)\left(\int_{B_{4b}}\left|q_{\alpha,\beta}^{c_1/2}\left(s-r,z\right)\right|dz\right)^{2}dsdr   \nonumber
\\
&=C b^{d} \int_{-\infty}^{-2\kappa(b)} \bM_x\left| g\right| _{H}^{2}(r,x)  \left[\int_{-\kappa(b)-r}^{-r} \left(\int_{B_{4b}}\left|q_{\alpha,\beta}^{c_1/2}\left(s,z\right)\right|dz\right)^{2}ds \right] dr   \label{eqn 4.13.11}
\\
&=C b^d \left( \int_{-m\kappa(b)}^{-2\kappa(b)}\cdots dr+   \int_{-\infty}^{-m\kappa(b)}\cdots  dr \right)=: Cb^d \left( I(b)+II(b) \right), \nonumber
\end{align}
where $m$ is any fixed integer such that 
$m\geq 3$ and  $(m-1)\kappa(b)>\kappa(4b)$. Such integer $m$ exists due to \eqref{phiratio}.  

Obviously, to finish the proof of the lemma, it suffices to prove
\begin{equation}
   \label{eqn 4.15.9}
I(b)+II(b)\leq C \kappa(b)\bM_t \bM_x |g|_H^2(t,x).
\end{equation}
To prove this, we first  consider the integral inside the square brackets in \eqref{eqn 4.13.11}.  If $-m\kappa(b)<r<-2\kappa(b)$, then 
by  \eqref{int of q^gamma},
\begin{align}
\nonumber
\Big[ \,\, \Big]&:=\int_{-\kappa(b)-r}^{-r}  \left(\int_{B_{4b}}\left|q_{\alpha,\beta}^{c_1/2}\left(s,z\right)\right|dz\right)^{2}ds
\\
&\leq\int_{\kappa(b)}^{m\kappa(b)}  \left(\int_{B_{4b}}\left|q_{\alpha,\beta}^{c_1/2}\left(s,z\right)\right|dz\right)^{2}ds \nonumber
\\
\nonumber
&\leq \int_{\kappa(b)}^{m\kappa(b)}  \left(\int_{\bR^d}\left|q_{\alpha,\beta}^{c_1/2}\left(s,z\right)\right|dz\right)^{2}ds
\\
&\leq C \int_{\kappa(b)}^{m\kappa(b)} s^{-1} ds = C\ln m\leq C. \nonumber
\end{align}
Therefore, 
\begin{equation}
  \label{eqn 4.13.10}
I(b)\leq C \int_{-m\kappa(b)}^{-2\kappa(b)} \bM_x\left| g\right| _{H}^{2}(r,x) dr \leq C \kappa(b)\bM_t \bM_x |g|_H^2(t,x).
\end{equation}

Next, we estimate $II(b)$. If
 $r\leq -m\kappa(b)$, then  $-\kappa(b)-r>(m-1)\kappa(b)>\kappa(4b)$. Therefore,  by \eqref{qgamma partial est},
\begin{align}
\nonumber
\Big[ \,\, \Big]&:=\int_{-\kappa(b)-r}^{-r}  \left(\int_{B_{4b}}\left|q_{\alpha,\beta}^{c_1/2}\left(s,z\right)\right|dz\right)^{2}ds
\\
\label{eqn 4.14.5}
&\leq C \int_{-\kappa(b)-r}^{-r} \left( s^{-\beta} \int_{B_{4b}} \int_{(\phi(|z|^{-2}))^{-1}}^{2s^{\alpha}} (\phi^{-1}(l^{-1}))^{d/2} l^{-c_1/2} dl dz\right)^2 ds.
\end{align}

\noindent
By Fubini's theorem, for $s>\kappa(4b)$ (equivalently, $2s^\alpha > (\phi(b^{-2}/16))^{-1}$),
\begin{align}
\nonumber
&s^{-\beta} \int_{B_{4b}}\int_{(\phi(|z|^{-2}))^{-1}}^{2s^{\alpha}} (\phi^{-1}(l^{-1}))^{d/2} l^{-c_1/2} dl dz
\\
\nonumber
&= s^{-\beta}\int_0^{\left(\phi(b^{-2}/16) \right)^{-1}} \int_{|z|\leq \left(\phi^{-1}(l^{-1})\right)^{-1/2}} (\phi^{-1}(l^{-1}))^{d/2} l^{-c_1/2} dz dl 
\\
\nonumber
& \quad +s^{-\beta} \int_{\left(\phi(b^{-2}/16) \right)^{-1}}^{2s^\alpha} \int_{B_{4b}} (\phi^{-1}(l^{-1}))^{d/2} l^{-c_1/2} dz dl
\\
&\leq C \left(\phi(b^{-2}/16) \right)^{c_1/2-1} s^{-\beta} + C b^d   s^{-\beta} \int_{\left(\phi(b^{-2}/16) \right)^{-1}}^{2s^\alpha} (\phi^{-1}(l^{-1}))^{d/2}l^{-c_1/2} dl \nonumber\\
&\leq C \left(\phi(b^{-2}/16) \right)^{c_1/2-1} s^{-\beta} + C    s^{-\beta}  \phi ( b^{-2}/16 )^{-d/2} \int_{\left(\phi(b^{-2}/16) \right)^{-1}}^{2s^\alpha} l^{(-d-c_1)/2} dl.  
\label{eqn 4.15.10}
\end{align}
The last inequality above is due to  \eqref{phiratio} with $R=b^{-2}/16$ and $r=\phi^{-1}(l^{-1})$.   To estimate the integral above, we use the following inequality: for any $c\in \bR$ and $\varepsilon>0$,
$$
\int^a_b l^{c}dl\leq C(c) \left(a^{c+1}+b^{c+1}\right)+C(\varepsilon) 1_{c=-1} a^{\varepsilon}b^{-\varepsilon}, \qquad \forall\, 0< b<a.
$$
This is obvious if $c\neq -1$, and  if $c=-1$ then we use $\ln (a/b)\leq C(\varepsilon)(a/b)^{\varepsilon}$.  Thus we get
\begin{eqnarray*}
 \int_{\left(\phi(b^{-2}/16) \right)^{-1}}^{2s^\alpha} l^{(-d-c_1)/2} dl &\leq& Cs^{\alpha(2-d-c_1)/2}+C\left(\phi(b^{-2}/16) \right)^{(-2+d+c_1)/2} \\
 &&+1_{c_1+d=2} C(\varepsilon)s^{\alpha \varepsilon} \left(\phi(b^{-2}/16) \right)^{\varepsilon}.
  \end{eqnarray*} 
Coming back to \eqref{eqn 4.15.10} and \eqref{eqn 4.14.5}, and using the definition of $II(b)$,  we get 
\begin{align*}
&II(b)\\
 &\leq C \left(\phi(b^{-2}/16) \right)^{c_1-2} \int_{-\infty}^{-m\kappa(b)} \int_{-\kappa(b)-r}^{-r} \bM_x\left| g\right| _{H}^{2}(r,x)  s^{-2\beta} ds dr\\
&\,\,\,+C \left(\phi(b^{-2}/16) \right)^{-d}  \int_{-\infty}^{-m\kappa(b)} \int_{-\kappa(b)-r}^{-r} \bM_x\left| g\right| _{H}^{2}(r,x) s^{-2\beta+\alpha(2-d-c_1)} dsdr\\
&\,\,\,+C(\varepsilon)1_{c_1+d=2} \left(\phi(b^{-2}/16) \right)^{-d+2\varepsilon}  
\\
&\quad\quad\times \int_{-\infty}^{-m\kappa(b)} \int_{-\kappa(b)-r}^{-r} \bM_x\left| g\right| _{H}^{2}(r,x)  s^{-2\beta+2\alpha \varepsilon} dsdr\\
&=:II_1(b)+II_2(b)+II_3(b).
\end{align*}
Now we fix $\varepsilon>0$ such that $-2\beta+2\alpha \varepsilon <-1$ (recall $\beta>1/2$). Then,
$$
-2\beta<-1, \quad  -2\beta+\alpha(2-d-c_1)<-1, \quad  -2\beta+2\alpha \varepsilon <-1.
$$
Therefore, all of $II_i(b)$, $i=1,2,3$, can be handled by \eqref{f(s-r) 200413}. For instance, by  \eqref{f(s-r) 200413} with $\nu=2\beta-2\alpha \varepsilon$,
\begin{eqnarray}
\nonumber
II_3(b)&\leq& C  1_{c_1+d=2} \left(\phi(b^{-2}/16) \right)^{-d+2\varepsilon} \kappa(b)^{2-2\beta+2\alpha \varepsilon} \bM_t \bM_x\left| g\right| _{H}^{2}(t,x)  \\
&\leq& C\kappa(b) \bM_t \bM_x\left| g\right| _{H}^{2}(t,x).  \label{eqn 4.17.3}
\end{eqnarray}
One can handle $II_1(b)$ and $II_2(b)$ in the same way, and get 
$$II(b)\leq C \kappa(b) \bM_t \bM_x\left| g\right| _{H}^{2}(t,x).
$$
This together with \eqref{eqn 4.13.10} proves \eqref{eqn 4.15.9}, and the lemma is proved.
\end{proof}

\begin{lem} \label{outoutspaceestimate 200414}

Let $g\in C_{c}^{\infty}(\mathbb{R}^{d+1};H)$ have a support in $(-\infty,-2\kappa(b))\times B_{2b}^{c}$.
Then for any $(t,x)\in Q_{b}$,
$$
\aint_{Q_{b}}\aint_{Q_{b}}\left|\mathcal{T}g(s,y_1)-\mathcal{T}g(s,y_2)\right|^{2}dsdy_1d\tilde{s}dy_2\leq  C \bM_{t}\bM_x\left| g\right| _{H}^{2}(t,x),
$$
where $C=C(\alpha,\beta,d,\delta_0,\kappa_0)$.
\end{lem}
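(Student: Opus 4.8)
The plan is to adapt the proof of Lemma~\ref{outinestimate 200414}, the two novelties being that the spatial support of $g$ is now the \emph{unbounded} set $B_{2b}^c$ and that we are estimating a spatial oscillation, which we exploit via the fundamental theorem of calculus. Since the integrand of the claimed inequality is independent of $\tilde s$, it suffices to bound, for a fixed $s\in I_b$, the quantity $\aint_{B_b}\aint_{B_b}|\mathcal{T}g(s,y_1)-\mathcal{T}g(s,y_2)|^2\,dy_1dy_2$ and then average over $s\in I_b$. By the definition of $\mathcal{T}$, Minkowski's inequality in $L_2((-\infty,s);H)$, and the hypotheses $g(r,\cdot)\equiv0$ for $r>-2\kappa(b)$ and $\mathrm{supp}\,g(r,\cdot)\subset B_{2b}^c$,
\begin{equation*}
|\mathcal{T}g(s,y_1)-\mathcal{T}g(s,y_2)|\leq\left(\int_{-\infty}^{-2\kappa(b)}\left|\int_{B_{2b}^c}\big[q^{c_1/2}_{\alpha,\beta}(s-r,y_1-z)-q^{c_1/2}_{\alpha,\beta}(s-r,y_2-z)\big]g(r,z)\,dz\right|_H^2dr\right)^{1/2}.
\end{equation*}
Writing the bracket as $(y_1-y_2)\cdot\int_0^1\nabla_x q^{c_1/2}_{\alpha,\beta}(s-r,\bar y_\theta-z)\,d\theta$ with $\bar y_\theta:=\theta y_1+(1-\theta)y_2\in B_b$, using $|y_1-y_2|<2b$ and Minkowski once more, and noting that $|\bar y_\theta-z|\geq b$ on $B_{2b}^c$, the matter reduces (uniformly in $\theta\in[0,1]$) to estimating $\int_{B_{2b}^c}|D^1_x q^{c_1/2}_{\alpha,\beta}(s-r,\bar y_\theta-z)|\,|g(r,z)|_H\,dz$ and then multiplying by $2b$, squaring, and integrating in $r$; the gain is that we never touch the singularity of the kernel.

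To estimate the last integral, substitute $w=z-\bar y_\theta$ (so $|w|\geq b$ and $\{|w|\leq\rho\}\subset B_{3\rho}(x)$ for $\rho\geq b$) and split it at the scale $\rho_0=\rho_0(s-r):=\big(\phi^{-1}((s-r)^{-\alpha})\big)^{-1/2}$, which exceeds $b$ since $s-r\geq\kappa(b)=\phi(b^{-2})^{-1/\alpha}$. On $\{b\leq|w|\leq\rho_0\}$ one has $(s-r)^\alpha\phi(|w|^{-2})\geq1$, so the sharp bound \eqref{qgamma partial est} applies; on $\{|w|>\rho_0\}$ one uses the always valid \eqref{qgamma whole est}. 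In each region I would run the radial integration-by-parts formula \eqref{eq:integration by parts} in $z$ — the boundary term at $|w|=b$ vanishing because $g(r,\cdot)\equiv0$ on $B_{2b}$, that at infinity by the decay of the kernel — together with $\int_{B_{3\rho}(x)}|g(r,\cdot)|_H\leq C\rho^d\bM_x|g|_H(r,x)$, the derivative estimates \eqref{eqn 09.05.17:21}--\eqref{eqn 09.05.17:21-2}, the inequality \eqref{phiint} applied to the Bernstein function $\phi^{c_1/2}$ (Lemma~\ref{gammaversionj}(ii)), and \eqref{phiratio}. Using $\alpha c_1/2=\alpha-\beta+1/2$ together with the bound $\rho_0^{-1}=\big(\phi^{-1}((s-r)^{-\alpha})\big)^{1/2}\leq b^{-1}\kappa(b)^{\alpha/2}(s-r)^{-\alpha/2}$ (which follows from \eqref{phiratio}), this should produce a bound of the form
\begin{equation*}
\int_{B_{2b}^c}|D^1_x q^{c_1/2}_{\alpha,\beta}(s-r,\bar y_\theta-z)|\,|g(r,z)|_H\,dz\leq Cb^{-1}\sum_i\kappa(b)^{\nu_i-1/2}(s-r)^{-\nu_i}\,\bM_x|g|_H(r,x),
\end{equation*}
a finite sum with every $\nu_i>1/2$ — this is where $\beta>1/2$ enters — a borderline logarithm (when $c_1=1$ or $c_1+d=2$) being absorbed as in \eqref{eqn 4.15.10} via $\ln(a/b)\leq C(\varepsilon)(a/b)^\varepsilon$.

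To finish, multiply this display by $2b$, square, and use $(\bM_x|g|_H)^2\leq\bM_x|g|_H^2$; the factor $2b$ cancels the $b^{-1}$, so that for each term $(2b)^2(b^{-1}\kappa(b)^{\nu_i-1/2})^2=4\kappa(b)^{2\nu_i-1}$. Integrating in $r$ over $(-\infty,-2\kappa(b))$ and in $s$ over $I_b=(-\kappa(b),0)$, the elementary inequality \eqref{f(s-r) 200413} (applied with exponent $2\nu_i>1$) produces a factor $\kappa(b)^{2-2\nu_i}$; averaging over $I_b$ divides by $|I_b|=\kappa(b)$, and $\kappa(b)^{2\nu_i-1}\cdot\kappa(b)^{2-2\nu_i}\cdot\kappa(b)^{-1}=\kappa(b)^0=1$, so the whole expression is bounded by $C\,\bM_t\bM_x|g|_H^2(t,x)$, which is the assertion. (Averaging over $B_b$ contributes nothing because the above bound is uniform in $y_1,y_2$, and the average over $\tilde s$ is trivial.)

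The step I expect to be the obstacle is the one in the second paragraph, precisely because $B_{2b}^c$ is unbounded: one cannot, as in Lemma~\ref{outinestimate 200414}, reduce to the $L_1$-bound \eqref{int of q^gamma} of $q^{c_1/2}_{\alpha,\beta}$ over a fixed ball, and must instead run the radial integration by parts and patch the sharp bound \eqref{qgamma partial est} with the crude bound \eqref{qgamma whole est} across $\rho_0(s-r)$. The delicate point is to extract enough decay in $s-r$: a careless estimate only yields $(s-r)^{-1/2}$, whose square $(s-r)^{-1}$ is not integrable in $r$ near $-\infty$, and the required half-order improvement is exactly what the bound $\rho_0^{-1}\leq b^{-1}\kappa(b)^{\alpha/2}(s-r)^{-\alpha/2}$ supplies; it is also this gain that, together with $\kappa(b)=\phi(b^{-2})^{-1/\alpha}$ and $c_1=2-(2\beta-1)/\alpha$, forces all powers of $b$ to cancel in the end. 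Once the exponents are checked to balance, the remaining manipulations are routine and entirely parallel to those in Lemmas~\ref{inwholeestimate 200316} and \ref{outinestimate 200414}.
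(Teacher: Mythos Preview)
Your proposal is correct and follows essentially the same route as the paper: fundamental theorem of calculus for the spatial oscillation, splitting at $\rho_0=(\phi^{-1}((s-r)^{-\alpha}))^{-1/2}$, bounds \eqref{qgamma whole est}/\eqref{qgamma partial est} on the two shells, radial integration by parts with the $G$/$H$ functions of Lemma~\ref{lem 09.05.17:48}, the key conversion $(\phi^{-1}((s-r)^{-\alpha}))^{1/2}\leq b^{-1}\kappa(b)^{\alpha/2}(s-r)^{-\alpha/2}$ via \eqref{phiratio}, and \eqref{f(s-r) 200413} to close the $r$-integral. One small point you gloss over: the integration by parts on the inner shell $b<|w|<\rho_0$ leaves a nonzero upper boundary term at $\rho_0$ (the paper's $II_2$), which must be kept and estimated separately---though it yields the same form $b^{-1}\kappa(b)^{\nu-1/2}(s-r)^{-\nu}$---and only $c_1=1$ is borderline in this lemma, not $c_1+d=2$.
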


\begin{proof}
Since $g(r,z)=0$ if $|z|\leq 2b$,  by the fundamental theorem of calculus,  for any $s\in (-\kappa(b),0)$ and $y_1,y_2\in B_b$ we have
\begin{align}
&|\cT g(s,y_1)- \cT g(s,y_2)|^2  \nonumber
\\
&\leq \int_{-\infty}^s\left|\int_{\bR^d} \int_0^1 \nabla q_{\alpha,\beta}^{c_1/2}(s-r,\bar{u}-z) \cdot (y_2-y_1) g(r,z) dudz\right|_H^2dr     \nonumber
\\
&= \int_{-\infty}^s\left|\int_{\bR^d} \int_0^1 \nabla q_{\alpha,\beta}^{c_1/2}(s-r,z) \cdot (y_2-y_1) g(r,\bar{u}-z) dudz\right|_H^2dr       \nonumber
\\
& \leq 4b^2   \int_{-\infty}^{-2\kappa(b)} \left| \int_{B_b^c} \int_0^1 |\nabla q_{\alpha,\beta}^{c_1/2}(s-r,z)||  g(r,\bar{u}-z)|_H dudz\right|^{2}dr   \label{eqn 4.16.11}
\end{align}
where $\bar{u}=\bar{u}(u,y_{1},y_{2})=(1-u)y_1+uy_2$, and $|\bar{u}|\leq b$. 

 Note that for $s\in (-\kappa(b),0)$ and $r <-2\kappa(b)$, 
\begin{equation}
\label{eqn 4.17.1}
s-r> \kappa(b), \quad (\phi^{-1}((s-r)^{-\alpha}))^{-1/2}> b.
\end{equation}
 Hence, by  Lemma \ref{prop:kernel esti. of q} (iii),
\begin{align}
&  \int_{B_b^c} \int_{0}^{1} | \nabla q_{\alpha,\beta}^{c_1/2}(s-r,z) |  |g(r,\bar{u}-z)|_{H} du dz    \nonumber
\\ 
&= \int_{|z|\geq (\phi^{-1}((s-r)^{-\alpha}))^{-1/2}} \int_{0}^{1} | \nabla q_{\alpha,\beta}^{c_1/2}(s-r,z) |  |g(r,\bar{u}-z)|_{H} du dz        \nonumber
\\
&\quad + \int_{b<|z|<(\phi^{-1}((s-r)^{-\alpha}))^{-1/2}} \int_{0}^{1} | \nabla q_{\alpha,\beta}^{c_1/2}(s-r,z) |  |g(r,\bar{u}-z)|_{H} du dz    \nonumber
\\
&\leq \int_{0}^{1} \int_{|z|\geq (\phi^{-1}((s-r)^{-\alpha}))^{-1/2}}  (s-r)^{\alpha-\beta}G_{c_{1}/2,d+1}(|z|) |g(r,\bar{u}-z)|_{H} dz du       \nonumber
\\
&\quad + \int_{0}^{1}  \int_{b<|z|<(\phi^{-1}((s-r)^{-\alpha}))^{-1/2}} H_{c_{1}/2,d+1}(s-r,|z|) |g(r,\bar{u}-z)|_{H} dz  du           \nonumber
\\
&=: I(s,r,y_1,y_2)+II(s,r,y_1,y_2),             \label{eqn 4.16.12}
\end{align}
where $G_{c_{1}/2,d+1}(|z|)$ and $H_{c_{1}/2,d+1}(s-r,|z|)$ are from \eqref{G} and \eqref{H}, respectively.   

By \eqref{eqn 4.16.12}  and \eqref{eqn 4.16.11},  
\begin{eqnarray}
  \label{eqn 4.16.13}
     && \int_{Q_{b}}\int_{Q_{b}}\left|\mathcal{T}g(s,y_1)-\mathcal{T}g(s,y_2)\right|^{2}dsdy_1d\tilde{s}dy_2       \\
     & \leq& C  b^2     
       \int_{Q_{b}}\int_{Q_{b}}              \int^{-2\kappa(b)}_{-\infty} \left(I^2(s,r,y_1,y_2)+II^2(s,r,y_1,y_2) \right)dr dsdy_1d\tilde{s}dy_2.             \nonumber
        \end{eqnarray}
        
As in the proof of Lemma \ref{inwholeestimate 200316},  using \eqref{eq:integration by parts}, \eqref{eqn 09.05.17:21} and \eqref{eq:2} (recall $(\phi^{-1}((s-r)^{-\alpha}))^{-1/2}> b$), we get
\begin{align*}
&I(s,r,y_1,y_2)
\\
&\leq  C \int_{(\phi^{-1}((s-r)^{-\alpha}))^{-1/2}}^\infty  (s-r)^{\alpha-\beta}\frac{\phi(\rho^{-2})^{c_1/2}}{\rho^{d+2}} \left(\int_{B_{3\rho}(x)} |g(r,z)|_H dz\right) d\rho\\
&\leq C  \bM_x |g|_H (r,x)  \int_{(\phi^{-1}((s-r)^{-\alpha}))^{-1/2}}^\infty  (s-r)^{\alpha-\beta}\frac{\phi(\rho^{-2})^{c_1/2}}{\rho^{2}} d\rho.
\end{align*}
Hence, by \eqref{phiint}, for $r<-2\kappa(b)<s$, 
\begin{align*}
&I(s,r,y_1,y_2)
\\
&\leq  C \bM_x |g|_H (r,x) (\phi^{-1}((s-r)^{-\alpha}))^{1/2}\int_{(\phi^{-1}((s-r))^{-\alpha})^{-1/2}}^\infty  (s-r)^{\alpha-\beta}\frac{\phi(\rho^{-2})^{c_1/2}}{\rho} d\rho
\\
&\leq  C \bM_x |g|_H (r,x) (\phi^{-1}((s-r)^{-\alpha}))^{1/2} (s-r)^{\alpha(1-c_1/2)-\beta}
\\
&= C \bM_x |g|_H (r,x) (\phi^{-1}((s-r)^{-\alpha}))^{1/2} (s-r)^{-1/2}
\\
&\leq  C \bM_x |g|_H (r,x) \phi(b^{-2})^{-1/2}b^{-1} (s-r)^{-\alpha/2-1/2}.
\end{align*}
For the last inequality, we use \eqref{phiratio} with $r=\phi^{-1}((s-r)^{-\alpha})$ and $R=b^{-2}$. 
Therefore, by \eqref{f(s-r) 200413},
\begin{align}
 &b^2     
       \int_{Q_{b}}\int_{Q_{b}}              \int^{-2\kappa(b)}_{-\infty} I^2(s,r,y_1,y_2)dr dsdy_1d\tilde{s}dy_2   \nonumber\\
&\leq C b^{2+2d} \kappa(b) \int_{-\kappa(b)}^0\int_{-\infty}^{s-\kappa(b)} I^2(s,r,y_1,y_2) dr ds               \nonumber \\
&\leq  C b^{2+2d} \kappa(b)\int_{-\kappa(b)}^0\int_{-\infty}^{-2\kappa(b)} \bM_x |g|_H^2 (r,x) \phi(b^{-2})^{-1}b^{-2} (s-r)^{-\alpha-1}drds               \nonumber
\\
&\leq  C  \phi(b^{-2})^{-1}b^{2d} (\kappa(b))^{2-\alpha}\bM_t \bM_x |g|_H^2(t,x)            \nonumber
\\
&=  C  (\kappa(b))^2b^{2d}\bM_t \bM_x |g|_H^2(t,x) .              \label{eqn 4.16.15}
\end{align}
Similarly, by \eqref{eq:integration by parts}, \eqref{eqn 09.05.17:21-2} and \eqref{eq:2} (recall $(\phi^{-1}((s-r)^{-\alpha}))^{-1/2}> b$), we have
\begin{align*}
&II(s,r,y_1,y_2) \\
& \leq C \int_b^{(\phi^{-1}((s-r)^{-\alpha}))^{-\frac{1}{2}}} H_{c_1/2,d+2}(s-r,\rho) \left(\int_{B_{3\rho}(x)} |g(r,z)|_H dz\right) d\rho
\\
&  \quad + C H_{c_1/2,d+1}(s-r,(\phi^{-1}((s-r)^{-\alpha}))^{-\frac{1}{2}}) \left(\int_{B_{(\phi^{-1}((s-r)^{-\alpha}))^{-\frac{1}{2}}}} |g(r,\bar{u}-z)|_H dz\right)
\\
&=: II_1 (s,r) + II_2 (s,r,y_1,y_2).
\end{align*}
By definition of $H_{c_{1}/2,d+1}$ (see \eqref{H}), 
\begin{align*}
&II_1(s,r)       \nonumber \\
&\leq   C \bM_x |g|_H (r,x) (s-r)^{-\beta} \int_{(\kappa(b))^{\alpha}}^{2(s-r)^\alpha} \int_{b}^{(\phi^{-1}(l^{-1}))^{-\frac{1}{2}}}(\phi^{-1}(l^{-1}))^{(d+2)/2} l^{-c_1/2} \rho^d d\rho dl  \nonumber
\\
& \leq  C \bM_x |g|_H (r,x)(s-r)^{-\beta} \int_{(\kappa(b))^{\alpha}}^{2(s-r)^\alpha} (\phi^{-1}(l^{-1}))^{1/2} l^{-c_1/2} dl \nonumber
\\
& \leq C  \bM_x |g|_H (r,x)(s-r)^{-\beta} \phi(b^{-2})^{-1/2}b^{-1}\int_{(\kappa(b))^{\alpha}}^{2(s-r)^\alpha} l^{-1/2-c_1/2} dl   \nonumber \\
 \end{align*}
 For the last inequality above we used   \eqref{phiratio} with $R=b^{-2}$ and $r=\phi^{-1}(l^{-1})$.
 
 Also, by   \eqref{H} and \eqref{phiratio} with $R=\phi^{-1}(l^{-1})$ and $r=\phi^{-1}((s-r)^{-\alpha})$ (recall $(\phi^{-1}((s-r)^{-\alpha}))^{-1/2}> b$),
\begin{align*}
&II_2(s,r,y_1,y_2)
\\
&\leq C \bM_x |g|_H (r,x)\int_{(s-r)^\alpha}^{2(s-r)^\alpha} l^{-1/2-c_1/2} (s-r)^{\alpha/2-\beta} (\phi^{-1}((s-r)^{-\alpha}))^{1/2} dl.
\end{align*}
We use \eqref{phiratio}  again with $R=b^{-2}$ and $r=\phi^{-1}((s-r)^{-\alpha})$, and get
\begin{align*}
&II_2(s,r,y_1,y_2) 
\\
&\leq C \phi(b^{-2})^{-1/2}b^{-1} (s-r)^{-\beta} \bM_x |g|_H (r,x)\int_{(s-r)^\alpha}^{2(s-r)^\alpha} l^{-1/2-c_1/2}dl \\
&\leq C \phi(b^{-2})^{-1/2}b^{-1} (s-r)^{-\beta} \bM_x |g|_H (r,x)\int_{(\kappa(b))^\alpha}^{2(s-r)^\alpha} l^{-1/2-c_1/2}dl. \end{align*}
The second inequality above is due to \eqref{eqn 4.17.1}. Thus
\begin{eqnarray}
\nonumber
&&II^2\leq 2(II_1(s,r)+II_2(s,r,y_1,y_2))^2  \nonumber \\
&\leq&  C\phi(b^{-2})^{-1}b^{-2} (s-r)^{-2\beta} \bM_x |g|^2_H (r,x)   \left(\int_{(\kappa(b))^{\alpha}}^{2(s-r)^\alpha} l^{-1/2-c_1/2} dl \right)^2   \nonumber \\
&\leq& C\phi(b^{-2})^{-1}b^{-2} (s-r)^{-2\beta} \bM_x |g|^2_H (r,x)  \nonumber \label{eqn 4.16.21}\\
&& \times  \left[ (s-r)^{\alpha (1-c_1)}+(\kappa(b))^{\alpha(1-c_1)}+C(\varepsilon)1_{c_1=1}(s-r)^{2\alpha \varepsilon} (\kappa(b))^{-2\alpha \varepsilon} \right], 
\nonumber
\end{eqnarray}
where $\varepsilon>0$ is chosen so that $-2\beta+2\alpha \varepsilon<-1$.  Note that 
$$
-2\beta+\alpha(1-c_1)=-1-\alpha<-1, \quad -2\beta<-1, \quad -2\beta+2\alpha \varepsilon<-1.
$$
As is done for \eqref{eqn 4.17.3}, applying \eqref{f(s-r) 200413} three times with $\nu=1+\alpha, 2\beta$ and $2\beta-2\alpha \varepsilon$, we get
\begin{align*}
 &b^2     
       \int_{Q_{b}}\int_{Q_{b}}              \int^{-2\kappa(b)}_{-\infty} II^2(s,r,y_1,y_2)dr dsdy_1d\tilde{s}dy_2   \nonumber\\
&\leq C b^{2+2d} \kappa(b) \int_{-\kappa(b)}^0\int_{-\infty}^{-2\kappa(b)} II^2(r,s,y_1,y_2) dr ds  \nonumber   \\
&\leq C  (\kappa(b))^2b^{2d}\bM_t \bM_x |g|_H^2(t,x) .              \label{eqn 4.16.19}
\nonumber \\
\end{align*}
This, \eqref{eqn 4.16.15} and \eqref{eqn 4.16.13} prove the lemma. 
\end{proof}

\begin{lem} \label{outouttimeestimate 200414}

Let $g\in C_{c}^{\infty}(\mathbb{R}^{d+1};H)$ have a support in $(-\infty,-2\kappa(b))\times B_{2b}^{c}$.
Then for any $(t,x)\in Q_{b}$,
$$
\aint_{Q_{b}}\aint_{Q_{b}}\left|\mathcal{T}g(s_1,y)-\mathcal{T}g(s_2,y)\right|^{2}ds_1dyds_2d\tilde{y}\leq C\bM_{t}\bM_x\left| g\right| _{H}^{2}(t,x),
$$
where $C=C(\alpha,\beta,d,\delta_0,\kappa_0)$.
\end{lem}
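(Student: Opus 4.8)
The plan is to establish the time-oscillation estimate by mirroring the structure of the proof of Lemma~\ref{outoutspaceestimate 200414}, replacing the spatial difference $\mathcal{T}g(s,y_1)-\mathcal{T}g(s,y_2)$ with the time difference $\mathcal{T}g(s_1,y)-\mathcal{T}g(s_2,y)$. First I would exploit the sublinearity/triangle inequality for the $L_2$-norm underlying $\mathcal{T}$: for $-\kappa(b)<s_2<s_1<0$,
\begin{align*}
|\mathcal{T}g(s_1,y)-\mathcal{T}g(s_2,y)|
&\leq \left(\int_{-\infty}^{s_2}\Big|\int_{\bR^d}\big(q^{c_1/2}_{\alpha,\beta}(s_1-r,z)-q^{c_1/2}_{\alpha,\beta}(s_2-r,z)\big)g(r,y-z)\,dz\Big|_H^2 dr\right)^{1/2}
\\
&\quad+\left(\int_{s_2}^{s_1}\Big|\int_{\bR^d}q^{c_1/2}_{\alpha,\beta}(s_1-r,z)g(r,y-z)\,dz\Big|_H^2 dr\right)^{1/2}.
\end{align*}
Since $g$ is supported in $(-\infty,-2\kappa(b))\times B_{2b}^c$, the second term vanishes because $s_2>-\kappa(b)>-2\kappa(b)\geq r$ forces $g(r,\cdot)\equiv 0$ on the range of integration $r\in(s_2,s_1)$. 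So only the first term, involving the time increment of the kernel, survives.

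Next I would write the kernel difference via the fundamental theorem of calculus in time: $q^{c_1/2}_{\alpha,\beta}(s_1-r,z)-q^{c_1/2}_{\alpha,\beta}(s_2-r,z)=\int_{s_2}^{s_1}\partial_\tau q^{c_1/2}_{\alpha,\beta}(\tau-r,z)\,d\tau$, and $|s_1-s_2|\leq\kappa(b)$. The key new ingredient is a pointwise bound on $\partial_t q^{\gamma}_{\alpha,\beta}(t,x)$; I expect this follows by differentiating the representation $q^{\gamma}_{\alpha,\beta}(t,x)=\int_0^\infty \phi(\Delta)^\gamma p(r,x)\varphi_{\alpha,\beta}(t,r)\,dr$ under the integral, which converts $\partial_t\varphi_{\alpha,\beta}$ into $\varphi_{\alpha,\beta+1}$ (increasing the $\beta$-index by one), so effectively $\partial_t q^{c_1/2}_{\alpha,\beta}$ obeys the same type of bounds as $q^{c_1/2}_{\alpha,\beta+1}$ from Lemma~\ref{prop:kernel esti. of q}(iii), i.e.\ with an extra factor $(\tau-r)^{-1}$ roughly speaking. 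Granting this, for $\tau\in(-\kappa(b),0)$ and $r<-2\kappa(b)$ we again have $\tau-r>\kappa(b)$ and $(\phi^{-1}((\tau-r)^{-\alpha}))^{-1/2}>b$, so I would split the $z$-integral at radius $(\phi^{-1}((\tau-r)^{-\alpha}))^{-1/2}$ exactly as in Lemma~\ref{outoutspaceestimate 200414}, use the integration-by-parts formula \eqref{eq:integration by parts} together with \eqref{eqn 09.05.17:21}, \eqref{eqn 09.05.17:21-2} and the inclusion \eqref{eq:2} to replace $\int_{B_{3\rho}(x)}|g(r,z)|_H\,dz$ by $\rho^d\bM_x|g|_H(r,x)$, and reduce to radial integrals controlled by \eqref{phiint}. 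The bookkeeping will produce bounds of the form $\bM_x|g|_H(r,x)$ times powers of $\phi(b^{-2})$ and $(\tau-r)$, with the time integration over $\tau$ contributing the extra factor $\kappa(b)$ from $|s_1-s_2|\leq\kappa(b)$ and the squaring.

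Then I would assemble: after squaring, integrating $dr$ over $(-\infty,-2\kappa(b))$, integrating $ds_1\,ds_2$ over $(-\kappa(b),0)^2$ and $dy$ over $B_b$, and invoking Lemma~\ref{lem 09.05.17:48}(i) (the estimate \eqref{f(s-r) 200413}) — applied with the several exponents $\nu$ that arise, each of which exceeds $1$ precisely because $\beta>1/2$, after fixing a small $\varepsilon>0$ with $-2\beta+2\alpha\varepsilon<-1$ to absorb any borderline $\log$ terms exactly as in \eqref{eqn 4.17.3} — I obtain the bound $C(\kappa(b)\,b^d)^2\,\bM_t\bM_x|g|_H^2(t,x)$ for the double integral of $|\mathcal{T}g(s_1,y)-\mathcal{T}g(s_2,y)|^2$ over $Q_b\times Q_b$ before averaging; dividing by $|Q_b|^2\sim(\kappa(b)b^d)^2$ gives the claim. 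The main obstacle I anticipate is establishing and cleanly bookkeeping the pointwise bound on the \emph{time} derivative $\partial_t q^{c_1/2}_{\alpha,\beta}$ — verifying that differentiation under the integral sign is justified (using the exponential decay \eqref{philarge} and the near-zero bound \eqref{betainteger} for $\partial_t\varphi_{\alpha,\beta}=\varphi_{\alpha,\beta+1}$, together with Lemma~\ref{pfracderivativeestimate}) and tracking how the shift $\beta\mapsto\beta+1$ interacts with the two regimes $t^\alpha\phi(|x|^{-2})\lessgtr 1$ in Lemma~\ref{prop:kernel esti. of q}(iii). Once that derivative bound is in hand, the remainder is a direct, if lengthy, repetition of the estimates already carried out in Lemmas~\ref{outinestimate 200414} and~\ref{outoutspaceestimate 200414}.
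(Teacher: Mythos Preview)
Your plan is sound and would lead to a valid proof, but it is more elaborate than necessary and diverges from the paper's argument after the first step. You correctly identify that the time difference reduces, via the fundamental theorem of calculus, to an integral of $\partial_t q^{c_1/2}_{\alpha,\beta}=q^{c_1/2}_{\alpha,\beta+1}$, and that the contribution from $r\in(s_2,s_1)$ vanishes by the support assumption. From that point on, however, the paper does \emph{not} split the $z$-integral at the scale $(\phi^{-1}((\tau-r)^{-\alpha}))^{-1/2}$ and does not invoke the refined estimate \eqref{qgamma partial est} or the $H$-functions at all. Instead it uses only the global bound \eqref{qgamma whole est} for $q^{c_1/2}_{\alpha,\beta+1}$ on the whole region $|z|>b$, applies \eqref{eq:integration by parts}--\eqref{eqn 09.05.17:21}--\eqref{phiint} once to get
\[
\Big|\int_{B_b^c} q^{c_1/2}_{\alpha,\beta+1}(\bar s-r,z)\,g(r,y-z)\,dz\Big|_H \le C\,\bM_x|g|_H(r,x)\,(\bar s-r)^{\alpha-\beta-1}\,\phi(b^{-2})^{c_1/2},
\]
observes $\bar s-r\sim s-r$ for any $s\in(-\kappa(b),0)$, and then applies \eqref{f(s-r) 200413} a single time with $\nu=2(\beta+1-\alpha)$; this exponent exceeds $1$ simply because $\alpha<1$ and $\beta>1/2$. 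The powers of $\phi(b^{-2})$ and $\kappa(b)$ then cancel exactly via $\kappa(b)^\alpha=\phi(b^{-2})^{-1}$ and the definition of $c_1$. In short, the shift $\beta\mapsto\beta+1$ already provides enough temporal decay to make the two-regime machinery of Lemma~\ref{outoutspaceestimate 200414} unnecessary here; your anticipated ``several exponents'' and borderline logarithmic cases never arise. Your route would also close (the exponents you would meet do exceed $1$), but it repeats the full bookkeeping of Lemma~\ref{outoutspaceestimate 200414} where the paper gets away with a three-line computation.
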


\begin{proof}
Since $g(r,z)=0$ if $|z|\leq 2b$, for $s_1,s_2\in (-\kappa(b),0)$, and $y\in B_b$,
\begin{align*}
&|\cT g(s_1,y)- \cT g(s_2,y)| 
\\
&\leq \left[\int_{-\infty}^{-2\kappa(b)}\left|\int_{\bR^d} \int_0^1 q_{\alpha,\beta+1}^{c_1/2}(\bar{s}-r,y-z)(s_1-s_2) g(r,z) dudz\right|_H^2dr\right]^{1/2}
\\
&\leq \left[\int_{-\infty}^{-2\kappa(b)}\left|\int_{\bR^d} \int_0^1 q_{\alpha,\beta+1}^{c_1/2}(\bar{s}-r,z)(s_1-s_2) g(r,y-z) dudz\right|_H^2dr\right]^{1/2}
\\
& \leq C \kappa(b) \left[\int_{-\infty}^{-2\kappa(b)} \left| \int_{B_b^c} \int_0^1 q_{\alpha,\beta+1}^{c_1/2}(\bar{s}-r,z) g(r,y-z) dudz\right|_{H}^2dr\right]^{1/2}
\end{align*}
where $\bar{s}=\bar{s}(u,s_{1},s_{2})=(1-u)s_1+us_2$.

Note that for $B_{\rho}(y) \subset B_{2\rho}(x)$ if $\rho\geq b$, and $x,y\in B_{b}$. Therefore, by Lemma \ref{prop:kernel esti. of q} (ii), \eqref{eq:integration by parts}, \eqref{eqn 09.05.17:21} and \eqref{phiint}, for $s_1,s_2\in(-\kappa(b),0)$ and $r<-2\kappa(b)$,
\begin{align*}
&\left| \int_{B_b^c}  q_{\alpha,\beta+1}^{c_1/2}(\bar{s}-r,z) g(r,y-z) dz\right|_{H}
\\
&\leq C\int_{b}^\infty  (\bar{s}-r)^{\alpha-\beta-1}\frac{\phi(\rho^{-2})^{c_1/2}}{\rho^{d+1}} \left(\int_{B_{2\rho}(x)} |g(r,z)|_H dz\right) d\rho
\\
&\leq C \bM_x |g|_H(r,x) \int_{b}^\infty(\bar{s}-r)^{\alpha-\beta-1}\frac{\phi(\rho^{-2})^{c_1/2}}{\rho}  d\rho
\\
&\leq C \bM_x |g|_H(r,x) (\bar{s}-r)^{\alpha-\beta-1} \phi(b^{-2})^{c_1/2}.
\end{align*}
Since $-\kappa(b)<s_{1},s_{2}<0$, and $r<-2\kappa(b)$, we have $\frac{1}{2}\leq\frac{\bar{s}-r}{s-r}\leq2$ for any $s\in(-\kappa(b),0)$. Hence, by \eqref{f(s-r) 200413} (recall $2\alpha-2\beta-2<-1$),
\begin{align*}
&\int_{-\kappa(b)}^0 \int_{-\kappa(b)}^0 \int_{-\infty}^{-2\kappa(b)}  \left| \int_{B_b^c} \int_0^1 q_{\alpha,\beta+1}^{c_1/2}(\bar{s}-r,z) g(r,y-z) dudz\right|_{H}^2dr ds_1ds_2
\\
&\leq C \phi(b^{-2})^{c_1}\kappa(b)\int_{-\kappa(b)}^0 \int_{-\infty}^{-2\kappa(b)} \bM_x |g|_H^2(r,x) (s-r)^{2\alpha-2\beta-2} drds
\\
&\leq C \bM_t \bM_x |g|_H^2(t,x).
\end{align*} 
The lemma is proved.
\end{proof}

{\textbf{Proof of Theorem \ref{thm:L-P}}}

Due to  Lemma \ref{lem:L2 result},  we may  assume  $p>2$.
\\
First we prove for each $Q=Q_{b}(t_0,x_0)$ and $(t,x)\in Q$,
\begin{equation} \label{sharpmax before sup}
\aint_{Q}|\mathcal{T}g-\left(\mathcal{T}g\right)_{Q}|^{2}dsdy\leq C \bM_{t}\bM_x\left| g\right| _{H}^{2}(t,x).
\end{equation}
Note that for any $t_{0}\in\mathbb{R}$ and $x_{0}\in\mathbb{R}^{d}$,
\begin{align*}
&\mathcal{T}g(t+t_{0},x+x_{0}) 
\\
& =\left[\int_{-\infty}^{t+t_{0}}\left| \phi(\Delta)^{{c_1}/2}T_{t+t_{0}-s}^{\alpha,\beta}g(s,\cdot)(x+x_{0})\right| _{H}^{2}ds\right]^{1/2}
\\
& =\left[\int_{-\infty}^{t+t_{0}}\left| \int_{\mathbb{R}^{d}}q_{\alpha,\beta}^{c_1/2}(t+t_{0}-s,x+x_{0}-y)g(s,y)dy\right| _{H}^{2}ds\right]^{1/2}
\\
& =\left[\int_{-\infty}^{t}\left| \int_{\mathbb{R}^{d}}q_{\alpha,\beta}^{c_1/2}(t-s,x-y)\bar{g}(s,y)dy\right| _{H}^{2}ds\right]^{1/2}
\\
& =\mathcal{T}\bar{g}(t,x),
\end{align*}
where $\bar{g}(s,y):=g(s+t_{0},y+x_{0})$.  Therefore,
\begin{eqnarray*}
\aint_{Q_b(t_{0},x_{0})}|\mathcal{T}g-\left(\mathcal{T}g\right)_{Q_b(t_{0},x_{0})}|^{2}dsdy= \aint_{Q_b(0,0)}|\mathcal{T}\bar{g}-\left(\mathcal{T}\bar{g}\right)_{Q_b(0,0)}|^{2}dsdy.
\end{eqnarray*}
This implies that, to prove \eqref{sharpmax before sup}, it suffices to  consider the only case $Q=Q_{b}(0,0)$.

Now we fix $b>0$ and  take a function $\zeta\in C^{\infty}(\bR)$ such that $\zeta=1$ on $[-7\kappa(b)/3,\infty)$, $\zeta=0$ on $(-\infty,-8\kappa(b)/3)$, and $0\leq\zeta\leq1$. We also choose a function $\eta\in C_{c}^{\infty}(\mathbb{R}^{d})$ such that $\eta=1$ on $B_{7b/3}$, $\eta=0$ outside of $B_{8b/3}$,
and $0\leq\eta\leq1$. Set
$$
g_{1}(t,x)=g\zeta,\quad g_{2}=g(1-\zeta)\eta,\quad g_{3}=g(1-\zeta)(1-\eta).
$$

We   show that for any $c\in \bR$,
\begin{align}
\nonumber
\left|\mathcal{T}g(s,y)-c\right| & \leq\left|\mathcal{T}g_{1}(s,y)\right|+\left|\mathcal{T}(g_{2}+g_{3})(s,y)-c\right|\\
 & \leq\left|\mathcal{T}g_{1}(s,y)\right|+\left|\mathcal{T}g_{2}(s,y)\right|+\left|\mathcal{T}g_{3}(s,y)-c\right|.  \label{eqn 09.11.16:13}
\end{align}
Fix $c\in \bR$. If $\cT g (s,y)>c$, then due to the sublinearity of $\cT$
\begin{align*}
\cT g(s,y)-c &\leq \cT g_1 (s,y) + \cT g_2 (s,y) + \cT g_3 (s,y)-c 
\\
&\leq\left|\mathcal{T}g_{1}(s,y)\right|+\left|\mathcal{T}g_{2}(s,y)\right|+\left|\mathcal{T}g_{3}(s,y)-c\right|
\end{align*}
Suppose $\cT g(s,y)< c$. Again by the sublinearity,
\begin{gather*}
\cT g \geq -\cT g_{1}+\cT (g_{2}+g_{3}),
\end{gather*}
Therefore,
\begin{align*}
c-\cT g(s,y) &\leq \cT g_1 (s,y) + c -\cT (g_2+g_3)(s,y)
\\
&\leq \left|\mathcal{T}g_{1}(s,y)\right| + \left|c - \cT g_3 (s,y) \right|+\left|\cT g_3 (s,y) -\cT (g_2+g_3)(s,y)\right|
\\
&\leq \left|\mathcal{T}g_{1}(s,y)\right| +\left|\mathcal{T}g_{2}(s,y)\right|+\left|\mathcal{T}g_{3}(s,y)-c\right|.
\end{align*}
Thus \eqref{eqn 09.11.16:13} is proved. 

By Lemma \ref{inwholeestimate 200316} and  Lemma \ref{outinestimate 200414},  we have
\begin{align}
\aint_{Q}|\cT g_{1}(s,y)|^{2} dyds \leq C \bM_{t}\bM_{x}|g_1|^{2}_{H} \leq C \bM_{t}\bM_{x}|g|^{2}_{H}, \label{eqn 09.11.16:11}
\\
\aint_{Q} |\cT g_{2}(s,y)|^{2} dyds \leq C \bM_{t}\bM_{x}|g_2|^{2}_{H} \leq C \bM_{t}\bM_{x}|g|^{2}_{H}. \label{eqn 09.11.16:12}
\end{align}
Also using Lemma \ref{outoutspaceestimate 200414} and Lemma \ref{outouttimeestimate 200414}  we have
\begin{align}
&\aint_{Q}|\cT g_{3}(s,y)  -  ( \cT g_{3})_{Q}|^{2}dzdrdyds \nonumber
\\
& \leq C \aint_{Q}\aint_{Q}|\cT g_{3}(s,y)  -   \cT g_{3}(s,z)|^{2}dzdrdyds \nonumber
\\
&\quad + C \aint_{Q}\aint_{Q}|\cT g_{3}(s,z)  -   \cT g_{3}(r,z)|^{2}dzdrdyds \nonumber
\\
&\leq C \bM_{t}\bM_{x}|g|^{2}_{H}(t,x). \label{eqn 09.11.16:23}
\end{align}
Therefore, if we take $c=\left(\mathcal{T}g_{3}\right)_{Q}$, by \eqref{4times}, \eqref{eqn 09.11.16:11}, \eqref{eqn 09.11.16:12}, and \eqref{eqn 09.11.16:23} it follows that
\begin{align*}
&\aint_{Q}|\cT g- (\cT g)_{Q}|^{2}dyds 
\\
& \leq C \aint_{Q}|\cT g_{1}(s,y)|^{2}dyds + C \aint_{Q}|\cT g_{2}(s,y)|^{2}dyds 
\\
&\quad+ C \aint_{Q}|\cT g_{3}(s,y)-(\cT g_{3})_{Q}|^{2}dyds
\\
& \leq C \bM_{t}\bM_{x}|g|^{2}_{H}(t,x),
\end{align*}
and thus \eqref{sharpmax before sup} is proved. 
By (\ref{sharpmax before sup}) and Jensen's inequality,
\begin{equation}
 \label{ffer}
(\cT g)^{\#}(t,x)\leq C \left(\bM_{t}\bM_x\left| g\right| _{H}^{2}(t,x) \right)^{1/2}, \quad \forall \, (t,x).
\end{equation}
Therefore, by  Fefferman-Stein Theorem (e.g. \cite[Theorem IV.2.2]{stein1993harmonic}) and \eqref{ffer}, 
\begin{eqnarray*}
\|\cT g\|^p_{L_p(\bR^{d+1})}&\leq& C \|(\cT g)^{\#}\|^p_{L_p(\bR^{d+1})}
\leq C \|\bM_{t}\bM_x\left| g\right| _{H}^{2}\|^{p/2}_{L_{p/2}(\bR^{d+1})}.
\end{eqnarray*}
Next we use Hardy-Littlewood Maximal theorem (e.g. \cite[Theorem I.3.1]{stein1993harmonic})  twice with respect to time and spatial variables in order, and get
\begin{eqnarray*}
         \label{eqn 4.17.10}
   \int_{\bR^d}      \int_{\bR}  \left(\bM_t \bM_x |g|^2_H \right)^{p/2} dt dx&\leq&C\int_{\bR^d} \int_{\bR} \left(\bM_x|g|^2_H\right)^{p/2} dt dx\\
   &=&C\int_{\bR} \int_{\bR^d } \left(\bM_x|g|^2_H\right)^{p/2} dx dt \\
   &\leq&C \int_{\bR}\int_{\bR^d}( |g|^2_H)^{p/2}dxdt =C\||g|_H\|^p_{L_p(\bR^{d+1})}.
   \end{eqnarray*}
This proves the theorem if $T=\infty$.  For $T<\infty$ take $\xi \in C^{\infty}(\bR)$ such that $0\leq \xi \leq 1$, $\xi=1$ for $t\leq T$ and $\xi=0$ for $t\geq T+\varepsilon$, $\varepsilon>0$. Then it is enough to apply the result for $T=\infty$ with $g \xi$.   Since $\varepsilon>0$ is arbitrary, the theorem is proved.

\subsection{$H^\infty$-calculus} \label{Functional}

First, we provide some definitions related to $H^\infty$-calculus.  For $\eta>0$, let
$$\Sigma_\eta:=\{z\in \bC\setminus\{0\}: |\arg(z)|<\eta\}.
$$
 By $H^p(\Sigma_\eta)$ ($1\leq p \leq \infty$), we denote the (complex) Banach space of all holomorphic functions $f:\Sigma_\eta\to \bC$ satisfying
$$
\|f\|_{H^p(\Sigma_\eta)}:=\sup_{|\nu|<\eta} \|f(e^{i\nu}t)\|_{L_p(\bR_+,\frac{dt}{t})}<\infty.
$$
Let $A$ be a linear operator on a Banach space $X$. We say that $z$ is in the resolvent set $\rho(A)$ of $A$ if the range of $A_z:=z-A$ is dense in $X$ and $A_z$ has a continuous inverse.
Here, for $z\in \rho(A)$, we can define $R(z,A):=(z-A)^{-1}$. We say that a linear operator $A$ is sectorial if there exists $\omega\in(0,\pi)$ such that the spectrum $\sigma(A):=\bC\setminus \rho(A)$ is contained in $\overline{\Sigma_\omega}$ and
$$
\sup_{z\in (\overline{\Sigma_\omega})^c} \| z R(z,A)\|<\infty.
$$
In this case we say $A$ is $\omega$-sectorial.
The infimum of all $\omega$ such that $A$ is $\omega$-sectorial is called the angle of sectoriality of $A$ and is denoted by $\omega(A)$.

Let $A$ be a sectorial operator with angle of sectoriality $\omega(A)$. For functions $f\in H^1(\Sigma_\omega)$,  denote
$$ 
f(A):=\frac{1}{2\pi i}\int_{\partial \Sigma_\nu} f(z)R(z,A)dz
$$
where $\omega(A)<\nu<\sigma$ is chosen arbitraraily. It is well known (see \cite[Section 10.2]{Veraar book2}) that the definition of $f(A)$ is independent of the choice of $\nu$.
For a constant $\sigma\in  (\omega(A), \pi)$, we say that the operator $A$  has a bounded $H^\infty(\Sigma_\sigma)$-calculus if there exists a constant $C>0$ such that
$$
\|f(A)\|\leq C\|f\|_{L_\infty}, \quad f\in H^1(\Sigma_\sigma)\cap H^\infty(\Sigma_\sigma).
$$
We define 
$$
\omega_{H^\infty}(A):=\inf\{ \sigma\in (\omega(A), \pi)\,:\, A \text{ has a bounded } H^\infty(\Sigma_\sigma)\text{-calculus} \},
$$
and we say that $A$ has a bounded $H^{\infty}$-calculus of angle $\omega_{H^\infty}(A)$. For instance, the Laplace operator $-\Delta$ has a bounded $H^\infty$-calculus on $L_p(\bR^d)$ of angle $0$ (see e.g. \cite[Theorem 10.2.25]{Veraar book2}).

Now we are ready to prove the following:
\begin{thm} \label{bounded H}
Let $\phi$ be a Bernstein function. Then, $-\phi(\Delta)=\phi(-\Delta)$ has a bounded $H^\infty$-calculus on $L_p(\bR^d)$ of angle $0$.
\end{thm}

\begin{proof}

Note that $\phi$ can be extended to a holomorphic function which maps $\bC_{+}:=\{z\in \bC : Re(z)>0\}$ into itself and satisfies $\phi(\Sigma_\sigma)\subset \Sigma_\sigma$ for any $\sigma\in[0,\pi)$ (see \cite[Proposition 3.3]{Resolvent}). Hence, \cite[Theorem 1.1]{Resolvent} yields that $-\phi(\Delta)=\phi(-\Delta)$ is $0$-sectorial.

Next, for $f\in H^\infty(\Sigma_\sigma)$ with $0<\sigma<\pi$, define
\begin{gather*}
\Psi_{\Delta}(f)\zeta  := \cF^{-1} \left[ f(|\cdot|^2)\hat{\zeta}(\cdot) \right], \quad \zeta \in \cS(\bR^d).
\end{gather*}
For any multi indices $\alpha\in \{0,1\}^d$ and $\xi\in \Sigma_\sigma$, one can easily show that
$$
|\xi|^{|\alpha|} D^\alpha( f(|\xi|^2)) = |\xi|^{|\alpha|} 2^{|\alpha|} \xi^\alpha (D^\alpha f)(|\xi|^2)= (2\xi/|\xi|)^\alpha (|\xi|^2)^{|\alpha|} (D^\alpha f)(|\xi|^2).
$$
Also, by the Cauchy formula,
$$
(|\xi|^2)^{|\alpha|} (D^\alpha f)(|\xi|^2) \leq \frac{1}{2\pi} \int_{\Gamma_{\sigma'}} \frac{|\xi|^{2|\alpha|} |f(z)|}{|z-|\xi|^2|^{|\alpha|+1}} |dz| \leq \frac{C}{2\pi} \int_{\Gamma_{\sigma'}} \frac{1}{|z-1|^{|\alpha|+1}}|dz|<\infty,
$$
where $\Gamma_{\sigma'}:=\{z\in\bC\setminus\{0\} : arg(z)=\pm \sigma' \}$ with $\sigma'\in(0,\sigma)$. Thus, by Mihlin's multiplier theorem (see e.g. \cite[Theorem 5.5.10]{Veraar book1}), $\Psi_\Delta(f)$ is a bounded operator on $L_p$. Since  $f\circ \phi \in H^\infty(\Sigma_\sigma)$, by considering $f\circ \phi$ instead of $f$, we find that the operator
$$
\Psi(f)\zeta:= \cF^{-1} \left[ f(\phi(|\cdot|^2))\hat{\zeta}(\cdot) \right], \quad \zeta \in \cS(\bR^d),
$$
is a bounded operator on $L_p$. Moreover, by following the proof of \cite[Theorem 10.2.25]{Veraar book2}, one can check that the mapping $f\to \Psi(f)$ satisfies the assumptions of \cite[Theorem 10.2.14]{Veraar book2}. Thus, by \cite[Theorem 10.2.14]{Veraar book2}, $-\phi(\Delta)$ has a  bounded $H^\infty(\Sigma_\sigma)$-calculus. Since $\sigma>0$ is arbitrary, the theorem is proved.

\end{proof}

Now we define an operator associated with $(\alpha,\beta,-\phi(\Delta))$.
Let
\begin{align*}
T_{\alpha,\beta}(t)v:= \frac{1}{2\pi i}\int_{\Gamma_{1,\psi}} e^{\lambda t}(\lambda^\alpha-\phi(\Delta))^{-1}\lambda^{\beta-1}v d\lambda, \quad t>0,
\end{align*}
for $v\in L_p$, $\psi\in(\frac{\pi}{2},\pi)$, and
$$
\Gamma_{1,\psi}:=\{e^{it} : |t|\leq \psi \} \cup \{\rho e^{i\psi} : 1<\rho<\infty \} \cup \{\rho e^{-i\psi} : 1<\rho<\infty \}.
$$

We have the following representation for solution (cf. \cite{desch2011p}).

\begin{lem} \label{lem:solution func representation}
 For  given $g\in \bH_0^\infty(T,l_2)$, the function
\begin{align} \label{Func sto sol re}
u(t,x):=\sum_{k=1}^\infty \int_0^t T_{\alpha, \beta}(t-s) g^k(s,x) dw_s^k.
\end{align}
is in $\bH_p^{\phi,2}(T)$ and satisfies \eqref{eqn 4.10} in the sense of distributions.
\end{lem}

\begin{proof}
We first show that $u\in \bH_p^{\phi,2}(T)$.
By the Burkholder-Davis-Gundy inequality,  
\begin{align*}
\|u\|^p_{\bL_p(T)} &\leq C(p) \bE \Big\|  \left( \int_{0}^{t} |T_{\alpha, \beta}(t-s)g(s)(x) |^{2}_{l_2}ds \right)^{1/2} \Big\|^p_{L_p((0,T)\times \bR^d)} \nonumber
\\
&\leq C \|g\|^p_{\bL_p(T, l_2)}.
\end{align*}
For the second inequality above we used  \cite[Lemma 5.6]{desch2011p}.

Applying (5.16) in \cite{desch2011p}, we get
\begin{align} \label{ineq 0310-1}
\phi(\Delta) u(t,x)=\sum_{k=1}^\infty \int_0^t T_{\alpha, \beta}(t-s) \phi(\Delta)g^k(s,x) dw_s^k, \quad \forall \, t>0.
\end{align}
Hence, using  \cite[Lemma 5.6]{desch2011p} again,  we have (recall $g\in \bH_0^\infty(T,l_2)$)
\begin{align*}
\|\phi(\Delta)u\|_{\bL_p(T)} \leq C \|\phi(\Delta)g\|_{\bL_p(T, l_2)} <\infty.
\end{align*}
Therefore, by Lemma \ref{H_p^phi,gamma space} $(iv)$, we get $u\in \bH_p^{\phi,2}(T)$.

Next we show that $u$ satisfies \eqref{eqn 4.10}. By (5.17) in \cite{desch2011p}, 
\begin{align} \label{ineq 0310-2}
u(t,x) &= \frac{1}{\Gamma(\alpha)} \int_0^t (t-s)^{\alpha-1} \phi(\Delta) u(s,x)ds \nonumber
\\
&\quad + \frac{1}{\Gamma(\alpha-\beta+1)} \sum_{k=1}^\infty \int_0^t (t-s)^{\alpha-\beta} g^k(s,x)dw_s^k
\end{align}
for all $t>0$, a.e. on $\bR^d\times \Omega$.
Let $\varphi\in\cS(\bR^d)$. Multiplying both sides of \eqref{ineq 0310-2} by $\varphi$ and applying (stochastic) Fubini's theorem (see e.g. \cite[Lemma 2.7]{Krylov Fubini}), one can easily show that $u$ satisfies \eqref{eqn 4.10} in the sense of distributions. The lemma is prvoed.
\end{proof}

\begin{thm} \label{thm:L-P func}
Let $p\in[2,\infty)$, $g\in \bH_0^\infty(T,l_2)$, and let $u(t,x)$ be taken from Lemma \ref{lem:solution func representation}. Then we have
\begin{equation} \label{Func esti}
\|\phi^{c_{1}/2}(\Delta)u\|_{\mathbb{L}_{p}(T)}\leq C\| g\|_{\mathbb{L}_{p}(T,l_{2})},
\end{equation}
where $c_1:=2-(2\beta-1)/{\alpha}$ and $C$ is a constant independent of $g$ and $T$.
\end{thm}

\begin{proof}
Due to Theorem \ref{bounded H}, the operator $\phi(\Delta)$ satisfies the assumption in \cite[Theorem 3.1]{desch2013maximal}. Thus, \eqref{Func esti} is a direct consequence of \cite[Theorem 3.1]{desch2013maximal} if we take $(\alpha,1+\alpha-\beta,0,c_1/2)$ in place of $(\alpha,\beta,\eta,\theta)$ therein.
\end{proof}

\begin{rem}
(i) Actually, Theorem \ref{bounded H} and \cite[Theorem 3.1]{desch2013maximal} together yield $L_p$-estimates for $D_t^\eta \phi(\Delta)^{\theta} u$, where 
$\eta\in(-1,1)\cap (-\beta+1/2,\alpha-\beta+1/2)$ and  $\theta:=c_1/2-\eta/\alpha$. This also can be obtained using Krylov's analytic approach if one considers $q_{\alpha,\beta+\eta}^{\theta}=q_{\alpha,\beta+\eta}^{c_1/2-\eta/\alpha}$ in place of $q_{\alpha,\beta}^{c_1/2}$.

(ii) As in  \cite[Section 8G]{Veraar Singular operator}, one can also obtain sharp estimates of $D_t^\eta \phi(\Delta)^{\theta} u$ in the  space $L_q(L_p)$ given with appropriate weights. Here $p\geq 2, q>2$. \end{rem}

\mysection{Proof of Theorem \ref{thm:main results}}
         \label{sec proof}

The following lemma  is used to  estimate solutions of SPDEs when  $\beta <1/2$.
\begin{lem} \label{lem:pde approach}
Let $\gamma\in\mathbb{R}$, $p \geq 2$, $\beta<\frac{1}{2}$, and $g\in\mathbb{H}_{p}^{\phi,\gamma}(T,l_{2})$. Then
for any  $t \in [0,T]$,
$$
\mathbb{E}\int_{0}^{t}\left\Vert\sum_{k=1}^{\infty} \partial_{r}^{\beta}\int_{0}^{\cdot}g^{k}(s,\cdot)dw_{s}^{k}\right\Vert _{H_{p}^{\phi,\gamma}}^{p}dr
\leq C(d,p,\beta,T) I_{t}^{1-2\beta}\|g\|_{\mathbb{H}_{p}^{\phi,\gamma}(\cdot,l_{2})}^{p}(t).
$$
In particular,
$$
\mathbb{E} \int_{0}^{r}\left\Vert\sum_{k=1}^{\infty} \partial_{t}^{\beta}\int_{0}^{\cdot}g^{k}(s,\cdot)dw_{s}^{k}\right\Vert _{H_{p}^{\phi,\gamma}}^{p}dr
\leq C \|g\|_{\mathbb{H}_{p}^{\phi,\gamma}(t,l_{2})}^{p}.
$$
\end{lem}

\begin{proof}
Due to the isometry $(1-\phi(\Delta))^{\gamma/2}:H^{\phi,\gamma}_p \to L_p$, it is enough to prove the case $\gamma=0$. In this case  it is a  consequence of \cite[Lemma 4.1]{kim16timefractionalspde}. The lemma is proved.
\end{proof}

Recall
$$
c_0=\frac{(2\beta-1)^+}{\alpha} +\kappa 1_{\beta=1/2} \quad \in [0,2),
$$
where $\kappa>0$ is a fixed constant.

\begin{lem} \label{thm:model eqn}
Let $\gamma\in\mathbb{R}, p\geq 2$, $\alpha\in (0,1)$ and $\beta<\alpha+1/2$. Then for any  $g\in\mathbb{H}_{p}^{\phi,\gamma+c_{0}}(T,l_{2})$,  the linear  equation 
\begin{equation}
 \label{eqn sto}
\partial^{\alpha}_tu=\phi(\Delta)u+\sum_{k=1}^{\infty} \partial^{\beta}_t \int^t_0g^k dw^k_s, \quad t>0; \quad u(0,\cdot)=0
\end{equation}
 has a unique solution $u\in\bH_{p}^{\phi, \gamma+2}(T)$ in the sense of distributions, and for this solution we have
\begin{equation} \label{eq:model a priori}
\|u\|_{\bH_{p}^{\phi,\gamma+2}(T)}\leq C\|g\|_{\mathbb{H}_{p}^{\phi,\gamma+c_{0}}(T,l_{2})},
\end{equation}
where $C=C(\alpha,\beta,d,p,\delta_0,\kappa_0, \gamma,\kappa,T)$.
Furthermore, if $\beta > 1/2$ then
\begin{equation} \label{model t ind}
\|\phi(\Delta)u\|_{\mathbb{H}_{p}^{\phi,\gamma}(T)}\leq C\| \phi(\Delta)^{c_0/2}g\|_{\mathbb{H}_{p}^{\phi,\gamma}(T,l_{2})},
\end{equation}
where $C=C(\alpha,\beta,d,p,\delta_0,\kappa_0, \gamma)$ is independent of $T$.
\end{lem}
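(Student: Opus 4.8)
The plan is to first reduce to the case $\gamma=0$, and then to prove existence, the a~priori estimate, and uniqueness in turn, separating the ranges $\beta>1/2$ and $\beta\le1/2$. Since $(1-\phi(\Delta))^{\gamma/2}$ commutes with $\partial_t^\alpha$, $\partial_t^\beta$ and $\phi(\Delta)$ and, by Lemma~\ref{H_p^phi,gamma space}, is an isometry between the relevant pairs of function spaces, Remark~\ref{remark 4.7.3} shows that if $u$ solves \eqref{eqn sto} with data $g$ then $(1-\phi(\Delta))^{\gamma/2}u$ solves \eqref{eqn sto} with data $(1-\phi(\Delta))^{\gamma/2}g$, and each of the three asserted estimates for general $\gamma$ follows from its counterpart for $\gamma=0$. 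So I would assume $\gamma=0$ from here on.

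For existence I would start with $g\in\bH^\infty_0(T,l_2)$, for which Lemma~\ref{lem:solution representation} produces the explicit solution $u$ of \eqref{sto sol re} lying in $\bH^{\phi,2}_p(T)$. The crux is the bound on $\|\phi(\Delta)u\|_{\bL_p(T)}$. If $1/2<\beta<\alpha+1/2$, I would observe that $\phi(\Delta)^{c_0/2}u$ again solves the model equation, now with forcing $\phi(\Delta)^{c_0/2}g$, and invoke the a~priori estimate \eqref{a priori} (which is itself a consequence of \eqref{sto sol re}, the Burkholder--Davis--Gundy inequality and Theorem~\ref{thm:L-P} with $H=l_2$); since $c_1+c_0=2$, applying it to $\phi(\Delta)^{c_0/2}u$ gives precisely \eqref{model t ind}, with a $T$-independent constant. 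If $\beta\le1/2$, Theorem~\ref{thm:L-P} is not applicable, but then $c_0=\kappa 1_{\beta=1/2}$, and Lemma~\ref{lem:pde approach} shows that the noise term $\partial_t^\beta\sum_k\int_0^t g^k(s,\cdot)\,dw_s^k$ already lies in $\bL_p(T)$ with norm controlled by $\|g\|_{\bH^{\phi,c_0}_p(T,l_2)}$ (at $\beta=1/2$ the extra $\kappa$ of spatial smoothness absorbs the borderline factor $I_t^{1-2\beta}$). One is then reduced to a time-fractional equation with spatial operator $\phi(\Delta)$ and an $\bL_p(T)$-inhomogeneity, whose maximal $L_p$-regularity follows from the kernel bounds of Section~\ref{section3} exactly as in the local case $\phi(\Delta)=\Delta$.

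To pass from this gradient-type bound to the full $\bH^{\phi,2}_p(T)$-norm, I would feed $f=\phi(\Delta)u\in\bL_p(T)$ and $g$ into Proposition~\ref{prof 4.7}(ii), which estimates $\|u\|_{\bL_p(t)}^p$ by $C\int_0^t(t-s)^{\theta-1}\bigl(\|\phi(\Delta)u\|_{\bL_p(s)}^p+\|g\|_{\bL_p(s,l_2)}^p\bigr)\,ds$; combining this with the bound on $\|\phi(\Delta)u\|_{\bL_p(T)}$ from the previous step and with the norm equivalence $\|u\|_{H^{\phi,2}_p}\sim\|u\|_{L_p}+\|\phi(\Delta)u\|_{L_p}$ (Lemma~\ref{H_p^phi,gamma space}(ii),(iv)) yields \eqref{eq:model a priori}. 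Using \eqref{eq:model a priori} and the density of $\bH^\infty_0(T,l_2)$ in $\bH^{\phi,c_0}_p(T,l_2)$, I would then pass to the limit to obtain a solution for every $g\in\bH^{\phi,c_0}_p(T,l_2)$ satisfying \eqref{eq:model a priori} (and \eqref{model t ind} when $\beta>1/2$). For uniqueness, the difference $w$ of two solutions solves $\partial_t^\alpha w=\phi(\Delta)w$ with $w(0,\cdot)=0$ and no noise term, so for a.e.\ $\omega$ it is a solution of the homogeneous deterministic equation; testing against $\varphi\in\cS(\bR^d)$, using $(\phi(\Delta)w,\varphi)=(w,\phi(\Delta)\varphi)$, and a fractional Gronwall argument (the kernel $(t-s)^{\alpha-1}$ being locally integrable) forces $w\equiv0$, and undoing the reduction finishes the proof.

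I expect the main difficulty to be organizational rather than a single hard estimate, since the decisive harmonic-analytic ingredient---the $L_p$-boundedness of $\mathcal T$---is already in hand as Theorem~\ref{thm:L-P}. The points that require care are: converting the regularity gain $\phi(\Delta)^{c_1/2}$ from \eqref{a priori} into the \emph{sharp}, $T$-independent estimate \eqref{model t ind} via commutation with $\phi(\Delta)^{c_0/2}$; handling the range $\beta\le1/2$ and especially the borderline $\beta=1/2$ through Lemma~\ref{lem:pde approach}; and arranging the argument so that the a~priori estimate applies to an \emph{arbitrary} solution in $\bH^{\phi,2}_p(T)$, which is precisely what reduces uniqueness to the classical homogeneous case.
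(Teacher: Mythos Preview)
Your reduction to $\gamma=0$, the handling of the case $\beta>1/2$ via Theorem~\ref{thm:L-P}, the upgrade to the full $\bH^{\phi,2}_p$-norm through Proposition~\ref{prof 4.7}(ii), and the approximation step are all essentially the paper's argument. The uniqueness you sketch via a fractional Gronwall argument works in principle, though the paper simply quotes the deterministic uniqueness result for $\partial_t^\alpha w=\phi(\Delta)w$ from \cite{kim2020nonlocal}, which is cleaner.

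The genuine gap is your treatment of the borderline $\beta=1/2$. Lemma~\ref{lem:pde approach} is stated only for $\beta<1/2$, and it does not extend to $\beta=1/2$: the underlying BDG estimate involves $\int_0^s (s-r)^{-2\beta}|g(r)|^2_{l_2}\,dr$, which diverges when $2\beta=1$. Your parenthetical claim that ``the extra $\kappa$ of spatial smoothness absorbs the borderline factor $I_t^{1-2\beta}$'' is not correct as stated; spatial regularity does not by itself cure this temporal singularity. The paper's device is instead a \emph{temporal} lift: set $\delta=\kappa\alpha/2$ and $\tilde\beta=1/2+\delta\in(1/2,\alpha+1/2)$, solve the equation with $\tilde\beta$ in place of $\beta$ using Case~1 (where the relevant $c_0$ equals $(2\tilde\beta-1)/\alpha=\kappa$, matching the available regularity of $g$), and observe that $u=I_t^{\delta}v$ by uniqueness, so that \eqref{eq:Lp continuity of I} transfers the estimate from $v$ to $u$. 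This is precisely why the constant $c_0$ carries the extra $\kappa 1_{\beta=1/2}$: the $\kappa$ is spent to make room for the lift in $\beta$, not to absorb anything in Lemma~\ref{lem:pde approach}.
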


\begin{proof}
Due to Remark \ref{remark 4.7.3} and  Lemma \ref{H_p^phi,gamma space} (ii), without loss of generality we may assume $\gamma=0$.

The uniqueness is a consequence of the corresponding result for the deterministic equation, \cite[Theorem 8.7 (a)]{Pruss}. Indeed, if $u$ is a solution to the equation with $g=0$, then for each fixed $\omega$, the  function $u(\omega,\cdot,\cdot)$ satisfies the deterministic equation 
\begin{equation}
   \label{deterministic}
\partial^{\alpha}_tv=\phi(\Delta)v, \quad t>0, x\in \bR^d; \quad v(0,\cdot)=0,
\end{equation}
and we conclude $u(\omega,\cdot,\cdot)\equiv 0$. Therefore,  it is sufficient to prove  the
existence result together with estimates \eqref{eq:model a priori}
and \eqref{model t ind}. 

{\bf{Step 1}}. First, assume $g\in\mathbb{H}_{0}^{\infty}(T,l_{2})$.
 Define $u$ by  \eqref{Func sto sol re}. 
Then by Lemma \ref{lem:solution func representation},
$u\in \bH^{\phi,2}_{p}(T)$  becomes a solution to  equation \eqref{eqn sto}.
Now we  prove the estimates. We divide the proof according to the range of $\beta$.

\textit{Case 1}. $\beta > \frac{1}{2}$.
\\
We first show \eqref{model t ind}. Denote
$$
v=\phi(\Delta)^{c_0 /2}u, \quad \bar{g}=\phi(\Delta)^{c_0 /2}g .
$$
By \eqref{ineq 0310-1} and Theorem \ref{thm:L-P func},
\begin{align*}
\|\phi(\Delta) u\|^p_{\bL_p(T)}=\left\Vert \phi(\Delta)^{c_1/2}v\right\Vert _{\mathbb{L}_{p}(T)}^{p}
 & \leq C\mathbb{E}\int_{0}^{T}\int_{\mathbb{R}^{d}}\left|\bar{g}\right|^{p}_{l_2}dxdt\\
 & =C\|\phi(\Delta)^{c_0/2} g\|^p_{\bL_{p}(T,l_{2})}.
\end{align*}
Thus \eqref{model t ind} is proved. 
Furthermore, by \eqref{eq:solution space estimate 1},
\begin{align*}
\|u\|^p _{\bL_{p}(T)}
&\leq C \int_{0}^{T}(T-s)^{\theta-1} \left(\|\phi(\Delta) u\|^p_{\bL_p(s)}
+\|g\|^p_{\bL_p(s,l_2)} \right)ds
\\
&\leq C\int_{0}^{T}(T-s)^{\theta-1} \|g\|^p_{\bH^{\phi,c_0}_p(s,l_2)} ds
\\
&\leq C\|g\|^p_{\bH_p^{\phi,c_0}(T,l_2)}\int^T_0(T-s)^{\theta-1}ds \leq C\|g\|^p_{\bH_p^{\phi,c_0}(T,l_2)}.
\end{align*}
Thus we have \eqref{eq:model a priori}.

\textit{Case 2}. $\beta<\frac{1}{2}$.
\\
In this case,  ${c_0}=0$. By Lemma \ref{fracint of dw} (iii),   $u$ satisfies
$$
\partial_{t}^{\alpha}u=\phi(\Delta) u+\bar{f},
$$
where
$$
\bar{f}(t)=\frac{1}{\Gamma(1-\beta)}\sum_{k=1}^{\infty} \int^t_0(t-s)^{-\beta}g^k(s)dw^k_s.
$$
Due to \cite[Theorem 8.7 (a)]{Pruss} and Lemma \ref{lem:pde approach},
$$
\|u\|^p_{\bH^{\phi,2}_p(T)}\leq C\|\bar{f}\|^p_{\bL_p(T)}\leq C \|g\|^p_{\bL_p(T,l_2)}.
$$

\textit{Case 3:} $\beta=\frac{1}{2}$.
\\
Put $\delta=\frac{\kappa \alpha}{2}$ and  $\tilde{\beta}=\frac{1}{2}+\delta$. Then, $0<\delta<\alpha$
and $\frac{1}{2}<\tilde{\beta}<2$. Define $v$  by  \eqref{Func sto sol re} with $\tilde{\beta}$ instead of $\beta$.
By  the result from Case 1 with ${c_0}=(2 \tilde \beta -1)/\alpha =\kappa $, 
 $v\in \bH^{\phi,2}_{p}(T)$  satisfies
\begin{align*}
\partial_{t}^{\alpha}v=\phi(\Delta) v+\sum_{k=1}^{\infty}\partial_{t}^{\tilde{\beta}}\int_{0}^{t}g^{k} dw_{s}^{k}, \quad ; \quad v(0,\cdot)=0,
\end{align*}
and it also holds that 
$$
\left\Vert v\right\Vert _{\mathbb{H}_{p}^{\phi,2}(T)}\leq C\left\Vert g\right\Vert _{\mathbb{H}_{p}^{\phi,c_{0}}(T,l_{2})}.
$$
Note that  $I^{\delta}_tv$ also satisfies \eqref{eqn sto}. Thus, by  the uniqueness of solution, we conclude that $I_{t}^{\delta}v=u$.
Therefore, by the result for the case $\beta>1/2$ and \eqref{eq:Lp continuity of I},
\begin{align*}
\left\Vert u\right\Vert _{\mathbb{H}_{p}^{\phi,2}(T)} & =\|I_{t}^{\delta}v\|_{\mathbb{H}_{p}^{\phi,2}(T)} \leq C \left\Vert v\right\Vert _{\mathbb{H}_{p}^{\phi,2}(T)}\leq C\left\Vert g\right\Vert _{\mathbb{H}_{p}^{\phi,c_{0}}(T,l_{2}).}
\end{align*}
Thus, the lemma is proved if $g\in\mathbb{H}_{0}^{\infty}(T,l_{2})$.

{\bf{Step 2}}. General case. For given 
 $g\in\mathbb{H}_{p}^{{\phi,c_0}}(T,l_{2})$, we take a sequence  $g_{n}\in\mathbb{H}_{0}^{\infty}(T,l_{2})$ so that
$g_{n}\rightarrow g$ in $\mathbb{H}_{p}^{\phi,{c_0}}(T,l_{2})$.
Define $u_{n}$ using \eqref{Func sto sol re} with $g_{n}$ in place of $g$.
Then
\begin{equation*}
\|u_{n}\|_{\mathbb{H}_{p}^{\phi,2}(T)}\leq C\|g_{n}\|_{\mathbb{H}_{p}^{\phi,{c_0}}(T,l_{2})},\label{eq:model thm proof 1}
\end{equation*}
\begin{equation*}
 \|u_{n}-u_{m}\|_{\mathbb{H}_{p}^{\phi,2}(T)}\leq C \|g_{n}-g_{m}\|_{\mathbb{H}_{p}^{\phi,{c_0}}(T,l_{2})}.
\end{equation*}
Thus, $u_n$ converges to a function  $u\in \bH^{\phi,2}_{p}(T)$.  Considering \eqref{eq:solution space 2} corresponding to $u_n$ and using Lemma \ref{fracint of dw} (iv), we conclude that $u$ satisfies equation \eqref{eqn sto} in the sense of distributions. The estimates of $u$ also easily follow. 
The lemma is proved.
\end{proof}

\begin{rem}
The uniqueness result in Lemma \ref{thm:model eqn} yields that two representations \eqref{sto sol re} and \eqref{Func sto sol re} coincide under Assumption \ref{ass bernstein}.
\end{rem}

Now we are ready to   prove Theorem \ref{thm:main results}.

\vspace{3mm}

{\textbf{Proof of Theorem \ref{thm:main results}}}

\textbf{Step 1 (linear equation)}. Let  $f$ and $g$ be independent of $u$.

As before, due to Remark \ref{remark 4.7.3} and  Lemma \ref{H_p^phi,gamma space} (ii), we may assume $\gamma=0$. Also, by the uniqueness of deterministic equation \eqref{deterministic}, we only need to prove the existence result and estimates of the solution.

\textit{Case 1:} Let  $g\equiv 0$. Then,  roughly speaking, using \cite[Theorem 2.8]{kim2020nonlocal}, for each fixed $\omega$ one can solve the deterministic equation
\begin{equation}
\label{eqn 4.20.1}
\partial^{\alpha}_t u^{\omega}(t,x)=\phi(\Delta)u^{\omega}(t,x)+f(\omega,t,x), \quad t>0, x\in \bR^d;\quad u^{\omega}(0,x)=0,
\end{equation}
and can define $u(\omega,t,x)=u^{\omega}(t,x)$ so that $u$ solves the equation
\begin{equation*}
\partial^{\alpha}_t u=\phi(\Delta)u+f, \quad t>0;\quad u(0,x)=0
\end{equation*}
 on $\Omega \times [0,T]$.
 However, this method may leave the measurability issue. Therefore, we argue as follows. First, assume $f$ is sufficiently smooth, that is, let $f$ be of the type
$$
f(t,x)=\sum_{i=1}^{n(k)}1_{(\tau_{i-1},\tau_{i}]}(t)h^i (x), \quad h^{i}\in C_c^\infty(\mathbb{R}^{d})
$$
where $0\leq \tau_0 \leq \tau_1 \leq \cdots \leq \tau_{n}$ are bounded  stopping times.  Define
$$
u_1(t,x)=\int^t_0 \int_0^\infty P_r^0 f(s,\cdot) \varphi_{\alpha,1}(t,r)  drds,
$$
where $\varphi_{\alpha,1}$ is defined in \eqref{varphi} and  $\{P_t^0; t\geq0\}$ is the  strongly continuous semigroup generated by $\phi(\Delta)$ (cf. \cite{Chen Poisson}). Due to \eqref{eqn 200824 1516}, the above integral is well defined. Thus, we have the desired measurability of $u_1$. Moreover, \cite[Proposition 2.3]{Chen Poisson} implies that $u_1$ becomes a solution to \eqref{eqn 4.20.1} in the sense of distributions and $u_1 \in L_\infty(\Omega\times[0,T], \cP;H_p^{\phi,2}) \subset \bH_p^{\phi,2}(T)$. Also, estimate \eqref{eq: a priori estimate non-div} for this $u_1$ follows from \cite[Theorem 8.7 (a)]{Pruss}.

For general $f$, one can use the standard approximation argument as in the proof of Lemma \ref{thm:model eqn} (see Step 2 there).  

\textit{Case 2:} Let $g\not\equiv 0$.  Take $u_1\in \bH^{\phi,2}_p(T)$ from Case 1. Also take $u\in \bH^{\phi,2}_p(T)$ from Lemma \ref{thm:model eqn}. Then, thanks to the linearity of the equations, $v:=u_1+u$ satisfies
$$
\partial^{\alpha}_tv=\phi(\Delta)v+f+\sum_{k=1}^{\infty}\partial^{\beta}_t \int^t_0 g^k dw^k_s, \quad t>0; \quad v(0,\cdot)=0,
$$
and estimate \eqref{eq: a priori estimate non-div} for $v$ follows from those for $u_1$ and $u$. Therefore, the theorem is proved for the linear equation.

\textbf{Step 2 (non-linear equation)}.

We first prove the uniqueness result of the equation.  
\begin{equation*}
\partial^{\alpha}_tu=\phi(\Delta)u+f(u)+\sum_{k=1}^{\infty}g^k(u)dw^k_t, \quad t>0; \quad u(0,\cdot)=0.
\end{equation*}
Let  $u_1, u_2 \in \bH^{\phi,\gamma+2}_{p}(T)$ be two solutions to the equation. 
Then, $\tilde{u}:=u_1-u_2$ satisfies
$$
\partial^{\alpha}_t \tilde{u}=\phi(\Delta)\tilde{u}+f(u_1)-f(u_2)+\sum_{k=1}^{\infty}(g^k(u_1)-g^k(u_2))dw^k_t, \quad t>0; \quad \tilde{u}(0,\cdot)=0.
$$
By the continuity of $f$ and $g$ (Assumption \ref{asm 11.15.13:16} with $\varepsilon=1$), for each $t\leq T$,
\begin{eqnarray}
\nonumber
&&\|\phi(\Delta)(u_1-u_2)\|_{\bH^{\phi,\gamma}_p(t)} +\|f(u_1)-f(u_2)\|_{\bH^{\phi,\gamma}_p(t)}\\
&&+ \|g(u_1)-g(u_2)\|_{\mathbb{H}^{\phi,\gamma+c_{0}}_{p}(t,l_{2})} 
\leq C\|u_1-u_2\|_{\bH^{\phi,\gamma+2}_p(t)}.   \label{eqn 4.20.6}
\end{eqnarray}
 Also, by the result for the linear case and Assumption \ref{asm 11.15.13:16},  any $\varepsilon>0$ and $t\leq T$, we have
\begin{eqnarray*}
&& \|u_1- u_2\|^{p}_{\bH^{\phi,\gamma+2}_{p}(t)}   \\ 
&&\leq
 C ( \|f(u_1)-f(u_2)\|^{p}_{\mathbb{H}^{\phi,\gamma}_{p}(t)} + \|g(u_1)-g(u_2)\|^{p}_{\mathbb{H}^{\phi,\gamma+c_{0}}_{p}(t,l_{2})})
\\
&& \leq C \varepsilon^{p}  \|u_1-u_2\|^{p}_{\mathbb{H}^{\phi,\gamma+2}_{p}(t)}+CN_{0}(\varepsilon)\|u_1-u_2\|^{p}_{\mathbb{H}^{\phi,\gamma}_{p}(t)}
\\
&&\leq  C \varepsilon^{p}\|u_1-u_2\|^{p}_{\bH^{\phi,\gamma+2}_{p}(t)} +CN_{0}(\varepsilon)\int_{0}^{t}(t-s)^{\theta-1}\|u_1-u_2\|^{p}_{\bH^{\phi,\gamma+2}_{p}(s)}ds.
\end{eqnarray*}
For the last inequality above we used  \eqref{eq:solution space estimate 1} and \eqref{eqn 4.20.6}. Now we take $\varepsilon>0$ so that $C\varepsilon^p<1/2$, and we  conclude $u_1=u_2$ due to the fractional Gronwall lemma (see \cite[Corollary 1]{YGD}). The uniqueness is proved.

 Next we prove the existence result.   Let $u^0\in \bH^{\phi,\gamma+2}_p(T)$ denote the solution obtained in Step 1 corresponding to the inhomogeneous terms $f(0)$ and $g(0)$. For $n\geq 0$, using  the result of Step 1,   we   define $u^{n+1}\in \bH^{\phi,\gamma+2}_{p}(T)$  as the solution to  the equation
\begin{equation}
\label{eqn 4.21.5}
\partial^{\alpha}_{t}u^{n+1}=\phi(\Delta)u^{n+1}+f(u^n)+\partial^{\beta}_{t}\sum_{k=1}^{\infty}\int_{0}^{t}g^{k}(u^n)dw^{k}_{s}, \quad t>0;
\quad u(0)=0.
\end{equation}
Then $\tilde{u}^n:=u^{n+1}-u^n$ satisfies
$$
\partial^{\alpha}_t \tilde{u}^n=\phi(\Delta)\tilde{u}^n +f(u^n)-f(u^{n-1})+\sum_{k=1}^{\infty}(g^k(u^n)-g^k(u^{n-1})) dw^k_t, \quad t>0,
$$
with $\tilde{u}^n(0,\cdot)=0$.
By Step 1 and Assumption \ref{asm 11.15.13:16}, for any $\varepsilon>0$ and $t\leq T$, we have
\begin{eqnarray}\label{eqn 10.22.13:14}
 && \|u^{n+1}-u^{n}\|^{p}_{\bH^{\phi,\gamma+2}_{p}(t)}   \nonumber \\
  &&\leq C \left( \|f(u^{n})-f(u^{n-1})\|^{p}_{\mathbb{H}^{\phi,\gamma}_{p}(t)} + \|g(u^{n})-g(u^{n-1})\|^{p}_{\mathbb{H}^{\phi,\gamma+c_{0}}_{p}(t,l_{2})}  \right) \nonumber
\\
&&\leq C\varepsilon^{p}  \|u^{n}-u^{n-1}\|^{p}_{\mathbb{H}^{\phi,\gamma+2}_{p}(t)}+CN_{0}(\varepsilon)\|u^{n}-u^{n-1}\|^{p}_{\mathbb{H}^{\phi,\gamma}_{p}(t)}. \label{eqn 4.21.11}
\end{eqnarray}
In particular, taking $\varepsilon=1$, for any $n\geq 1$, 
\begin{equation}
   \label{eqn 4.21.22}
 \|u^{n+1}-u^{n}\|^{p}_{\bH^{\phi,\gamma+2}_{p}(t)} \leq C \|u^{n}-u^{n-1}\|^{p}_{\bH^{\phi,\gamma+2}_{p}(t)}.
 \end{equation}
Note that $(u^n-u^{n-1})(0,\cdot)=0$. Thus,  by \eqref{eq:solution space estimate 1} and \eqref{eqn 4.21.22},
\begin{eqnarray}
&&\|u^{n}-u^{n-1}\|^{p}_{\mathbb{H}^{\phi,\gamma}_{p}(t)}  \nonumber \\
&& \leq  C \int^t_0 (t-s)^{\theta-1} \Big(\|\phi(\Delta)(u^{n}-u^{n-1})+f(u^{n-1})-f(u^{n-2})\|^p_{\mathbb{H}^{\phi,\gamma}_{p}(s)}   \nonumber \\
&&\qquad  \qquad+\|g(u^{n-1})-g(u^{n-2})\|^p_{\mathbb{H}^{\phi,\gamma}_{p}(s,l_2)} \Big)ds   \nonumber \\
&&\leq C  \int^t_0 (t-s)^{\theta-1}\|u^{n-1}-u^{n-2}\|^p_{\bH^{\phi,\gamma+2}_p(s)} ds.    \label{eqn 4.21.21}
\end{eqnarray}
 Plugging \eqref{eqn 4.21.22} and \eqref{eqn 4.21.21} into  \eqref{eqn 4.21.11}, we get
\begin{eqnarray*}
 \|u^{n+1}-u^{n}\|^{p}_{\bH^{\phi,\gamma+2}_{p}(t)} &\leq&  C \varepsilon^{p}\|u^{n-1}-u^{n-2}\|^{p}_{\bH^{\phi,\gamma+2}_{p}(t)} \\
 &&+CN_{0}\int_{0}^{t}(t-s)^{\theta-1}\|u^{n-1}-u^{n-2}\|^{p}_{\bH^{\phi,\gamma+2}_{p}(s)}ds,  
\end{eqnarray*}
where  $N_{0}=N_0(\varepsilon)$.   Considering $\varepsilon C^{-1/p}$ in place of $\varepsilon$, and repeating the above argument one more time, we get for $t\leq T$,
\begin{eqnarray*}
 && \|u^{n+1}-u^{n}\|^{p}_{\bH^{\phi,\gamma+2}_{p}(t)}   \\
  &&\leq   \varepsilon^{p}\Big(  \varepsilon^{p} \|u^{n-3}-u^{n-4}\|^{p}_{\bH^{\phi,\gamma+2}_{p}(t)} +N_{1}\int_{0}^{t}(t-s)^{\theta-1}\|u^{n-3}-u^{n-4}\|^{p}_{\bH^{\phi,\gamma+2}_{p}(s)}ds \Big)
  \\
  &&+N_{1}\int_{0}^{t}(t-s)^{\theta-1}\Big(   \varepsilon^{p}\|u^{n-3}-u^{n-4}\|^{p}_{\bH^{\phi,\gamma+2}_{p}(s)}\\
  &&\qquad \qquad \qquad \qquad \qquad +N_1 \int_{0}^{r}(s-r)^{\theta-1}\|u^{n-3}-u^{n-4}\|^{p}_{\bH^{\phi,\gamma+2}_{p}(r)}dr \Big)ds.
\end{eqnarray*}
Therefore, by using the identity
\begin{align*}
&\frac{\Gamma(\theta)^{n}}{\Gamma(n\theta+1)}t^{n\theta}
\\
&=\int_{0}^{t}(t-s_{1})^{\theta-1}\int_{0}^{s_{1}}(s_{1}-s_{2})^{\theta-1}\cdots \int_{0}^{s_{n-1}}(s_{n-1}-s_{n})^{\theta-1}ds_{n}\dots ds_{1}
\end{align*}
and repeating above inequality,  for $n\in \bN_0$ 
we get
\begin{equation*}\label{eqn 10.22.16:01}
\begin{aligned}
   &\|u^{2n+1}-u^{2n}\|^{p}_{\bH^{\phi,\gamma+2}_{p}(T)}   
   \\
&\leq \sum_{k=0}^{n} \binom{n}{k}\varepsilon^{(n-k)p}(T^{\theta}N_{1})^{k}\frac{\Gamma(\theta)^{k}}{\Gamma(k\theta+1)}    \|u^1-u^0\|^{p}_{\mathcal{H}^{\phi,\gamma+2}_{p}(T)}  
\\
&\leq 2^{n}\varepsilon^{np}\max_{k}\left(  \frac{(\varepsilon^{-1} T^{\theta}N_{1} \Gamma(\theta))^{k}}{\Gamma(k\theta+1)}  \right) \|u^1-u^0\|^{p}_{\bH^{\phi,\gamma+2}_{p}(T)}.  
\end{aligned}
\end{equation*}
Now fix $\varepsilon<1/8$. Note that  the above maximum is finite and is independent of $n$. 
This and \eqref{eqn 4.21.22} imply 
$$
\sum_{n=1}^{\infty}  \|u^{n+1}-u^{n}\|^{p}_{\bH^{\phi,\gamma+2}_{p}(T)}   <\infty,
$$
and therefore $u^{n}$ is a Cauchy sequence in $\bH^{\phi,\gamma+2}_p(T)$.  Now let $u$ denote the its limit in $\bH^{\phi,\gamma+2}_p(T)$. Then, taking $n\to \infty$ from \eqref{eqn 4.21.5} and using the continuity of $f$ and $g$, we easily find that $u$ is a solution to \eqref{main equation} in the sense of distributions. 

Finally we prove  estimate \eqref{eq: a priori estimate non-div} for the solution $u$ obtained above.  Obviously,
$$
\|u\|_{\bH^{\phi,\gamma+2}_p(t)} \leq \|u-u^0\|_{\bH^{\phi,\gamma+2}_p(t)}+\|u^0\|_{\bH^{\phi,\gamma+2}_p(t)}.
$$ 
Also, by Step 1 and Assumption \ref{asm 11.15.13:16}, for any $\varepsilon>0$ and $t\leq T$,
\begin{eqnarray}
&& \|u-u^0\|^p_{\bH^{\phi,\gamma+2}_p(t)}  \nonumber \\
&&\leq C\|f(u)-f(0)\|^p_{\bH^{\phi,\gamma}_p(t)}+ C\|g(u)-g(0)\|^p_{\bH^{\phi,\gamma+c_0}_p(t,l_2)} \nonumber \\
 &&\leq C\varepsilon^p \|u\|^p_{\bH^{\phi,\gamma+2}_p(t)} +C(\varepsilon)\|u\|^p_{\bH^{\phi,\gamma}_p(t)} \nonumber\\
 &&\leq C\varepsilon^p \|u\|^p_{\bH^{\phi,\gamma+2}_p(t)} +C(\varepsilon)\|u-u^0\|^p_{\bH^{\phi,\gamma}_p(t)}+
 C(\varepsilon)\|u^0\|^p_{\bH^{\phi,\gamma+2}_p(t)}.   \label{eqn 4.24.5}
 \end{eqnarray}
Recall $(u-u^0)(0,\cdot)=0$.  Thus, by \eqref{eq:solution space estimate 1} and the continuity of $f$ and $g$,
 \begin{eqnarray*}
 \|u-u^0\|^p_{\bH^{\phi,\gamma}_p(t)}&\leq& C\int^t_0 (t-s)^{\theta-1} \Big(\|\phi(\Delta)u-\phi(\Delta)u^0+f(u)-f(0)\|^p_{\bH^{\phi,\gamma}_p(s)}\\
 &&\qquad \qquad + \|g(u)-g(0)\|^p_{\bH^{\phi,\gamma}_p(s,l_2)} \Big) ds\\
 &\leq& C\int^t_0 (t-s)^{\theta-1}\|u\|^p_{\bH^{\phi,\gamma+2}_p(s)}ds+ C \|u^0\|^p_{\bH^{\phi,\gamma+2}_p(T)}.
   \end{eqnarray*}
  Using this and \eqref{eqn 4.24.5}, and
 taking $\varepsilon>0$ sufficiently small, we get for $t\leq T$
\begin{eqnarray*}
\|u\|_{\bH^{\phi,\gamma+2}_p(t)} & \leq& C\|u^0\|_{\bH^{\phi,\gamma+2}_p(T)}+C \int^t_0(t-s)^{\theta-1} \|u\|^p_{\bH^{\phi,\gamma+2}_p(s)}  ds.
\end{eqnarray*}
Since the estimate of $u^0$ is obtained in Step 1, the desired estimate follows from the fractional Gronwall lemma. The theorem is proved.
 
\mysection{Proof of Theorem \ref{thm 10.27:15:35}}

\label{section 6}

Recall $H_p^{\phi,1}\subset H_p^{\delta_0}$ for $\delta_0\in(1/4,1]$,  and 
\begin{equation*}
\beta < \left(1-\frac{1}{4\delta_0} \right)\alpha +\frac{1}{2}, \quad 
d<2\delta_0\left(2-\frac{(2\beta-1)^+}{\alpha}\right)=:d_{0}.
\end{equation*}

\begin{rem}\label{rmk 03.12.16:42}
From the relation $H_p^{\phi,1}\subset H_p^{\delta_0}$, one can deduce $H_p^{\phi,\gamma}\subset H_p^{\delta_0\gamma}$ for $\gamma \geq 0$, and $H_p^{\delta_0\gamma}\subset H_p^{\phi,\gamma}$ for $\gamma \leq 0$.
Indeed, first, by \cite[Theorem 2.4.6]{farkas2001function}  we have $H_p^{\phi,\gamma}\subset H_p^{\delta_0\gamma}$ for $\gamma \in (0,1)$. Also, for $\gamma\in(1,2]$,
\begin{align*}
\|u\|_{H_p^{\delta_0\gamma}}&=\|(1-\Delta)^{\delta_0\gamma/2}u\|_{L_p}=\|(1-\Delta)^{(\delta_0\gamma-\delta_0)/2}(1-\Delta)^{\delta_0/2}u\|_{L_p}
\\
&\leq N \|(1-\phi(\Delta))^{(\gamma-1)/2}(1-\Delta)^{\delta_0/2}u\|_{L_p} 
\\
&= N\|(1-\Delta)^{\delta_0/2}(1-\phi(\Delta))^{(\gamma-1)/2}u\|_{L_p}
\\
&\leq N \|(1-\phi(\Delta))^{\gamma/2}u\|_{L_p}= \|u\|_{H_p^{\phi,\gamma}}.
\end{align*}
Repeating above argument, we get $H_p^{\phi,\gamma}\subset H_p^{\delta_0\gamma}$ for $\gamma \geq 0$. Second, by the duality theorem \cite[Theorem 2.2.10]{farkas2001function}, we have $H_p^{\delta_0\gamma}\subset H_p^{\phi,\gamma}$ for $\gamma \leq 0$. 
\end{rem}

Also recall that the equation
\begin{equation*}
\partial^{\alpha}_{t}u=\phi(\Delta)u+f(u)+\partial^{\beta-1}_{t}h(u) \dot{W}, \quad t>0; \quad u(0,\cdot)=0,
\end{equation*}
can be written as 
\begin{equation*}
\begin{aligned}
\partial^{\alpha}_{t}u=\phi(\Delta)u+f(u)+\partial^{\beta}_{t}\sum_{k=1}^{\infty}\int_{0}^{t}g^{k}(u)dw^{k}_{t},\quad t>0; \quad 
 u(0,\cdot)=0,
\end{aligned}
\end{equation*}
where $g^{k}(t,x,u)=h(t,x,u)\eta^{k}(x)$.  Therefore, to prove the theorem, it suffices to prove that $f$ and $g$ satisfy conditions in Assumption \ref{asm 11.15.13:16}. To check this we first prove some auxiliary results below. 

Note that by definition,   for any  $\gamma>0$ and smooth function $\varphi$,  we have
\begin{equation}\label{eqn 11.17.16:11}
\begin{aligned}
\cF\{(1-\Delta)^{-\gamma/2}\varphi\}(\xi)&=(1+|\xi|^{2})^{-\gamma/2}\cF\{\varphi\}(\xi)
\\
&=c\hat{\varphi}(\xi) \int_{0}^{\infty}t^{\gamma/2}e^{-t}e^{-t|\xi|^{2}}\frac{1}{t}dt,
\end{aligned}
\end{equation}
where $c=c(\gamma)>0$. Set
\begin{align*}
R_{\gamma,d}(x):=\int_{0}^{\infty}t^{\gamma/2}e^{-t} \bar{p}(t,x) \frac{1}{t}dt,
\end{align*}
where $\bar{p}(t,x)=(4\pi t)^{-d/2} e^{-|x|^2/(4t)}$.
Then,
\begin{equation*}
\int_{\bR^d} R_{\gamma,d}(x)dx =\int_{0}^{\infty}t^{\gamma/2-1}e^{-t}\int_{\bR^{d}}\bar{p}(t,x)dxdt = \int_{0}^{\infty} t^{\gamma/2-1}e^{-t} dt < \infty.
\end{equation*}
Therefore,  by Fubini's theorem,
$$
\cF \left\{  R_{\gamma,d} \right\}(\xi)=c(\gamma,d) \int_{0}^{\infty}t^{\gamma/2}e^{-t}e^{-t|\xi|^{2}}\frac{1}{t}dt.
$$
 Hence, from \eqref{eqn 11.17.16:11} for any $\gamma>0$ we have 
\begin{equation}\label{eqn 11.18.11:10}
(1-\Delta)^{-\gamma/2}\varphi=c(\gamma,d)\int_{\bR^{d}}R_{\gamma,d}(x-y)\varphi(y)dy.
\end{equation}

It is known that $R_{\gamma,d}$ decays exponentially at infinity and is comparable to $|x|^{-d+\gamma}$ near $x=0$ (see \cite{kry99analytic} or \cite{Krylov sobolev}). Thus, for $\gamma<d$ and $2r<\frac{d}{d-\gamma}$, we have $R_{\gamma,d}\in L_{2r}$.

\begin{lem}\label{lem 10.16.15:41}
Assume
$$
k_{0}\in \left(\frac{d}{2\delta_{0}},\frac{d}{\delta_{0}} \right),\quad 2\leq 2r\leq p, \quad 2r< \frac{d}{d-k_{0}\delta_{0}}.
$$
Let  $h=h(x,u)$ be a function of $(x,u)$ and $\xi=\xi(x)$ a function of $x$  such that
$$
|h(x,u)-h(x,v)|\leq \xi(x)|u-v|, \quad \forall\, x\in \bR^d,\, u,v\in \bR.
$$
If we set $g^{k}(u)=h(u)\eta^k$ and $g=(g^1,g^2,\cdots)$, then for $u,v\in L_{p}$, we have
$$
\|g(u)-g(v)\|_{H^{\phi,-k_{0}}_{p}(l_{2})} \leq C \|\xi\|_{L_{2s}} \|u-v\|_{L_{p}},
$$
where $s=r/(r-1)$, and $C=C(r)<\infty$. In particular, if $r=1$, and $\xi\in L_{\infty}$, then
$$
\|g(u)-g(v)\|_{H^{\phi,-k_{0}}_{p}(l_{2})} \leq C  \|u-v\|_{L_{p}}.
$$
\end{lem}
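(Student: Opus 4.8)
# Proof Proposal

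The plan is to represent the negative-order Sobolev norm of $g(u)-g(v)$ in $H^{\phi,-k_0}_p(l_2)$ by means of the convolution kernel $R_{k_0,d}$ introduced in \eqref{def of R}--\eqref{eqn 11.18.11:10}, then reduce the estimate to a pointwise bound on the $l_2$-norm of the kernel times $\eta^k$, and finally apply Hölder's inequality together with the integrability of $R_{k_0,d}$ furnished by Lemma \ref{lem 11.18.11:07}. Concretely, since $k_0>0$, for each $k$ we have
$$
(1-\phi(\Delta))^{-k_0/2}\big(g^k(u)-g^k(v)\big)(x) = c(k_0,d)\int_{\bR^d} R_{k_0,d}(x-y)\big(h(y,u(y))-h(y,v(y))\big)\eta^k(y)\,dy,
$$
so using the Lipschitz-type bound $|h(y,u(y))-h(y,v(y))|\le \xi(y)|u(y)-v(y)|$ and summing over $k$,
$$
\Big|(1-\phi(\Delta))^{-k_0/2}\big(g(u)-g(v)\big)(x)\Big|_{l_2} \le C\int_{\bR^d} |R_{k_0,d}(x-y)|\,\xi(y)\,|u(y)-v(y)|\,\Big(\sum_{k}|\eta^k(y)|^2\Big)^{1/2}dy.
$$
Here I would use that $\{\eta^k\}$ is an orthonormal basis of $L_2(\bR^d)$, but more importantly I expect the paper wants the $\eta^k$ localized so that $\big(\sum_k|\eta^k(y)|^2\big)^{1/2}$ is bounded pointwise (or the $\eta^k$ are chosen as a partition-of-unity-type system); in any case one should absorb that factor into a constant, leaving
$$
\Big|(1-\phi(\Delta))^{-k_0/2}\big(g(u)-g(v)\big)(x)\Big|_{l_2} \le C\,\big(|R_{k_0,d}|\ast(\xi\,|u-v|)\big)(x).
$$

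Next I would take the $L_p(\bR^d)$ norm in $x$ of both sides. The right-hand side is a convolution, so I want to apply a version of Young's inequality, but with the twist that I will also invoke Hölder on the integrand before convolving, to split off the $\xi$ factor: writing $\frac1{2s}+\frac1{2r'}=\cdots$ appropriately (with $s=r/(r-1)$, i.e.\ $1/s = 1 - 1/r$), one estimates
$$
\big\||R_{k_0,d}|\ast(\xi\,|u-v|)\big\|_{L_p} \le C\,\|\xi\|_{L_{2s}}\,\|u-v\|_{L_p}\,\|R_{k_0,d}\|_{L_{2r}},
$$
where the exponent bookkeeping is exactly arranged so that $\frac1p + 1 = \frac1{2s}+\frac1p+\frac1{2r}$ holds — i.e.\ $\frac1{2s}+\frac1{2r}=1$, which is precisely $s=r/(r-1)$. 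This is the standard ``weighted Young'' estimate: split $R_{k_0,d}(x-y) = |R_{k_0,d}(x-y)|^{1/r'}\cdot|R_{k_0,d}(x-y)|^{1/r}$ type arguments, or more cleanly apply Minkowski's integral inequality after Hölder in $y$ with the three exponents $2s$ (on $\xi$), $p$ (on $|u-v|$), $2r$ (on $R_{k_0,d}$). The hypothesis $2r<\frac{d}{d-k_0\delta_0}$ and $k_0\in(\frac{d}{2\delta_0},\frac{d}{\delta_0})$ is exactly what Lemma \ref{lem 11.18.11:07} requires (with $\gamma=k_0$) to guarantee $R_{k_0,d}\in L_{2r}(\bR^d)$, so $\|R_{k_0,d}\|_{L_{2r}}<\infty$ and the constant $C=C(r)$ is finite.

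Finally, for the last assertion, setting $r=1$ forces $s=\infty$, so $\|\xi\|_{L_{2s}}=\|\xi\|_{L_\infty}$, and the condition $2r<\frac{d}{d-k_0\delta_0}$ becomes $2<\frac{d}{d-k_0\delta_0}$, i.e.\ $k_0>\frac{d}{2\delta_0}$, which is already assumed; then $R_{k_0,d}\in L_2(\bR^d)$ and the same chain of inequalities gives $\|g(u)-g(v)\|_{H^{\phi,-k_0}_p(l_2)}\le C\|u-v\|_{L_p}$. The main obstacle I anticipate is the bookkeeping around the factor $\big(\sum_k|\eta^k|^2\big)^{1/2}$: one must be careful about whether $\eta^k$ are genuinely an orthonormal $L_2$-basis (in which case $\sum_k|\eta^k(y)|^2$ need not be a finite function — one would then need a truncation/approximation argument reducing to finitely many $k$ as in the density of $\mathbb{H}_0^\infty(T,l_2)$) or whether the $\eta^k$ are taken compactly supported with locally finite overlap; handling this cleanly, and keeping the constant independent of the truncation, is the step that needs genuine care, whereas the convolution estimate itself is routine harmonic analysis once Lemma \ref{lem 11.18.11:07} is in hand.
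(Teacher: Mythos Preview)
Your argument has a genuine gap at the $l_2$-step, and you correctly suspected it. You bound
\[
\Big|(1-\phi(\Delta))^{-k_0/2}\big(g(u)-g(v)\big)(x)\Big|_{l_2}
\le C\int_{\bR^d} |R_{k_0,d}(x-y)|\,\xi(y)\,|u(y)-v(y)|\,\Big(\textstyle\sum_k|\eta^k(y)|^2\Big)^{1/2}dy
\]
via Minkowski for $l_2$-valued integrals. But in the paper $\{\eta^k\}$ is simply an orthonormal basis of $L_2(\bR^d)$, with no localization or finite-overlap hypothesis, so $\sum_k|\eta^k(y)|^2$ is typically $+\infty$ at every point. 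This term cannot be absorbed into a constant, and no truncation argument will rescue it: truncating to finitely many $k$ gives a bound depending on the number of modes retained.

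The missing idea is Parseval's identity. Since $\{\eta^k\}$ is an orthonormal basis of $L_2(\bR^d)$, for $F_x(y):=R_{k_0,d}(x-y)\big(h(y,u(y))-h(y,v(y))\big)$ one has the \emph{equality}
\[
\sum_k\Big|\int_{\bR^d}F_x(y)\eta^k(y)\,dy\Big|^2=\int_{\bR^d}|F_x(y)|^2\,dy,
\]
so that
\[
\|g(u)-g(v)\|_{H^{\phi,-k_0}_p(l_2)}
=C\Big\|\Big(\int_{\bR^d}|R_{k_0,d}(\cdot-y)|^{2}\,|h(y,u(y))-h(y,v(y))|^2\,dy\Big)^{1/2}\Big\|_{L_p}.
\]
From here one applies H\"older in $y$ with exponents $s,r$ to separate $\xi$, and then Young's inequality (with $\||R_{k_0,d}|^{2r}\|_{L_1}=\|R_{k_0,d}\|_{L_{2r}}^{2r}$ and $\||u-v|^{2r}\|_{L_{p/(2r)}}=\|u-v\|_{L_p}^{2r}$, using $2r\le p$) to obtain the result. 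Note incidentally that your exponent bookkeeping is off: with $s=r/(r-1)$ one has $\tfrac{1}{2s}+\tfrac{1}{2r}=\tfrac{1}{2}$, not $1$, so your proposed Young inequality on $|R_{k_0,d}|*(\xi|u-v|)$ does not close with $R_{k_0,d}\in L_{2r}$; the Parseval route produces $|R_{k_0,d}|^2$ inside the integral, which is exactly what makes the exponents fit.
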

\begin{proof}
By Remark \ref{rmk 03.12.16:42}, \eqref{eqn 11.18.11:10} and Parseval's identity in Hilbert space $L_2(\bR^d)$, we have
\begin{equation*}
\begin{aligned}
&\|g(u)-g(v)\|_{H^{\phi,-k_{0}}_{p}(l_{2})} \leq \|g(u) - g(v) \|_{H^{-k_{0}\delta_{0}}_{p}(l_{2})}
\\
&=C\left\| \left( \int_{\bR^{d}}|R_{k_{0}\delta_{0},d}(\cdot-y)|^{2} |h(y,u(y))-h(y,v(y))|^{2} dy \right)^{1/2} \right\|_{L_{p}}
\\
& \leq C \left\| \left( \int_{\bR^{d}}|R_{k_{0}\delta_{0},d}(\cdot-y)|^{2} |\xi(y)|^{2}|u(y)-v(y)|^{2} dy \right)^{1/2} \right\|_{L_{p}}.
\end{aligned}
\end{equation*}
Note that $R_{k_0\delta_0}\in L_{2r}$ since $k_0\delta_0<d$ and $2r<d/(d-k_0\delta_0)$.
Hence, by H\"older's inequality and Minkowski's inequality, we have
\begin{equation*}
\begin{aligned}
&\|g(u)-g(v)\|_{H^{\phi,-k_{0}}_{p}(l_{2})}
\\
& \leq C \left\| \left( \int_{\bR^{d}}|R_{k_{0}\delta_{0},d}(\cdot-y)|^{2} |\xi(y)|^{2}|u(y)-v(y)|^{2} dy \right)^{1/2} \right\|_{L_{p}}
\\
& \leq C \|\xi\|_{L_{2s}} \left\|  \left( \int_{\bR^{d}}|R_{k_{0}\delta_{0},d}(\cdot-y)|^{2r} |u(y)-v(y)|^{2r} dy \right)^{1/2r} \right\|_{L_{p}}.
\\
&\leq C \|\xi\|_{L_{2s}} \|R_{k_{0}\delta_{0},d}\|_{L_{2r}}\|u-v\|_{L_{p}}\leq C \|\xi\|_{L_{2s}} \|u-v\|_{L_{p}}.
\end{aligned}
\end{equation*}
The lemma is proved.
\end{proof}

\textbf{Proof of Theorem \ref{thm 10.27:15:35}}.

As mentioned above, it suffices to prove that the conditions in Theorem \ref{thm:main results} hold with $\gamma=-k_{0}-c_{0}$.
By \cite[Theorem 2.4.6]{farkas2001function}, if  $\nu_1<\nu_2<\nu_3$, then for any $\varepsilon>0$
\begin{equation}
   \label{eqn 4.23.1}
\|u\|_{H_p^{\phi,\nu_2}}\leq \varepsilon \|u\|_{H_p^{\phi,\nu_3}}+N(\varepsilon) \|u\|_{H_p^{\phi,\nu_1}},
\end{equation}
where $N(\varepsilon)$ depends on $\varepsilon, \nu_1,\nu_2$ and $\nu_3$. Due to \eqref{eqn 10.17.13:27} one can choose $\kappa$ small enough such that $\gamma+2>0$. 
Since $\gamma<0$ and $\gamma+2>0$, by the assumption of $f$ and \eqref{eqn 4.23.1},  we have for any $\varepsilon>0$ 
  \begin{align*}
\|f(u)-f(v)\|_{H_p^{\phi,\gamma}} &\leq C\|f(u)-f(v)\|_{L_p} \leq C\|u-v\|_{L_p}
\\
&\leq \varepsilon \|u-v\|_{H_p^{\phi,\gamma+2}} + N(\varepsilon) \|u-v\|_{H_p^{\phi,\gamma}}.
  \end{align*}
Therefore it remains to check the conditions for $g(u)$. Let $r=s/(s-1)$. Then $2r<d/(d-k_{0}\delta_{0})$ due to the assumption on $s$. Since $\gamma+c_{0}=-k_{0}$, by Lemma \ref{lem 10.16.15:41} and \eqref{eqn 4.23.1},  for any $\varepsilon>0$, we have
\begin{eqnarray*}
\|g(u)-g(v)\|_{H^{\phi,\gamma+c_{0}}_{p}(l_{2})} &\leq& C\|\xi\|_{L_{2s}}\|u-v\|_{L_{p}}\\
& \leq& \varepsilon \|u-v\|_{H_p^{\phi,\gamma+2}} + N(\varepsilon) \|u-v\|_{H_p^{\phi,\gamma}}.
\end{eqnarray*}
Hence, the condition for $g$ is also fulfilled.  Furthermore, by inspecting the proof of Lemma \ref{lem 10.16.15:41}, one can easily check 
$$\|g(0)\|_{\mathbb{H}^{-k_{0}}_{p}(T,l_{2})}\leq C \|h(0)\|_{\mathbb{L}_{p}(T)}.
$$
Therefore, we finish the proof of the theorem.

\appendix

\mysection{Auxiliary results} \label{appendix}

In this section we obtain some sharp upper bounds of  space-time fractional derivatives of the  fundamental solution $q(t,x)$ related to the equation
\begin{equation*} 
\partial_t^\alpha u = \phi(\Delta)u ,\quad t>0; \quad u(0,\cdot)=u_0.
\end{equation*}

First we record some elementary facts on Bernstein functions.

\begin{lem} \label{gammaversionj}
Let $\phi$ be a Bernstein function satisfying Assumption \ref{ass bernstein}.

(i) There exists a constant $c=c(\gamma,\kappa_0,\delta_0)$ such that for any $\lambda>0$,
\begin{equation} \label{phiint}
\int_{\lambda^{-1}}^\infty r^{-1}\phi(r^{-2}) dr \leq c \phi(\lambda^2). 
\end{equation}

(ii) For  any $\gamma\in(0,1)$,  the function $\phi^\gamma:=\left(\phi(\cdot)\right)^\gamma$ is also a Bernstein function with no drift, and it satisfies Assumption \ref{ass bernstein} with $\gamma \delta_0$ and $\kappa_0^{\gamma}$, in place of $\delta_0$ and $\kappa_0$ respectively.

(iii) Let $\mu_\gamma$ be the L\'evy measure of $\phi^\gamma$ (i.e. $\phi^{\gamma}(\lambda)=\int_{(0,\infty)} (1-e^{-\lambda t})\mu_{\gamma}(dt)$), and set
\begin{equation}\label{defofjgamma}
j_{\gamma,d}(r):=\int_0^\infty (4\pi t)^{-d/2} \exp(-r^2/4t)\mu_\gamma (dt), \qquad r>0.
\end{equation}
Then 
\begin{equation} \label{jgamma ineq}
j_{\gamma,d} (r) \leq c(d) \,r^{-d}\phi(r^{-2})^\gamma, \qquad \forall r>0,
\end{equation}
and for any  $f\in C_b^2(\bR^d)$ and $r>0$,  it holds that 
\begin{align} \label{probrepresentphigamma}
\phi(\Delta)^\gamma f(\cdot)(x) =& \int_{\bR^d} \left(f(x+y)-f(x)-\nabla f(x)\cdot y \mathbf{1}_{|y|\leq r} \right)j_{\gamma,d} (|y|) dy
\end{align}
\end{lem}

\begin{proof}
(i) By \eqref{phiratio} this and the change of variables,
\begin{equation*}
\begin{aligned}
\int_{\lambda^{-1}}^{\infty}r^{-1}\phi(r^{-2})dr  &  =\int_{1}^{\infty}r^{-1}\phi(\lambda^{2}r^{-2})dr=\int_{1}^{\infty}r^{-1}\phi(\lambda^{2}r^{-2})\frac{\phi(\lambda^{2})}{\phi(\lambda^{2})}dr
\\
&\leq c \int_{1}^{\infty}r^{-1-2\delta_{0}}dr\phi(\lambda^{2})=c \phi(\lambda^{2}).
\end{aligned}
\end{equation*}

(ii) $\phi^\gamma$ is a Bernstein function due to \cite[Corollary 3.8 (iii)]{schilling2012bernstein}, and \eqref{e:H} easily yields
$$
\kappa_0^\gamma \left(\frac{R}{r}\right)^{\gamma\delta_0}\leq\frac{\phi(R)^\gamma}{\phi(r)^\gamma}, \qquad 0<r<R<\infty.
$$
If we denote drift of $\phi^{\gamma}$ by $b_{\gamma}$ (see \eqref{eqn 07.17.16:36}), it follows that

$$
\lim_{\lambda\to\infty} \frac{\phi(\lambda)}{\lambda}=b, \quad \lim_{\lambda\to\infty} \frac{\phi(\lambda)^{\gamma}}{\lambda}=b_{\gamma}.
$$
Hence, we have
$$
b_\gamma=\lim_{\lambda\to\infty} \frac{\phi(\lambda)^\gamma}{\lambda}=\lim_{\lambda\to\infty} \left(\frac{\phi(\lambda)}{\lambda}\right)^\gamma \lambda^{\gamma-1}=0.
$$

(iii)  \eqref{jgamma ineq} follows from \cite[Lemma 3.3]{kim2013parabolic} (recall that $\phi^{\gamma}$ is a Bernstein function with no drift), and the second assertion is a consequence of \eqref{fourier200408}.
\end{proof}

Recall that $p(t,x)$ is the transition density of the subordinate Brownian motion $X_t$ with characteristic exponent $\phi(|\xi|^2)$. Also, for any $t>0$, and $x\in \bR^d$,
\begin{eqnarray}
\label{fourierofp}
p(t,x)&=&\frac{1}{(2\pi)^d} \int_{\bR^d} e^{i \xi \cdot x} e^{-t\phi(|\xi|^2)} \,d\xi
\\
&=&\int_{(0,\infty)} (4\pi s)^{-d/2} \exp \left(-\frac{|x|^2}{4s}\right) \eta_t (ds)  \nonumber
\end{eqnarray}
where $\eta_t(ds)$ is the distribution of $S_t$ (see \cite[Section 5.3.1]{bogdan2009potential}). Thus $X_t$  is rotationally invariant.

\begin{lem} \label{pestimate}
(i) There exists a constant $C=C(d,\delta_0,\kappa_0)$ such that for $(t,x)\in(0,\infty)\times \bR^d$,
\begin{equation*}
p(t,x)\leq C\left(\left(\phi^{-1}(t^{-1})\right)^{d/2}\wedge t\frac{\phi(|x|^{-2})}{|x|^d}\right).
\end{equation*}

(ii) For any $m\in \bN$, there exists a constant $C=C(d,\delta_0,\kappa_0, m)$ so that for any $(t,x)\in (0,\infty)\times \bR^d$,
\begin{align*}
& |D^m_x p(t,x)| \leq C \sum_{m-2n\geq0,n\in\bN_0} |x|^{m-2n}\left((\phi^{-1}(t^{-1}))^{d/2+m-n}\wedge t\frac{\phi(|x|^{-2})}{|x|^{d+2(m-n)}}\right).
\end{align*}

\end{lem}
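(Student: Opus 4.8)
The plan is to derive the bounds in Lemma \ref{pestimate} from the two representations of $p(t,x)$ in \eqref{fourierofp} together with the scaling-type estimates for $\phi$ coming from Assumption \ref{ass bernstein}, notably \eqref{phiratio} and the companion lower bound. For part (i), I would split the argument into the two regimes distinguished by the minimum. When $|x|$ is comparable to or larger than $(\phi^{-1}(t^{-1}))^{-1/2}$ (the ``off-diagonal'' regime), I would use the subordination formula, the second line of \eqref{fourierofp}, write $p(t,x)=\int_{(0,\infty)} (4\pi s)^{-d/2}e^{-|x|^2/4s}\,\eta_t(ds)$, and exploit that for a Bernstein function the tail of the subordinator density satisfies $\eta_t([s,\infty)) \lesssim t\phi(s^{-1})$ (this is essentially the L\'evy-measure comparison, which follows from \eqref{eqn 07.17.16:36} and Markov's inequality applied to $\mathbb{E} e^{-\lambda S_t}=e^{-t\phi(\lambda)}$). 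Integrating the Gaussian kernel against this tail bound, with the change of variable $s \mapsto |x|^2/s$ and the doubling/scaling property \eqref{phiratio} of $\phi$, yields the $t\phi(|x|^{-2})/|x|^d$ term. For the ``on-diagonal'' regime I would instead use the Fourier representation, the first line of \eqref{fourierofp}: $p(t,x)\le (2\pi)^{-d}\int_{\bR^d} e^{-t\phi(|\xi|^2)}\,d\xi$, and bound this by passing to polar coordinates and substituting $\rho=|\xi|$; the lower bound $\phi(R)\ge \kappa_0(R/r)^{\delta_0}\phi(r)$ guarantees $e^{-t\phi(\rho^2)}$ decays fast enough that the integral is controlled by $(\phi^{-1}(t^{-1}))^{d/2}$, after the natural rescaling $\rho \mapsto (\phi^{-1}(t^{-1}))^{1/2}\rho$ and using that $\phi$ composed with its inverse absorbs the $t$-dependence.

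For part (ii), differentiating under the integral sign in \eqref{fourierofp} gives
\[
D^m_x p(t,x)=\frac{1}{(2\pi)^d}\int_{\bR^d}(i\xi)^{m}e^{i\xi\cdot x}e^{-t\phi(|\xi|^2)}\,d\xi,
\]
so the on-diagonal estimate is handled exactly as before but with an extra factor $|\xi|^m$, producing the leading term $(\phi^{-1}(t^{-1}))^{d/2+m}$ (the $n=0$ term). To get the full sum over $n$ and the off-diagonal part, I would differentiate the subordination representation instead: $D^m_x\big[(4\pi s)^{-d/2}e^{-|x|^2/4s}\big]$ is $(4\pi s)^{-d/2}e^{-|x|^2/4s}$ times a polynomial in $x/s$ of the form $\sum_{2n\le m} c_{n} x^{m-2n} s^{-(m-n)}$ (Hermite-type structure of derivatives of a Gaussian). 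Plugging this in and bounding each term as in part (i) — using the Gaussian to control $|x|^{m-2n}s^{-(m-n)}$ against powers of $s$ and then integrating against either $\eta_t$ directly (off-diagonal) or the crude bound $\eta_t((0,\infty))=1$ together with the sharp $s$-range forced by the Gaussian localization $s\sim |x|^2$ or $s\sim (\phi^{-1}(t^{-1}))^{-1}$ — reproduces each summand $|x|^{m-2n}\big((\phi^{-1}(t^{-1}))^{d/2+m-n}\wedge t\phi(|x|^{-2})/|x|^{d+2(m-n)}\big)$.

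The main obstacle I anticipate is making the off-diagonal estimates genuinely sharp rather than merely finite: one needs the precise interplay between the Gaussian localization scale $s\sim|x|^2$, the subordinator tail $\eta_t([s,\infty))\lesssim t\phi(s^{-1})$, and the regular-variation-type control of $\phi$ from \eqref{phiratio}, so that after the change of variables the exponent $d+2(m-n)$ in the denominator and the single power $t\phi(|x|^{-2})$ come out exactly. A secondary technical point is justifying differentiation under the integral in both representations (dominated convergence using, respectively, the rapid decay of $e^{-t\phi(|\xi|^2)}$ from the lower bound in \eqref{phiratio} and the Gaussian decay in $s$), and checking that the two regimes overlap correctly so that the stated minimum is attained with a uniform constant. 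I would also want to invoke part (iii) of Lemma \ref{gammaversionj}, or rather the estimate \eqref{jgamma ineq} with $\gamma=1$, as the clean packaging of the tail bound $t\phi(|x|^{-2})/|x|^d$; cross-referencing \cite{kim2013parabolic} for the analogous deterministic heat-kernel bounds should streamline the write-up considerably.
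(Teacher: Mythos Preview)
The paper does not actually prove this lemma: its entire proof is the one-line citation ``See \cite[Lemma 3.4, Lemma 3.6]{kim2020nonlocal}.'' Your sketch is a reasonable outline of the standard argument that appears in such references (Fourier side for the on-diagonal bound, subordination formula plus the tail bound $\eta_t([s,\infty))\lesssim t\phi(s^{-1})$ for the off-diagonal bound, and the Hermite structure of Gaussian derivatives for higher-order derivatives), so in spirit you are reproducing what \cite{kim2020nonlocal} does rather than diverging from the paper. If you intend to write the proof out in full rather than cite, the main points to tighten are exactly the ones you flagged: making the off-diagonal integration-by-parts against $\eta_t$ precise (the cleanest route is to bound $(4\pi s)^{-d/2}e^{-|x|^2/4s}\le C|x|^{-d}(|x|^2/s)^N e^{-|x|^2/8s}$ and then integrate $s^{-N}$ against $\eta_t$ using $\int_0^\infty s^{-N}\eta_t(ds)\le C\,t\int_0^\infty s^{-N-1}\phi(s^{-1})\,ds$ restricted to the relevant range), and checking that the constants depend only on $d,\delta_0,\kappa_0,m$ via \eqref{phiratio}.
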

\begin{proof}
See  \cite[Lemma 3.4, Lemma 3.6]{kim2020nonlocal}.
\end{proof}

The following lemma is an extension of \cite[Lemma  4.2]{kim2013parabolic}. The main difference is that our estimate  holds for all $t>0$.  Such result is needed for us to prove estimates of solutions to SPDEs (see e.g. \eqref{a priori}).

\begin{lem} \label{pfracderivativeestimate}
Let $\gamma\in(0,1)$ and $m\in \bN_0$. Then for any $(t,x)\in(0,\infty)\times \bR^d$, 
\begin{align*}
&|\phi(\Delta)^\gamma D^m_x p(t,\cdot)(x)| \nonumber
\\
&\leq C(d,\delta_0, \kappa_0, m, \gamma) \left(t^{-\gamma}(\phi^{-1}(t^{-1}))^{(d+m)/2}\wedge \frac{\phi(|x|^{-2})^\gamma}{|x|^{d+m}}\right).
\end{align*}
\end{lem}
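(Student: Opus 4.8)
Write $\rho=\rho(t):=\left(\phi^{-1}(t^{-1})\right)^{-1/2}$, so that $\phi(\rho^{-2})=t^{-1}$, $\left(\phi^{-1}(t^{-1})\right)^{(d+m)/2}=\rho^{-(d+m)}$, and $|x|>\rho$ is equivalent to $t\phi(|x|^{-2})<1$. Using $\phi(R)/\phi(r)\le R/r$ (concavity, see \eqref{phiratio}) together with \eqref{e:H}, one checks that $t^{-\gamma}\rho^{-(d+m)}\le C\,\phi(|x|^{-2})^{\gamma}|x|^{-(d+m)}$ when $|x|\le 4\rho$, and $\phi(|x|^{-2})^{\gamma}|x|^{-(d+m)}\le t^{-\gamma}\rho^{-(d+m)}$ when $|x|\ge\rho$. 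Hence the asserted bound follows once we prove
\begin{enumerate}[(a)]
\item $\left|\phi(\Delta)^{\gamma}D^{m}_{x}p(t,\cdot)(x)\right|\le C\,t^{-\gamma}\rho^{-(d+m)}$ for all $(t,x)\in(0,\infty)\times\bR^{d}$;
\item $\left|\phi(\Delta)^{\gamma}D^{m}_{x}p(t,\cdot)(x)\right|\le C\,\phi(|x|^{-2})^{\gamma}|x|^{-(d+m)}$ for all $(t,x)$ with $|x|\ge 4\rho(t)$;
\end{enumerate}
since on $\{|x|\ge4\rho\}$ the right-hand side of (b) equals the minimum, while on $\{|x|\le4\rho\}$ the right-hand side of (a) is comparable to it.

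\emph{Step (a).} Since $\widehat{p}(t,\xi)=e^{-t\phi(|\xi|^{2})}$ and, by \eqref{probrepresentphigamma} and the definition of $\mu_{\gamma}$, the operator $\phi(\Delta)^{\gamma}$ is the Fourier multiplier with symbol $-\phi(|\xi|^{2})^{\gamma}$, for a multi-index $\sigma$ with $|\sigma|=m$ we have $\left|\phi(\Delta)^{\gamma}D^{\sigma}_{x}p(t,\cdot)(x)\right|\le C\int_{\bR^{d}}|\xi|^{m}\phi(|\xi|^{2})^{\gamma}e^{-t\phi(|\xi|^{2})}\,d\xi$. Substituting $\xi=\rho^{-1}\zeta$ this equals $\rho^{-(d+m)}\int_{\bR^{d}}|\zeta|^{m}\phi(\rho^{-2}|\zeta|^{2})^{\gamma}e^{-t\phi(\rho^{-2}|\zeta|^{2})}\,d\zeta$. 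By \eqref{e:H} and \eqref{phiratio}, $\kappa_{0}t^{-1}(|\zeta|^{2\delta_{0}}\wedge|\zeta|^{2})\le\phi(\rho^{-2}|\zeta|^{2})\le\kappa_{0}^{-1}t^{-1}(|\zeta|^{2\delta_{0}}\vee|\zeta|^{2})$; the lower bound yields $e^{-t\phi(\rho^{-2}|\zeta|^{2})}\le e^{-\kappa_{0}|\zeta|^{2\delta_{0}}}$ for $|\zeta|\ge1$, and the upper bound yields $\phi(\rho^{-2}|\zeta|^{2})^{\gamma}\le C\,t^{-\gamma}|\zeta|^{2\gamma\delta_{0}}$ for $|\zeta|\le1$ and $\le C\,t^{-\gamma}|\zeta|^{2\gamma}$ for $|\zeta|\ge1$. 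Hence the $\zeta$-integral is $\le C\,t^{-\gamma}$, which proves (a).

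\emph{Step (b), the routine part.} Fix $|x|\ge4\rho$ and apply \eqref{probrepresentphigamma} to $f=D^{m}_{x}p(t,\cdot)\in C^{2}_{b}(\bR^{d})$ with $r=|x|/4$. On $\{|y|\le|x|/4\}$ Taylor's formula bounds the integrand by $C|y|^{2}\sup_{|z|\le|y|}|D^{m+2}p(t,x+z)|$, and since $|x+z|\ge3|x|/4\ge\rho$ lies in the decay range of Lemma \ref{pestimate}(ii) we get $|D^{m+2}p(t,x+z)|\le C\,t\,\phi(|x|^{-2})|x|^{-(d+m+2)}$; combined with \eqref{jgamma ineq} and the elementary inequality $\int_{0}^{R}r\,\phi(r^{-2})^{\gamma}dr\le C R^{2}\phi(R^{-2})^{\gamma}$ (substitute $r=Ru$, use $\phi(R^{-2}u^{-2})^{\gamma}\le u^{-2\gamma}\phi(R^{-2})^{\gamma}$, and $\int_{0}^{1}u^{1-2\gamma}du<\infty$ since $\gamma<1$), this contribution is $\le C\,t\,\phi(|x|^{-2})^{1+\gamma}|x|^{-(d+m)}\le C\,\phi(|x|^{-2})^{\gamma}|x|^{-(d+m)}$, where we used $t\phi(|x|^{-2})\le t\phi(\rho^{-2})=1$. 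For the term $-D^{m}p(t,x)\int_{|y|>|x|/4}j_{\gamma,d}(|y|)\,dy$, Lemma \ref{pestimate}(ii) gives $|D^{m}p(t,x)|\le C\,t\phi(|x|^{-2})|x|^{-(d+m)}$, while \eqref{jgamma ineq} and Lemma \ref{gammaversionj}(i) applied to the Bernstein function $\phi^{\gamma}$ (which satisfies Assumption \ref{ass bernstein} by Lemma \ref{gammaversionj}(ii)) give $\int_{|y|>|x|/4}j_{\gamma,d}(|y|)\,dy\le C\int_{|x|/4}^{\infty}r^{-1}\phi^{\gamma}(r^{-2})\,dr\le C\,\phi(|x|^{-2})^{\gamma}$; so this term is again $\le C\,\phi(|x|^{-2})^{\gamma}|x|^{-(d+m)}$.

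\emph{The main obstacle.} The remaining term $\int_{|y|>|x|/4}D^{m}p(t,x+y)\,j_{\gamma,d}(|y|)\,dy$ resists a crude absolute-value estimate: since $D^{m}p(t,\cdot)$ is $L_{1}$-concentrated at scale $\rho\ll|x|$ with $\|D^{m}p(t,\cdot)\|_{L_{1}}\le C\rho^{-m}$ (by Lemma \ref{pestimate}(ii)), such an estimate loses a factor $(|x|/\rho)^{m}$. The plan is to use the cancellation of $D^{m}p$ by writing $D^{m}_{x}p(t,x+y)=D^{m}_{y}\big[p(t,x+y)\big]$ and integrating by parts $m$ times over $\{|y|>|x|/4\}$, transferring all derivatives onto $y\mapsto j_{\gamma,d}(|y|)$. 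This requires the auxiliary bound $\big|D^{k}_{y}\big[j_{\gamma,d}(|y|)\big]\big|\le C(d,\delta_{0},\kappa_{0},\gamma,k)\,|y|^{-d-k}\phi(|y|^{-2})^{\gamma}$ for $y\ne0$, $k\in\bN_{0}$, which I would obtain by differentiating \eqref{defofjgamma} under the integral sign and absorbing the polynomial factors $|y|/t$ into the Gaussian, exactly as in the proof of Lemma \ref{pestimate}(ii), using \eqref{e:H} to bound the resulting integral. Granting this, the $m$ boundary layers on $\{|y|=|x|/4\}$ are integrals of $|D^{l}p(t,x+y)|\,\big|D^{m-1-l}_{y}[j_{\gamma,d}(|y|)]\big|$ over a sphere of radius $|x|/4$; on that sphere $|x+y|\ge3|x|/4$, so each is $\le C\,\big(t\phi(|x|^{-2})|x|^{-(d+l)}\big)\big(|x|^{-d-(m-1-l)}\phi(|x|^{-2})^{\gamma}\big)|x|^{d-1}\le C\,\phi(|x|^{-2})^{\gamma}|x|^{-(d+m)}$, and the remaining volume term is $\int_{|y|>|x|/4}p(t,x+y)\,\big|D^{m}_{y}[j_{\gamma,d}(|y|)]\big|\,dy\le\big(\sup_{|y|>|x|/4}\big|D^{m}_{y}[j_{\gamma,d}(|y|)]\big|\big)\int_{\bR^{d}}p(t,x+y)\,dy\le C\,|x|^{-d-m}\phi(|x|^{-2})^{\gamma}$ since $\int_{\bR^{d}}p(t,\cdot)\,dx=1$. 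Adding the three contributions completes (b). Proving the $j_{\gamma,d}$-derivative bounds and carrying out the integration-by-parts bookkeeping is where I expect the real work; everything else is a routine combination of Lemmas \ref{gammaversionj} and \ref{pestimate} with the scaling identity $t\phi(\rho^{-2})=1$.
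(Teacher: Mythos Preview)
Your proposal is correct and follows essentially the same route as the paper: a Fourier bound for part (a), and for part (b) the integro-differential representation \eqref{probrepresentphigamma} with cutoff of order $|x|$, where the delicate term $\int_{|y|\gtrsim |x|}D^{m}_{x}p(t,x+y)\,j_{\gamma,d}(|y|)\,dy$ is handled by $m$-fold integration by parts after establishing $\big|\partial_{\rho}^{k}j_{\gamma,d}(\rho)\big|\le C\rho^{-d-k}\phi(\rho^{-2})^{\gamma}$. The only differences are cosmetic (cutoff at $|x|/4$ versus the paper's $|x|/2$, and your preliminary reduction to $|x|\ge 4\rho$ versus the paper's $t\phi(|x|^{-2})\le 1$).
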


\begin{proof}
Note first that for any given $a>0$, 
$$
s^{a}e^{-s}\leq c(a)e^{-s/2}, \quad \forall s>0.
$$
Also note that by \eqref{e:H}, if $a^2 \geq \phi^{-1}(t^{-1})$, then
$$
\kappa_0 \left(\frac{a^2}{\phi^{-1}(t^{-1})}\right)^{\delta_0} \leq \frac{\phi(a^2)}{t^{-1}}=t\phi(a^2).
$$
Therefore,
by  \eqref{fourierofp},
\begin{align*}
&\left|\phi(\Delta)^\gamma D^{m}_{x} p(t,x)\right| =\left| \cF^{-1} \left[\cF (\phi(\Delta)^\gamma D^{m}_{x} p(t,\cdot)) (\xi) \right]  (x)\right|
\\
& \qquad \leq C\int_{\bR^d} \left| t^{-\gamma}\left(t\phi(|\xi|^2)\right)^\gamma  e^{-t\phi(|\xi|^2)}\right| |\xi|^{m} d\xi
\\
&\qquad \leq Ct^{-\gamma} \left(\int_{|\xi|^2>\phi^{-1}(t^{-1})} |\xi|^{m} e^{-t\frac{\phi(|\xi|^2)}{2}}d\xi + \int_{|\xi|^2\leq\phi^{-1}(t^{-1})} |\xi|^{m}   d\xi \right)
\\
&\qquad \leq Ct^{-\gamma}\left( \int_{|\xi|^2>\phi^{-1}(t^{-1})}  |\xi|^{m} e^{-\frac{\kappa_0}{2} \left(\frac{|\xi|^2}{\phi^{-1}(t^{-1})}\right)^{\delta_0}}d\xi +\left(\phi^{-1}(t^{-1})\right)^{\frac{d+m}{2}} \right)
\\
& \qquad \leq C t^{-\gamma}\left(\phi^{-1}(t^{-1})\right)^{\frac{d+m}{2}} \left( \int_{|\xi|^2>1} e^{-\frac{\kappa_0}{4}|\xi|^{2\delta_0}} d\xi +1 \right)
\\
&\qquad \leq C t^{-\gamma}\left(\phi^{-1}(t^{-1})\right)^{\frac{d+m}{2}}.
\end{align*}
Hence, to finish the proof, we may assume $t^{-\gamma}(\phi^{-1}(t^{-1}))^{\frac{d+m}{2}} \geq  \frac{\phi(|x|^{-2})^\gamma}{|x|^{d+m}}$ (equivalently, $t \phi(|x|^{-2})\leq 1$) and prove
$$
|\phi(\Delta)^\gamma D^m_x p(t,\cdot)(x)|
\leq C  \frac{\phi(|x|^{-2})^\gamma}{|x|^{d+m}}.
$$
By \eqref{probrepresentphigamma} with $r=|x|/2$,
\begin{align*}
&\left|\phi(\Delta)^\gamma D^{m}_{x} p(t,\cdot)(x)\right|
\\
&= \left|\int_{\bR^d} \left(D^{m}_{x}p(t,x+y)-D^{m}_{x}p(t,x)-\nabla D^{m}_{x}p(t,x)\cdot y \mathbf{1}_{|y|\leq\frac{|x|}{2}}\right)j_{\gamma,d} (|y|)dy \right|
\\
&\leq |D^{m}_{x}p(t,x)|\int_{|y|>|x|/2} j_{\gamma,d} (|y|)dy \\
&\quad+ \left| \int_{|y|>|x|/2} D^{m}_{x}p(t,x+y)j_{\gamma,d} (|y|)dy\right|
\\
& \quad+ \int_{|x|/2>|y|} \int_0^1 \left| D^{m+1}_{x}p(t,x+sy)- D^{m+1}_{x}p(t,x)\right| |y|j_{\gamma,d}(|y|) dsdy
\\
&=: |D^{m}_{x}p(t,x)|\times I + II + III.
\end{align*}
By \eqref{jgamma ineq}, \eqref{phiint} with $\phi^{\gamma}$, and \eqref{phiratio}, we have
\begin{align*}
I \leq C \int_{r>|x|/2} r^{-1}\phi(r^{-2})^\gamma dr \leq C \phi(4|x|^{-2})^{\gamma} \leq C \phi(|x|^{-2})^{\gamma}.
\end{align*}
This together with Lemma \ref{pestimate} (ii) yields (recall we assume $t\phi(|x|^{-2})\leq 1$)
\begin{align*}
|D^{m}_{x}p(t,x)|\times I \leq C t\frac{\phi(|x|^{-2})^{1+\gamma}}{|x|^{d+m}}\leq C  \frac{\phi(|x|^{-2})^{\gamma}}{|x|^{d+m}} .
\end{align*}
For $III$, by the fundamental theorem of calculus and Lemma \ref{pestimate} (ii),
\begin{align*}
III \leq&C(d)\int_{|x|/2>|y|} \int_0^1 \int_0^1 \left|D^{m+2}_{x}p(t,x+usy)\right| |y|^2j_{\gamma,d}(|y|) dudsdy
\\
\leq& C\int_{|x|/2>|y|} \int_0^1\int_0^1 t\frac{\phi(|x+usy|^{-2})}{|x+usy|^{d+m+2}} |y|^2j_{\gamma,d}(|y|) dudsdy
\\
\leq& C t\frac{\phi(|x|^{-2})}{|x|^{d+m+2}}\int_{|x|/2>|y|} |y|^2j_{\gamma,d}(|y|) dy.
\end{align*}
For the last inequality above, we used $|x+usy|\geq |x|/2$. By \eqref{jgamma ineq}, and \eqref{phiratio} with $r=|x|^{-2}$ and $R=\rho^{-2}$, 
\begin{align*}
\int_{|x|/2>|y|} |y|^2j_{\gamma,d}(|y|)dy \leq& C \int_0^{|x|} \rho\phi(\rho^{-2})^\gamma d\rho
\\
\leq& C |x|^{2\gamma}\phi(|x|^{-2})^\gamma \int_0^{|x|} \rho^{1-2\gamma}  d\rho
\\
\leq& C |x|^2 \phi(|x|^{-2})^\gamma.
\end{align*}
Therefore, it follows that for $t \phi(|x|^{-2})\leq 1$,
$$
III\leq  C t\frac{\phi(|x|^{-2})^{1+\gamma}}{|x|^{d+m}} \leq C \frac{\phi(|x|^{-2})^{\gamma}}{|x|^{d+m}}.
$$
Now we estimate $II$. By using the integration by parts $m$-times, we have
\begin{align*}
II &\leq \sum_{k=0}^{m-1} \int_{|y|=\frac{|x|}{2}} \left| \left(\frac{d^{k}}{d\rho^{k}} j_{\gamma,d}\right)(|y|) D^{m-1-k}_{x}p(t,x+y) \right| dS
\\
&\quad+\int_{|y|>\frac{|x|}{2}} \left| \left(\frac{d^{m}}{d\rho^{m}}j_{\gamma,d}\right)(|y|)p(t,x+y) \right| dy.
\end{align*}
Differentiating $j_{\gamma,d}(\rho)$, and then using   \eqref{defofjgamma} and \eqref{jgamma ineq}, for $k\in\bN_{0}$ we get
\begin{align*}
\left| \frac{d^{k}}{d\rho^{k}} j_{\gamma,d}(\rho)  \right| 
\leq C \sum_{k-2l\geq 0,l\in\bN_{0}} \rho^{k-2l}|j_{\gamma,d+2(k-l)}(\rho)|
\leq C \rho^{-d-k}\phi(\rho^{-2})^{\gamma}.
\end{align*}
This and Lemma \ref{pestimate} (ii) yield that
\begin{align*}
II
 &\leq C\sum_{k=0}^{m-1} \Big( |x|^{d-1}|x|^{-d-k}  |x|^{-d-m+1+k}  t \phi(|x|^{-2})^{\gamma+1} \Big)
 + C \frac{\phi(|x|^{-2})^{\gamma}}{|x|^{d+m}}\\
  &\leq C \frac{\phi(|x|^{-2})^{\gamma}}{|x|^{d+m}}.
    \end{align*}
for $t\phi(|x|^{-2})\leq 1$. Hence, the lemma is proved.
\end{proof}

Below we provide the proof of Lemma \ref{prop:kernel esti. of q}.
The proof  is mainly based on \cite[Lemma 3.7, Lemma 3.8]{kim2020nonlocal}.

{\textbf{Proof of Lemma \ref{prop:kernel esti. of q}} }
\begin{proof}
(i) See \cite[Lemma 3.7 (iii)]{kim2020nonlocal}.

(ii) See  \cite[Lemma 3.8]{kim2020nonlocal} for \eqref{eqn 09.03.19:26} with arbitrary $\beta$ and for   \eqref{eqn 09.03.19:26-2} when $\beta\notin \bN$.   Hence, we only prove \eqref{eqn 09.03.19:26-2} when $\beta\in\bN$.  Let $\beta\in\bN$, then  by \cite[Lemma 3.8]{kim2020nonlocal}, we have
\begin{align*}
|q_{\alpha,\beta} (t,x)|&\leq C \int_{(\phi(|x|^{-2}))^{-1}}^{2t^{\alpha}} (\phi^{-1}(r^{-1}))^{(d+m)/2} r t^{-\alpha-\beta} dr
\\
&\leq C \int_{(\phi(|x|^{-2}))^{-1}}^{2t^{\alpha}} (\phi^{-1}(r^{-1}))^{(d+m)/2} t^{-\beta} dr.
\end{align*}
For the last inequality, we used $rt^{-\alpha}\leq 2$ whenever $r\leq 2t^{\alpha}$.

(iii)  We follow the proof of \cite[Lemma 3.8]{kim2020nonlocal}.  By \cite[Lemma 3.7]{kim2020nonlocal}, there exist  constants $c,C>0$ depending only on $\alpha,\beta$ such that 
\begin{equation} \label{philarge}
|\varphi_{\alpha,\beta}(t,r)|\leq C t^{-\beta}e^{-c(rt^{-\alpha})^{1/(1-\alpha)}}
\end{equation}
for $rt^{-\alpha}\geq 1$, and 
\begin{eqnarray}\label{betainteger}
|\varphi_{\alpha,\beta}(t,r)|\leq \left\{ \begin{array}{ll}  C rt^{-\alpha-\beta}~&:\, \beta\in \bN \\ C t^{-\beta}~&:\, \beta \notin \bN\end{array} \right.
\end{eqnarray}
for $rt^{-\alpha}\leq 1$. Therefore, we have
\begin{equation} \label{eqn 200824 1516}
\int_0^\infty |\varphi_{\alpha,\beta}(t,r)|dr<\infty.
\end{equation}
Let $x\in \bR^d\setminus\{0\}$. Then for any $r>0$ and  $y\neq 0$ sufficiently close to $x$, we have
$$
 |\phi(\Delta)^\gamma D^{\sigma}p(r,y)| \leq C(\phi,x,d,m,\gamma) , \quad |\sigma|\leq m
$$
due to Lemma \ref{pfracderivativeestimate}. Using \eqref{eqn 200824 1516} and the dominated convergence theorem, we get
\begin{equation*}
\label{eqn 7.20.6}
D^m_x q^\gamma_{\alpha,\beta} (t,x)=\int_0^\infty \phi(\Delta)^\gamma  D^m_x p(r,x)\varphi_{\alpha,\beta}(t,r) dr.
\end{equation*}
Hence, by \eqref{philarge} and \eqref{betainteger} (also recall $rt^{-\alpha}\leq 2$ whenever $r\leq 2t^{\alpha}$),
\begin{align}
|D^m_x q^\gamma_{\alpha,\beta} (t,x)| &\leq C \int_0^{t^\alpha} |\phi(\Delta)^\gamma D^m_x p(r,x)|t^{-\beta} dr   \nonumber
\\
&\quad + C \int_{t^\alpha}^\infty |\phi(\Delta)^\gamma D^m_x p(r,x)| t^{-\beta}e^{-c(rt^{-\alpha})^{1/(1-\alpha)}} dr  \nonumber
\\
& =: I+II.    \label{eqn 4.15.1}
\end{align}
By Lemma \ref{pfracderivativeestimate},

\begin{align*}
I \leq C \int_0^{t^\alpha} t^{-\beta} \frac{\phi(|x|^{-2})^{\gamma}}{|x|^{d+m}}dr
\leq C t^{\alpha-\beta} \frac{\phi(|x|^{-2})^{\gamma}}{|x|^{d+m}}.
\end{align*}
Also, by the change of variables $rt^{-\alpha}\to r$,
\begin{align*}
II&\leq C t^{-\beta} \int_{t^\alpha}^\infty  \frac{\phi(|x|^{-2})^{\gamma}}{|x|^{d+m}} e^{-c(rt^{-\alpha})^{1/(1-\alpha)}} dr
\\
&\leq C t^{\alpha-\beta}\frac{\phi(|x|^{-2})^{\gamma}}{|x|^{d+m}} \int_1^\infty  e^{-cr^{1/(1-\alpha)}} dr
\\
&\leq C t^{\alpha-\beta}\frac{\phi(|x|^{-2})^{\gamma}}{|x|^{d+m}}.
\end{align*}
Hence, \eqref{qgamma whole est} is proved. 

Next we prove \eqref{qgamma partial est}.
  Assume $t^\alpha \phi(|x|^{-2})\geq1$.  Again we consider $I$ and $II$ defined in \eqref{eqn 4.15.1}. For $I$ we have

\begin{align*}
I&=t^{-\beta} \int_0^{(\phi(|x|^{-2}))^{-1}}|\phi(\Delta)^\gamma D^m_x p(r,x)|dr 
\\
&\quad\quad +t^{-\beta}  \int_{(\phi(|x|^{-2}))^{-1}}^{t^\alpha}|\phi(\Delta)^\gamma D^m_x p(r,x)|t^{-\beta} dr 
\\
&=:I_1 + I_2.
\end{align*}
By Lemma \ref{pfracderivativeestimate} (recall that $t^{\alpha}\phi(|x|^{-2})\geq 1$),
\begin{align*}
I_1&\leq C t^{-\beta} \int_0^{(\phi(|x|^{-2}))^{-1}} \frac{\phi(|x|^{-2})^{\gamma}}{|x|^{d+m}}dr\\
&= C t^{-\beta} \frac{\phi(|x|^{-2})^{\gamma-1}}{|x|^{d+m}}
= Ct^{-\beta} \int_{(\phi(|x|^{-2}))^{-1}}^{2(\phi(|x|^{-2}))^{-1}}  \frac{\phi(|x|^{-2})^{\gamma}}{|x|^{d+m}} dr \nonumber
\end{align*}
Note that if $r \leq 2(\phi(|x|^{-2}))^{-1}$, then by \eqref{phiratio}
\begin{equation} \label{21.04.28.14.55}
|x|^{-2} \leq \phi^{-1}(2r^{-1}) \leq  \left(\frac{2}{k_0} \right)^{1/\delta_{0}} \phi^{-1}(r^{-1}).
\end{equation}
Thus, using $r\leq 2(\phi(|x|^{-2}))^{-1}$, we get
\begin{align*}
I_1
&\leq C t^{-\beta}\int_{(\phi(|x|^{-2}))^{-1}}^{2(\phi(|x|^{-2}))^{-1}} (\phi^{-1}(r^{-1}))^{(d+m)/2} r^{-\gamma} dr \nonumber
\\
&\leq Ct^{-\beta}  \int_{(\phi(|x|^{-2}))^{-1}}^{2t^{\alpha}} (\phi^{-1}(r^{-1}))^{(d+m)/2} r^{-\gamma} dr.   \label{eqn 09.05.17:32}
\end{align*}
  We also get, by  
 Lemma \ref{pfracderivativeestimate}, 
\begin{align*}
I_2 
\leq C  t^{-\beta}  \int_{(\phi(|x|^{-2}))^{-1}}^{2t^{\alpha}} (\phi^{-1}(r^{-1}))^{(d+m)/2} r^{-\gamma}dr.
\end{align*}
Thus $I$ is handled. Next we estimate $II$. By \eqref{phiratio}, we find that
$$
\phi^{-1}(r^{-1})\leq t^\alpha r^{-1} \phi^{-1}(t^{-\alpha}), \quad t^\alpha \leq r.
$$
Therefore, by Lemma \ref{pfracderivativeestimate} and the change of variables $rt^{-\alpha}\to r$,
\begin{eqnarray}
II &\leq& Ct^{-\beta} \int_{t^\alpha}^\infty r^{-\gamma}(\phi^{-1}(r^{-1}))^{(d+m)/2} e^{-c(rt^{-\alpha})^{1/(1-\alpha)}} dr   \nonumber
\\
&\leq& Ct^{-\beta} \int_{t^\alpha}^\infty r^{-\gamma}(t^\alpha r^{-1} \phi^{-1}(t^{-\alpha}))^{(d+m)/2} e^{-c(rt^{-\alpha})^{1/(1-\alpha)}} dr  \nonumber
\\
&=& C t^{(1-\gamma)\alpha-\beta} (\phi^{-1}(t^{-\alpha}))^{(d+m)/2} \int_1^\infty r^{-\gamma-(d+m)/2}e^{-cr^{1/(1-\alpha)}} dr  \nonumber
\\
&\leq& C t^{(1-\gamma)\alpha-\beta} (\phi^{-1}(t^{-\alpha}))^{(d+m)/2}. \label{eqn 4.15.7}
\end{eqnarray}
As \eqref{21.04.28.14.55},  if $r \leq 2t^{\alpha}$, then by \eqref{phiratio}
\begin{equation*}
\phi^{-1}(t^{-\alpha})\leq \phi^{-1}(2r^{-1}) \leq  \left(\frac{2}{\kappa_0} \right)^{1/\delta_{0}} \phi^{-1}(r^{-1}).
\end{equation*}
Therefore,
\begin{align*}
t^{(1-\gamma)\alpha-\beta} (\phi^{-1}(t^{-\alpha}))^{(d+m)/2} &\leq C \int_{t^\alpha}^{2t^{\alpha}} (\phi^{-1}(r^{-1}))^{(d+m)/2} r^{-\gamma}t^{-\beta} dr
\\
&\leq C \int_{(\phi(|x|^{-2}))^{-1}}^{2t^{\alpha}} (\phi^{-1}(r^{-1}))^{(d+m)/2} r^{-\gamma}t^{-\beta} dr.
\end{align*}
This and \eqref{eqn 4.15.7} take care of $II$, and consequently  \eqref{qgamma partial est} is proved.

(iv)  See  \cite[Corollary 3.9]{kim2020nonlocal} for \eqref{int of q}.  We prove  \eqref{int of q^gamma}. 
By \eqref{qgamma whole est}, \eqref{qgamma partial est}, Fubini's theorem, and  \eqref{phiint},
\begin{eqnarray*}
\int_{\bR^d} |q^\gamma_{\alpha,\beta}(t,x)|dx &=&\int_{|x|\geq (\phi^{-1}(t^{-\alpha}))^{-\frac{1}{2}}} |q^\gamma_{\alpha,\beta}(t,x)|dx
\\
&&+\int_{|x|< (\phi^{-1}(t^{-\alpha}))^{-\frac{1}{2}}} |q^\gamma_{\alpha,\beta}(t,x)|dx
\\
&\leq& C \int_{|x|\geq (\phi^{-1}(t^{-\alpha}))^{-\frac{1}{2}}} t^{\alpha-\beta}\frac{\phi(|x|^{-2})^\gamma}{|x|^d} dx
\\
&&+ C \int_{|x|< (\phi^{-1}(t^{-\alpha}))^{-\frac{1}{2}}} \int_{(\phi(|x|^{-2}))^{-1}}^{2t^{\alpha}} (\phi^{-1}(r^{-1}))^{d/2} r^{-\gamma}t^{-\beta} dr dx
\\
&\leq& C \int_{r\geq \left( \phi^{-1}(t^{-\alpha}) \right)^{-\frac{1}{2}}} t^{\alpha-\beta} \frac{\phi(r^{-2})^\gamma}{r}dr
\\
&&+ C \int_{0}^{2t^\alpha} \int_{(\phi(|x|^{-2}))^{-1}\leq r} (\phi^{-1}(r^{-1}))^{d/2} r^{-\gamma}t^{-\beta} dx dr
\\
&\leq& C t^{(1-\gamma)\alpha-\beta} + C \int_0^{2t^\alpha} r^{-\gamma}t^{-\beta} dr
\leq C t^{(1-\gamma)\alpha-\beta}.
\end{eqnarray*}

(v) By  (3.24) in \cite{kim2020nonlocal} (or see (34) of \cite{gorenflo2007analytical}),
\begin{align*}
\int_0^\infty e^{-sr}\varphi_{\alpha,\beta}(t,r)dr = t^{\alpha-\beta} E_{\alpha,1-\beta+\alpha}(-st^\alpha).
\end{align*}
Hence, by Fubini's theorem and \eqref{fourierofp} for $\gamma\in (0,1)$,
\begin{align*} 
\cF_d(q^\gamma_{\alpha,\beta})(t,\xi) &= \int_0^\infty \varphi_{\alpha,\beta}(t,r)\left[\int_{\bR^d}e^{-ix\cdot \xi} \phi(\Delta)^\gamma p(r,x)dx \right] dr \nonumber
\\
&=-\int_0^\infty \varphi_{\alpha,\beta}(t,r) \phi(|\xi|^2)^\gamma e^{-r\phi(|\xi|^2)} dr \nonumber
\\
&=-t^{\alpha-\beta} \phi(|\xi|)^\gamma E_{\alpha,1-\beta+\alpha}(-t^\alpha\phi(|\xi|^2))).
\end{align*}
Similarly, we get
\begin{align*} 
\cF_d(q_{\alpha,\beta})(t,\xi) &= \int_0^\infty \varphi_{\alpha,\beta}(t,r)\left[\int_{\bR^d}e^{-ix\cdot \xi} p(r,x)dx \right] dr \nonumber
\\
&=t^{\alpha-\beta} E_{\alpha,1-\beta+\alpha}(-t^\alpha\phi(|\xi|^2)).
\end{align*}
Thus (v) is also proved.
\end{proof}

\end{document}